\documentclass[11pt]{article}
\usepackage{amsmath,amsthm,amssymb}
\usepackage[usenames,dvipsnames]{xcolor}
\usepackage{enumerate}
\usepackage{graphicx}
\usepackage{cite}
\usepackage{comment}
\usepackage{oands}
\usepackage{tikz}
\usepackage{changepage}
\usepackage{bbm}
\usepackage{mathtools}
\usepackage[margin=1in]{geometry}
\usepackage[pagewise,mathlines]{lineno}
\usepackage{appendix}
\usepackage{stmaryrd} 
\usepackage{multicol}
\usepackage{microtype}
\usepackage[colorinlistoftodos]{todonotes}
\usepackage{enumitem}

\usepackage[pdftitle={Connectivity properties of the adjacency graph of SLE$_\kappa$ bubbles for $\kappa \in (4,8)$},
  pdfauthor={Ewain Gwynne and Joshua Pfeffer},
colorlinks=true,linkcolor=NavyBlue,urlcolor=RoyalBlue,citecolor=PineGreen,bookmarks=true,bookmarksopen=true,bookmarksopenlevel=2,unicode=true,linktocpage]{hyperref}

\setcounter{tocdepth}{2}






\theoremstyle{plain}
\newtheorem{thm}{Theorem}[section]
\newtheorem{cor}[thm]{Corollary}
\newtheorem{lem}[thm]{Lemma}
\newtheorem{prop}[thm]{Proposition}

\def\@rst #1 #2other{#1}
\newcommand\MR[1]{\relax\ifhmode\unskip\spacefactor3000 \space\fi
  \MRhref{\expandafter\@rst #1 other}{#1}}
\newcommand{\MRhref}[2]{\href{http://www.ams.org/mathscinet-getitem?mr=#1}{MR#2}}


\theoremstyle{definition}
\newtheorem{defn}[thm]{Definition}
\newtheorem{example}[thm]{Example}
\newtheorem{remark}[thm]{Remark}
\newtheorem*{claim}{Claim}

\newtheorem{notation}[thm]{Notation}

\numberwithin{equation}{section}

\newcommand{\dsb}{\begin{adjustwidth}{2.5em}{0pt}
\begin{footnotesize}}
\newcommand{\dse}{\end{footnotesize}
\end{adjustwidth}}

\newcommand{\ssb}{\begin{adjustwidth}{2.5em}{0pt}}
\newcommand{\sse}{\end{adjustwidth}}

\newcommand{\aryb}{\begin{eqnarray*}}
\newcommand{\arye}{\end{eqnarray*}}
\def\alb#1\ale{\begin{align*}#1\end{align*}}
\def\allb#1\alle{\begin{align}#1\end{align}}
\newcommand{\eqb}{\begin{equation}}
\newcommand{\eqe}{\end{equation}}
\newcommand{\eqbn}{\begin{equation*}}
\newcommand{\eqen}{\end{equation*}}

\newcommand{\BB}{\mathbbm}
\newcommand{\ol}{\overline}

\newcommand{\op}{\operatorname}

\newcommand{\eqD}{\overset{d}{=}}
\newcommand{\ep}{\epsilon}
\newcommand{\rta}{\rightarrow}

\newcommand{\wt}{\widetilde}
\newcommand{\wh}{\widehat} 
\newcommand{\mcl}{\mathcal}

\newcommand{\bdy}{\partial}


\let\originalleft\left
\let\originalright\right
\renewcommand{\left}{\mathopen{}\mathclose\bgroup\originalleft}
\renewcommand{\right}{\aftergroup\egroup\originalright}

\title{Connectivity properties of the adjacency graph\\ of SLE$_\kappa$ bubbles for $\kappa \in (4,8)$}

\date{  }
\author{
\begin{tabular}{c} Ewain Gwynne\\[-5pt]\small MIT \end{tabular}
\begin{tabular}{c} Joshua Pfeffer\\[-5pt]\small Cambridge \end{tabular} 
}

\begin{document}

\maketitle

\begin{abstract}  
We study the adjacency graph of bubbles---i.e., complementary connected components---of an SLE$_{\kappa}$ curve for $\kappa \in (4,8)$, with two such bubbles considered to be adjacent if their boundaries intersect. We show that this adjacency graph is a.s.\ connected for $\kappa \in  (4,\kappa_0]$, where $\kappa_0 \approx 5.6158$ is defined explicitly. This gives a partial answer to a problem posed by Duplantier, Miller and Sheffield (2014).
Our proof in fact yields a stronger connectivity result for $\kappa  \in (4,\kappa_0]$, which says that there is a Markovian way of finding a path from any fixed bubble to $\infty$. We also show that there is a (non-explicit) $\kappa_1 \in (\kappa_0, 8)$ such that this stronger condition does not hold for $\kappa \in [\kappa_1,8)$.

Our proofs are based on an encoding of SLE$_\kappa$ in terms of a pair of independent $\kappa/4$-stable processes, which allows us to reduce our problem to a problem about stable processes. In fact, due to this encoding, our results can be re-phrased as statements about the connectivity of the adjacency graph of loops when one glues together an independent pair of so-called $\kappa/4$-stable looptrees, as studied, e.g., by Curien and Kortchemski (2014). 

The above encoding comes from the theory of Liouville quantum gravity (LQG), but the paper can be read without any knowledge of LQG if one takes the encoding as a black box. 
\end{abstract}


\tableofcontents

\section{Introduction}

\subsection{Overview}

Let $\kappa \in (4,8)$ and let $\eta$ be a chordal Schramm-Loewner evolution (SLE$_\kappa$) curve~\cite{schramm0}, say from 0 to $\infty$ in the upper half-plane $\BB H$.
A \textit{bubble} of $\eta$ is a connected component of $\BB H\setminus \eta$. We declare that two such bubbles are \emph{adjacent} if their boundaries have non-empty intersection. In this paper we will study the adjacency graph of SLE$_\kappa$ bubbles for $\kappa \in (4,8)$. (The analogous graph for $\kappa \in (0,4] \cup [8,\infty)$ is uninteresting since SLE$_\kappa$ has only two complementary connected components for $\kappa \in (0,4]$ and is space-filling for $\kappa \geq 8$~\cite{schramm-sle}).

A natural first question to ask about the adjacency graph of bubbles is whether it is connected, i.e., whether any two bubbles can be joined by a finite path in the graph. 
This question appears as~\cite[Question 11.2]{wedges} and is the SLE analogue of a well-known open problem for Brownian motion, which asks whether the adjacency graph of complementary connected components of a planar Brownian motion (say, stopped at some fixed time) is connected; see, e.g.,~\cite{burdzy-website} or~\cite[Open Problem (4)]{peres-bm}. 

Intuitively, one expects that it is easier for the adjacency graph to be connected when $\kappa$ is closer to 4, since for smaller $\kappa$ the bubbles tend to be larger and the curve itself is ``thinner", e.g., in the sense that it has smaller Hausdorff dimension~\cite{beffara-dim} and a larger set of cut points~\cite{miller-wu-dim}.
 
However, due to the fractal nature of the SLE$_\kappa$ curve, it is not clear a priori whether the adjacency graph should be connected for \emph{any} value of $\kappa \in (4,8)$, even at a heuristic level. 
For instance, the set $S$ of points on the curve which do not lie on the boundary of any bubble has full Hausdorff dimension: indeed, by SLE duality~\cite{zhan-duality1,zhan-duality2,dubedat-duality,ig1,ig4}, the dimension of the boundary of each bubble is equal to the dimension of SLE$_{16/\kappa}$, which is strictly less than the dimension of SLE$_{\kappa}$~\cite{beffara-dim}.
If $S$ contained a non-trivial connected subset, then no path of bubbles in the adjacency graph would be able to cross this subset (c.f.\ Corollary~\ref{cor-dust}). 
One could also worry that there exist pairs of macroscopic bubbles separated by an infinite ``cloud" of small bubbles, so that no finite path of bubbles can join them.
 Figure~\ref{fig-sle-sim} shows a simulation of an SLE curve, which may help the reader to visualize these geometric features.

\begin{figure}
\centering
\includegraphics[width=0.5\textwidth]{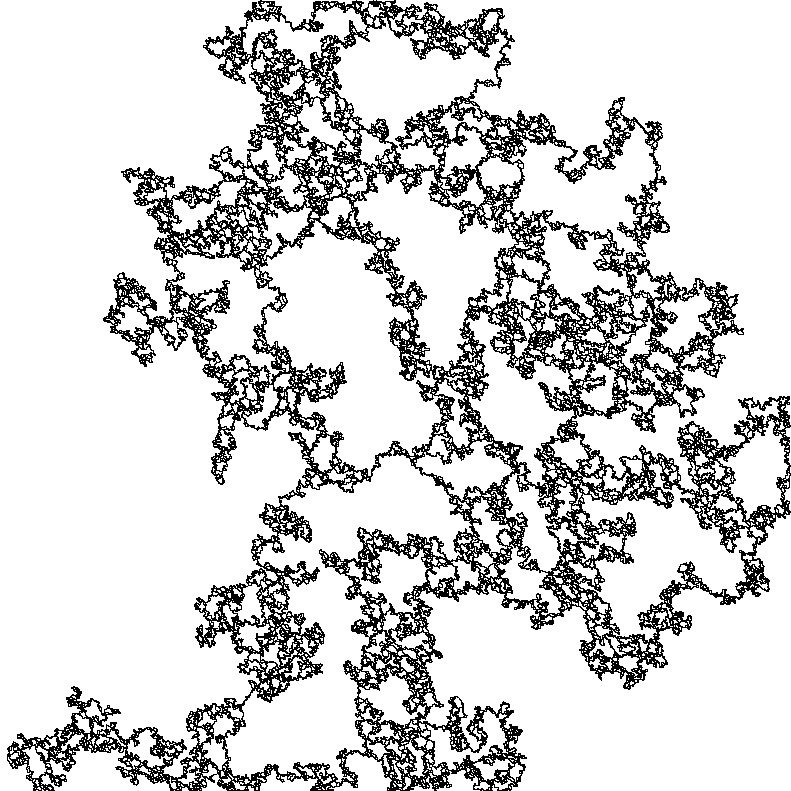}
\caption{An SLE$_6$ in a square domain. Simulation by Jason Miller.} \label{fig-sle-sim}
\end{figure}

In this paper we will give an affimative answer to the above question for an explicit range of values of $\kappa$. With $\psi(x) = \frac{\Gamma'(x)}{\Gamma(x)}$ denoting the digamma function, we have the following. 

\begin{thm}
For each fixed $\kappa \in (4,\kappa_0]$, the adjacency graph of bubbles of a chordal SLE$_\kappa$ curve is almost surely connected, where $\kappa_0 \approx 5.6158$ is the unique solution of the equation $\pi \cot(\pi \kappa/4) + \psi(2-\kappa/4) - \psi(1) = 0$ on the interval $(4,8)$.
\label{main}
\end{thm}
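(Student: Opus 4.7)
The plan is to invoke the mating-of-trees / peanosphere encoding of SLE$_\kappa$ in terms of a pair $(L,R)$ of independent totally asymmetric $\kappa/4$-stable processes, taken as a black box. Under this encoding each bubble corresponds to a jump of one of the two processes, and the adjacency relation between bubbles translates into an explicit, checkable condition on the relative positions of the jumps and on the excursions they span (the loops of the associated stable looptrees). The first technical step is to write this translation down carefully and identify precisely which jumps of $(L,R)$ encode the boundary-sharing of two bubbles.

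Next I would prove the stronger ``Markovian path to $\infty$'' statement, which immediately yields connectivity of the adjacency graph: starting from any bubble, concatenating two such outgoing paths through $\infty$ joins any two bubbles. The construction is a Markov chain on bubbles which, given the current bubble $B$, selects a canonical adjacent bubble on the ``$\infty$-side'' of $B$ — for instance, the adjacent bubble whose corresponding jump time is latest, or whose corresponding loop in the looptree lies in the direction of $\infty$. The scale-invariance and regenerative structure of the stable processes at the relevant stopping times should make the rescaled chain statistically identical to the initial configuration, so the chain is genuinely Markov and its evolution is governed by a renewal-type recursion for the log-scale of the current bubble.

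The threshold $\kappa_0$ will emerge as the critical parameter at which the expected logarithmic increment of bubble scale under this chain changes sign; once this drift is nonnegative the chain almost surely escapes to $\infty$ by a law-of-large-numbers argument. The specific form $\pi\cot(\pi\kappa/4)+\psi(2-\kappa/4)-\psi(1)$ strongly suggests a Mellin-transform computation against the one-sided $\kappa/4$-stable Lévy measure: the $\pi\cot(\pi\kappa/4)$ piece is characteristic of residue sums produced by the reflection formula $\Gamma(s)\Gamma(1-s)=\pi/\sin(\pi s)$, while the difference $\psi(2-\kappa/4)-\psi(1)$ is the kind of object one gets by differentiating a quotient of Gamma functions at an integer argument, as is typical for expected logarithms of $\alpha$-stable functionals.

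The main obstacle will be this explicit drift computation: one must pinpoint the right functional of the jumps of $(L,R)$ whose expected logarithm governs the chain, evaluate it in closed form, and verify that its sign flip occurs exactly at the stated $\kappa_0$. A secondary difficulty is that from any given bubble there are typically infinitely many adjacent bubbles, with sizes accumulating at zero, so the choice of ``next bubble'' must be made carefully enough that the drift integral converges and defines a bona fide Markov chain — this is likely where the argument becomes delicate, and where the restriction $\kappa \leq \kappa_0$ (rather than some larger range) is genuinely used.
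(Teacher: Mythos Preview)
Your high-level strategy matches the paper's: encode via the pair $(L,R)$ of independent $\kappa/4$-stable processes, construct a Markov chain of adjacent bubbles whose log-scale increments are i.i.d., and show the drift is nonnegative precisely for $\kappa\in(4,\kappa_0]$. But several of the steps you describe would not work as stated, and you are missing the main technical idea.

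First, the connectivity reduction. You cannot simply ``concatenate two outgoing paths through $\infty$'': two infinite paths of bubbles need not share a bubble. The paper instead observes that any global cut point of $\eta$ lies on exactly two bubbles whose union disconnects all earlier bubbles from all later ones, so any Markovian path to $\infty$ is forced through one of these two. This is what converts the existence of paths to $\infty$ into connectivity.

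Second, and more seriously, the drift computation is not a Mellin transform against the L\'evy measure. The chain is defined concretely: if the current bubble is formed at time $\tau_k$, let $\sigma_k$ be the earliest time $s<\tau_k$ with $L_r\ge L_s$ and $R_r\ge R_s$ for all $r\in[s,\tau_k)$; the next bubble is the one formed when the \emph{other} coordinate first drops below its value at $\sigma_k$. The scale ratio is then $X_{k+1}/X_k \stackrel{d}{=} L_\tau - L_\sigma$, where $\tau$ is the first time $R$ drops below $-1$. The difficulty you do not anticipate is that $\tau$ is a hitting time for $R$, not $L$, and $\sigma$ depends on both, so $L_\tau-L_\sigma$ is not directly accessible. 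The paper's key step is to prove that $L_\tau-L_\sigma$ stochastically dominates $R_{\tau^-}-R_\sigma$ (via a meander description of the conditional law on $[\sigma,\tau)$ and a Harris-type correlation inequality), and then compute $\mathbb{E}\log(R_{\tau^-}-R_\sigma)$ instead.

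Third, the formula does not arise as a single residue sum. It splits as $\mathbb{E}\log(R_{\tau^-}-R_\xi)+\mathbb{E}\log R_{\theta_1}$, where $\xi$ is the time of the minimum of $R$ on $[0,\tau)$ and $\theta_1$ is the last simultaneous running supremum of $(L,R)$ before $R$ hits level $1$; this splitting comes from a time-reversal description of $(L,R)$ on $[\xi,\tau)$. The first expectation is $\pi\cot(\pi\kappa/4)$, read off from the explicit undershoot density of Doney--Kyprianou. The second is $\psi(2-\kappa/4)-\psi(1)$, obtained by passing to the correlated-Brownian-motion representation of $(L,R)$, identifying the set of simultaneous running suprema with the range of a $(2-\kappa/4)$-stable subordinator via cone-time dimension results, and applying the arcsine law. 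None of this is visible from the L\'evy measure alone.
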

 
We will prove Theorem \ref{main} by proving an stronger condition  (Theorem \ref{path}), which, roughly  speaking, asserts that each bubble of the SLE$_\kappa$ curve is ``connected to infinity'' via an infinite path of bubbles in the adjacency graph which are chosen in a Markovian manner with respect to a natural parametrization of SLE that we introduce in Section~\ref{sec-levy}.  We also show that this stronger condition fails for $\kappa$ sufficiently close to $8$ (Theorem \ref{largekappa}).  
See Section~\ref{sec-open} for some heuristic discussion concerning the values of $\kappa$ for which various connectivity properties hold.

As alluded to earlier, Theorem~\ref{main} tells us that for $\kappa \in (4,\kappa_0]$, there cannot be non-trivial connected subsets of the SLE$_\kappa$ curve which do not intersect the boundary of any bubble. 

\begin{cor} \label{cor-dust}
For $\kappa \in (4,\kappa_0]$, the set of points on a chordal SLE$_\kappa$ curve which do not lie on the boundary of any bubble is almost surely totally disconnected.
\end{cor}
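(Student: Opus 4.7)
The plan is to deduce the corollary from Theorem~\ref{main} by contradiction, working on the a.s.\ event that the bubble adjacency graph is connected. The key observation driving the argument is that $S$ is disjoint from every bubble closure: any point $z \in \ol{B}\cap \eta$ must lie in $\partial B$, whereas $S$ consists precisely of those points of $\eta \cap \BB H$ lying on no bubble boundary. Consequently any finite union of bubble closures is disjoint from $S$, and in particular from any connected subset of $S$.

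Suppose for contradiction that $S$ contains a connected set $C$ with at least two points. The first step I would carry out is to produce a pair of bubbles $B_1, B_2$ lying in distinct connected components of $\BB H \setminus C$. Given such a pair, Theorem~\ref{main} yields a finite chain of adjacent bubbles $B_1 = B^{(0)}, B^{(1)}, \ldots, B^{(n)} = B_2$, and the union $K = \bigcup_{i=0}^{n} \ol{B^{(i)}}$ is a connected subset of $\ol{\BB H}$ meeting both $B_1$ and $B_2$ (since consecutive closures share a boundary point). By the observation above, $K$ is disjoint from $S$ and hence from $C$, so $K$ is a connected subset of $\ol{\BB H}\setminus C$ that contains points from two distinct connected components of $\BB H \setminus C$, a contradiction.

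The main obstacle is the first step: producing the separating pair. I expect this to exploit the two-sided local structure of $\eta$ for $\kappa \in (4,8)$ coming from SLE duality, by which the left and right boundaries of $\eta([0,t])$ are SLE$_{16/\kappa}$-type curves and bubbles accumulate from both sides of $\eta$. Intuitively, $C$ is a non-degenerate continuum lying inside the ``fractal skeleton'' of the curve, and bubbles attached to the curve on opposite sides of $C$ cannot be joined inside $\BB H \setminus C$. Making this rigorous---for instance, by choosing an interior point $z$ of $C$ and a small disk $D\subset \BB H$ around $z$ such that $C\cap D$ is a continuum meeting $\partial D$ and separates $D$, then picking $B_1, B_2$ from the two families of bubbles accumulating on $C$ from the two sides of $\eta$, and ruling out a global bypass of $C$ through $\BB H\setminus D$ by iterating the local argument along $C$---is the technical heart of the proof.
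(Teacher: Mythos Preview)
Your core mechanism is right: a finite chain of adjacent bubbles has connected closure disjoint from $S$, so it cannot link bubbles lying in different components of $\BB H\setminus C$. But the step you flag as the ``main obstacle'' is a genuine gap, and your sketched fix does not close it. A non-degenerate continuum $C\subset S$ need not disconnect $\BB H$ at all (think of an arc in the interior), so there may be no pair $B_1,B_2$ in distinct components of $\BB H\setminus C$. Your local argument (pick a small disk $D$ around $z\in C$ where $C$ separates $D$) does not propagate to a global separation statement: bubble chains can and do bypass any compact obstruction by going far away, so the contradiction you want never materializes in $\BB H$ itself.

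The paper handles exactly this by \emph{localizing the curve}, not just the point. Using reversibility and the domain Markov property, for any forward stopping time $\tau_1$ and reverse stopping time $\tau_2$ with $\tau_1<\tau_2$, the law of $\eta|_{[\tau_1,\tau_2]}$ given the rest is chordal SLE$_\kappa$ in the appropriate complementary component $D=D(\tau_1,\tau_2)$. Theorem~\ref{main} therefore applies to $\eta|_{[\tau_1,\tau_2]}$ in $D$. Running over a countable dense family of pairs $(\tau_1,\tau_2)$, and choosing $\tau_1$ (resp.\ $\tau_2$) close to the first (resp.\ last) time $\eta$ hits the putative continuum $X$, one arranges that $X$ disconnects the interior of $D$; hence $X$ would disconnect the adjacency graph of bubbles of $\eta|_{[\tau_1,\tau_2]}$, contradicting Theorem~\ref{main}. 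The point is that in the small domain $D$ there is no room for a chain of bubbles to bypass $X$, which is precisely the difficulty your global argument cannot overcome. Replacing your ``local disk plus duality'' sketch with this stopping-time localization is what is needed to complete the proof.
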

\begin{proof}
Let $\eta$ be a chordal SLE$_{\kappa}$ curve and let $\tau_1$ and $\tau_2$ be forward and reverse stopping times of $\eta$, respectively, with $\tau_1 < \tau_2$ almost surely.  By the reversibility of SLE$_{\kappa}$~\cite{ig3} and the domain Markov property, the conditional law of $\left. \eta \right|_{[\tau_1,\tau_2]}$ conditioned on $\left. \eta \right|_{[0,\tau_1] \cup [\tau_2,\infty)}$ is that of an SLE$_{\kappa}$ curve from $\eta(\tau_1)$ to $\eta(\tau_2)$ in the appropriate connected component $D = D(\tau_1,\tau_2)$ of $\BB H\setminus \eta ([0,\tau_1] \cup [\tau_2,\infty))$.    Theorem \ref{main} applied to this latter SLE curve implies that, almost surely, there does not exist a connected subset of $\eta$ which does not intersect the boundary of any bubble of $\eta$ and which disconnects the interior of $D$, since such a set would disconnect the adjacency graph of bubbles of $\left. \eta \right|_{[\tau_1,\tau_2]}$.  

We can choose a countable collection $\mcl T$ of random pairs of times $(\tau_1,\tau_2)$ such that $\tau_1 < \tau_2$ a.s., $\tau_1$ (resp.\ $\tau_2$) is a forward (resp.\ reverse) stopping time for $\eta$, and the projection of $\mcl T$ onto its first and second coordinates are each dense (e.g., we could conformally map to $\BB D$, parametrize $\eta$ by Minkowski content~\cite{lawler-shef-nat,lawler-zhou-nat,lawler-rezai-nat}, then let $\mcl T$ be the set of pairs of ordered positive rational times). 
If $X$ is a connected subset of $\eta$ with more than one point and we choose $(\tau_1,\tau_2) \in \mcl T$ such that $\tau_1$ (resp.\ $\tau_2$) is sufficiently close to the first (resp.\ last) time that $\eta$ hits $X$, then $X$ will disconnect the interior of the domain $D$ above. Hence the corollary follows from a union bound over all $(\tau_1,\tau_2) \in \mcl T$.  
\end{proof}  

We also mention the recent related work~\cite{aru-sepulveda-2valued}, which studies the \emph{two-valued local sets} of the Gaussian free field---a two-parameter family of random sets constructed from collections of SLE$_4$-type curves. Among other things, the authors determine the parameter values for which the adjacency graph of complementary connected components of these sets are connected, using very different techniques from those of the present paper.

\subsection{Approach and outline}
\label{sec-outline}
 
The key tool in our proof is a pair of independent $\kappa/4$-stable processes $(L,R)$ with only downward jumps, first introduced in~\cite[Corollary 1.19]{wedges}, which encode the geometry of the SLE$_{\kappa}$ curve. 
The existence of these processes reduces our problem to analyzing stable processes rather than SLE$_\kappa$. 
The particular stable processes we consider are characterized by the Laplace transform $\BB E[e^{\lambda L_t}] = \BB E[ e^{\lambda R_t}] = e^{a t \lambda^{\kappa/4}}$, $\forall t,\lambda >0$ or equivalently by the L\'evy measure $b |x|^{-\kappa/4} \BB 1_{(x\leq 0)} \,dx$ for constants $a,b> 0$ which we do not make explicit (see Remark~\ref{remark-scaling}). We refer to~\cite{bertoin-book} for more on stable processes.

We will give the definition of $(L,R)$ in Section~\ref{sec-LRdef}. The definition uses the theory of Liouville quantum gravity (LQG): roughly speaking, $L_t$ (resp.\ $R_t$) for $t\geq 0$ gives the LQG length of the left (resp.\ right) outer boundary of $\eta([0,t])$ minus the LQG length of the interval to the left (resp.\ right) of 0 which is disconnected from $\infty$ by $\eta([0,t])$, when $\eta$ is parametrized by quantum natural time with respect to a certain GFF-type distribution.
The downward jumps of $L$ and $R$ correspond to times at which $\eta$ forms bubbles.
We will review the aspects of LQG theory which are necessary to understand the definition in Section~\ref{sec-lqg}. The reader who is not familiar with LQG can take the existence of $(L,R)$ as a black box throughout the rest of the paper. 

In Section~\ref{sec-markovpath} we use the process $(L,R)$ to formulate a condition for the adjacency graph of SLE$_\kappa$ bubbles which implies connectedness. We will then state Theorems \ref{path} and \ref{largekappa}, which assert that this stronger condition holds for the range of $\kappa$ considered in Theorem \ref{main}, but fails for $\kappa$ sufficiently close to 8.  The remaining sections of the paper will be devoted to proving Theorems \ref{path} and \ref{largekappa}. 

In Section~\ref{sec-reducing}, we explain how to use the Markov and scaling properties of $(L,R)$ to reduce each of Theorems~\ref{path} and~\ref{largekappa} to determining whether the expected logarithm of a certain quantity defined in terms of $(L,R)$ is positive or negative. The remainder of the paper contains the (somewhat tricky) L\'evy process arguments needed to estimate these expectations. Theorem~\ref{path} (which implies Theorem~\ref{main}) is proven in Section~\ref{sec-onebubble} and Theorem~\ref{largekappa} is proven in Section~\ref{sec-converseproof}. In the proofs, we will use several existing results from the L\'evy process literature, including ones from~\cite{chaumont-doney-local-times,chaumont-decomp,bertoin-savov-duality,dk-overshoot,paolella,peskir}. However, since we are interested in certain rather specific times for a pair of independent L\'evy processes, we will also need to prove a number of L\'evy process results by hand. See also Remark~\ref{remark-literature}.

Section~\ref{sec-open} discusses some open problems related to various connectivity properties of the adjacency graph of SLE bubbles.

\subsection{Looptree interpretation}
\label{sec-looptree}
 
Due to the encoding discussed in Section~\ref{sec-outline}, Theorem~\ref{main} can be re-phrased as a statement about the topological space obtained by gluing together a pair of so-called \emph{$\kappa/4$-stable looptrees}, as studied, e.g., in~\cite{curien-kortchemski-looptree-def}. 
We will not directly use looptrees in our proof, so a reader who only wants to see the proof of our results for SLE$_\kappa$ can safely skip this subsection. 

Stable looptrees are obtained from stable L\'evy trees (as defined, e.g., in~\cite{duquesne-legall-levy-trees}) by replacing each branch point (corresponding to the jumps of the L\'evy process) by a circle of perimeter equal to the magnitude of the jump.
In the case of $\kappa/2$-stable processes, this construction is equivalent to the construction of the so-called \emph{forested wedge of weight $\gamma^2-2$} (here $\gamma = 4/\sqrt\kappa$) in~\cite[Figure 1.15, Line 3]{wedges}, except that in the looptree definition the interiors of the disks are not included.
The definition of looptrees/forested wedges is explained in Figure~\ref{fig-looptree}.

\begin{figure}[ht!]
\begin{tabular}{ccc}
\includegraphics[width=0.32\textwidth]{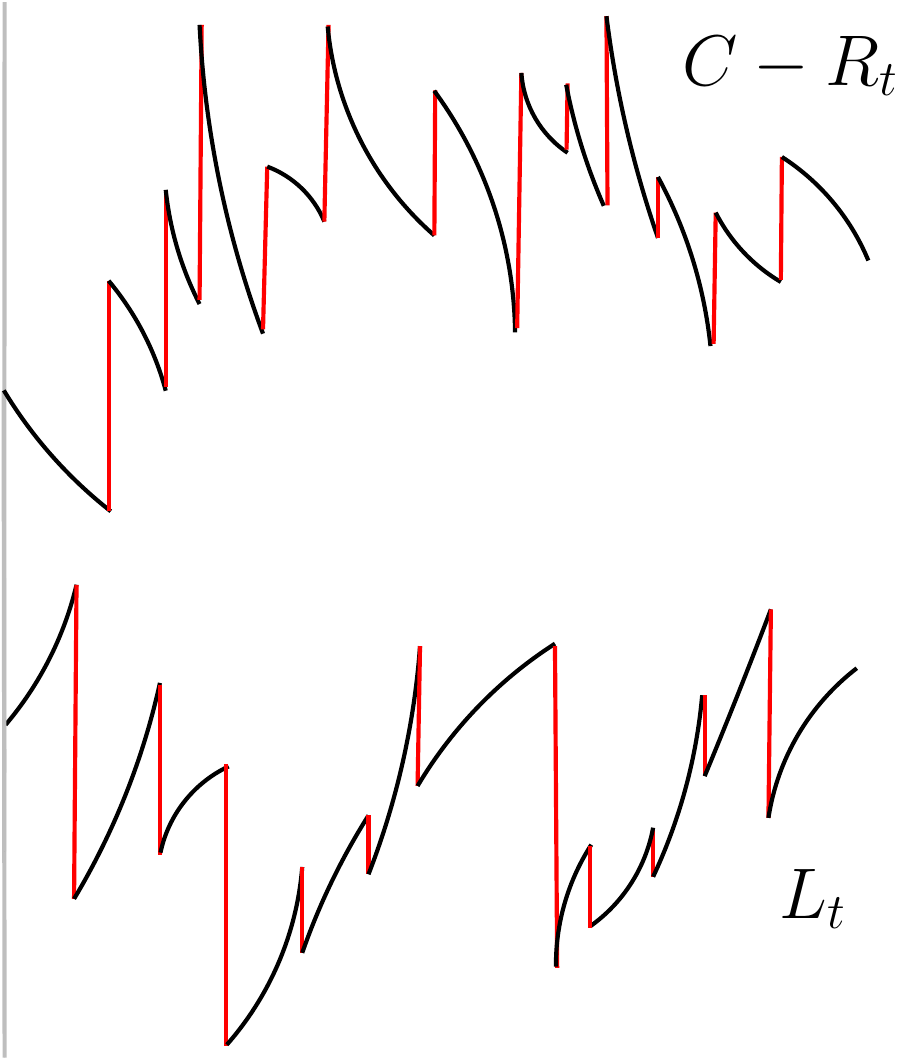}
&
\includegraphics[width=0.32\textwidth]{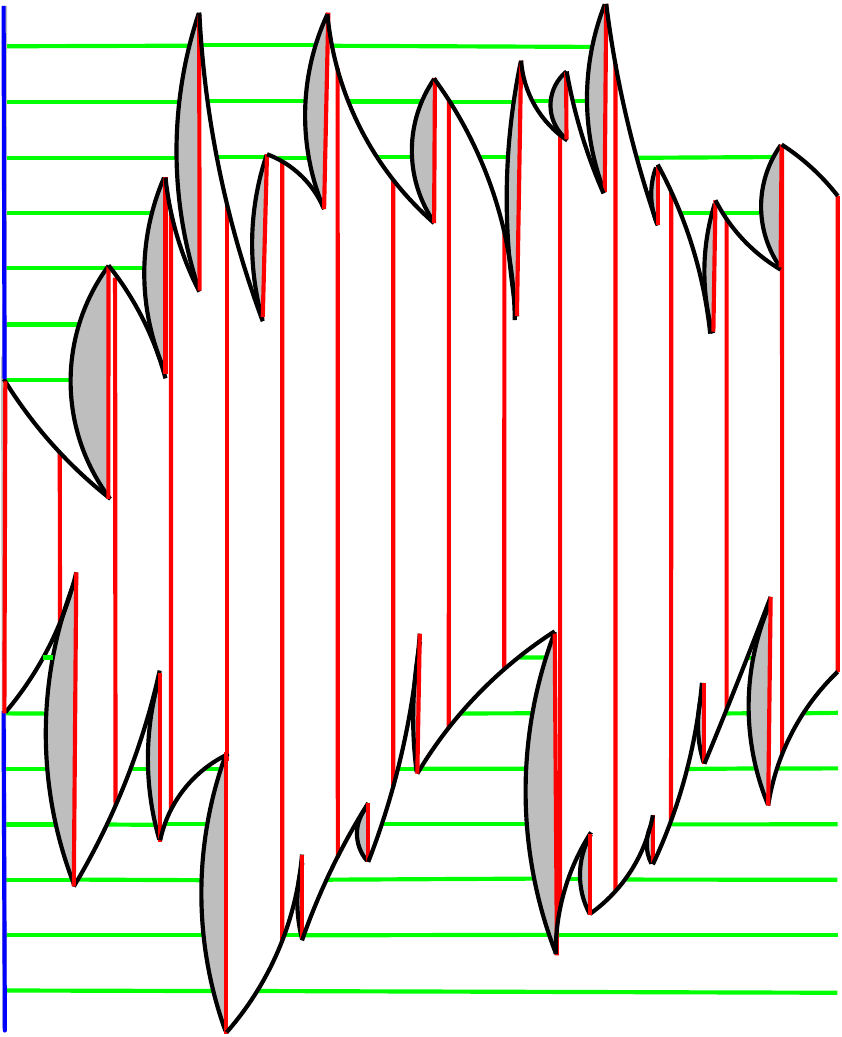}
&
\includegraphics[width=0.36\textwidth]{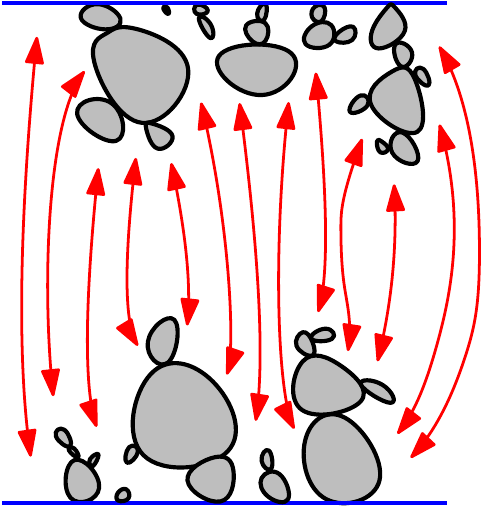}
\end{tabular}
\caption{An illustration of the gluing of two independent $\kappa/4$-stable looptrees described in Corollary \ref{cor-looptree}.  \textbf{Left:} We begin with a pair $(L,R)$ of independent $\kappa/4$-stable processes with only negative jumps.  We can choose a large $C>0$ such that the graphs of $L_t$ and $C-R_t$ do not intersect in some time interval of interest (the particular value of $C$ is unimportant). \textbf{Middle:} For each jump of $L_t$, we draw a black curve underneath the graph of $L$ with the same endpoints as those of the jump, and which intersects each horizontal line only once. The particular geometry of the curves chosen will not affect the topology of the resulting tree. We similarly draw curves corresponding to jumps of $C - R_t$. We then identify pairs of points of the square if they lie on the same horizontal (green) segment that lies below the curve; and similarly for $C-R_t$. This produces a pair of independent forested wedges of weight $\gamma^2-2$. To glue the two forested wedges, we draw vertical (red) segments joining the two graphs, and we connect points on the two graphs that lie on the same vertical segment or on the same jump segment.  \textbf{Right:} The resulting quotient is a pair of forested wedges with outer boundaries identified.  The parts of the forested wedges colored in blue correspond to running minima of $L_t$ and $C - R_t$; or, equivalently, points of $L_t$ and $R_t$ which lie on horizontal green segments that intersect the rays $(-\infty,0)$ and $(C,\infty)$ on the $y$-axis, colored in blue in the middle figure. If we remove the gray interior regions, we obtain a pair of $\kappa/4$-stable looptrees with their outer boundaries identified. We emphasize that the looptrees shown in the right panel are \emph{not} exactly the ones produced from the stable processes in the left and middle panels.  
}
\label{fig-looptree}
\end{figure}

\begin{cor} \label{cor-looptree}
Let $(L,R)$ be a pair of i.i.d.\ $\kappa/4$-stable processes with only downward jumps and let $\mcl G$ be the topological space obtained by gluing the looptrees $\mcl T^L$ and $
\mcl T^R$ associated with $L$ and $R$ together according to the natural length measure along their boundaries which arises from the time parametrizations of $L$ and $R$, as described in Figure~\ref{fig-looptree}.  If $\ell_1$ and $\ell_2$ are two loops, each of which belongs to either $\mcl T^L$ or $\mcl T^R$, we declare that they are adjacent if and only if the corresponding subsets of $\mcl G$ (under the quotient map $\mcl T^L \sqcup \mcl T^R \rta\mcl G$) intersect. If $\kappa \in (4,\kappa_0]$, then the adjacency graph of loops is a.s.\ connected.
\end{cor}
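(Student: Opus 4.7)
The plan is to reduce the statement to Theorem~\ref{main} via the SLE$_\kappa$/stable process encoding referenced in Section~\ref{sec-outline}. By~\cite[Corollary 1.19]{wedges}, one can couple the pair $(L,R)$ with an SLE$_\kappa$ curve $\eta$ on an appropriate LQG surface in such a way that the downward jumps of $L$ and $R$ are in natural bijection with the bubbles of $\eta$ lying on the left and right sides of $\eta$, respectively, with jump sizes equal to the LQG boundary lengths of the corresponding bubbles. The first step is to set up this bijection carefully and observe that, by construction, the topological quotient $\mcl G$ described in Figure~\ref{fig-looptree} is homeomorphic to the union $\bigcup_B \partial B$ of bubble boundaries of $\eta$, once the parametrizations by $L$ and $R$ of the left and right outer boundaries of $\eta([0,t])$ are compared with the quotient identifications in Figure~\ref{fig-looptree} (taking into account that the running minima of $L$ and $R$ correspond to the intervals of $\partial \BB H$ disconnected from $\infty$ by $\eta$, as explained in Section~\ref{sec-LRdef}).

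The second step is to match the adjacency relations. A loop of $\mcl T^L$ (resp.\ $\mcl T^R$) corresponds to a jump of $L$ (resp.\ $R$), and hence, via the coupling, to a bubble $B$ of $\eta$, with the loop parametrized by LQG boundary length of $\partial B$. Two loops are adjacent in $\mcl G$ if and only if they share an identified point; the horizontal (green) identifications under the graph of $L$ (resp.\ $C-R$) encode meetings of two bubbles on the same side of $\eta$ at points of the SLE trace (via the looptree/L\'evy tree structure of that side), while the vertical (red) identifications between the graphs of $L$ and $C-R$ encode meetings of bubbles on opposite sides via points on the outer boundary of $\eta([0,t])$ or on $\partial \BB H$. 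In either case, two loops are adjacent in $\mcl G$ if and only if the two corresponding bubbles $B_1, B_2$ of $\eta$ satisfy $\partial B_1 \cap \partial B_2 \neq \emptyset$, i.e., they are adjacent in the sense of Theorem~\ref{main}.

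Given these identifications, the corollary follows at once from Theorem~\ref{main}: the adjacency graph of bubbles of $\eta$ is a.s.\ connected for $\kappa \in (4,\kappa_0]$, and hence so is the isomorphic adjacency graph of loops of $\mcl G$.

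The main obstacle lies in the second step, namely verifying that the quotient map $\mcl T^L \sqcup \mcl T^R \rta \mcl G$ introduces no spurious loop-adjacencies and misses no true bubble-adjacencies. A delicate case is when two bubbles meet at a ``pinch point'' of the SLE trace (where $\eta$ is locally close to forming a bubble but has not yet closed it off), or at a point on $\partial \BB H$ already disconnected from $\infty$; in both cases one must track which identification (horizontal versus vertical) in Figure~\ref{fig-looptree} captures the meeting, using the precise relationship between $(L,R)$ and the left/right boundary lengths of $\eta$ stated in Section~\ref{sec-LRdef}. Once this bookkeeping is carried out, the correspondence of adjacency graphs is a matter of unwinding definitions.
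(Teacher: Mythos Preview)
Your overall strategy---reduce to Theorem~\ref{main} via the coupling of~\cite[Corollary~1.19]{wedges}---is the same as the paper's, but the paper executes it more cleanly and sidesteps exactly the ``main obstacle'' you flag. Rather than matching adjacencies by tracking horizontal versus vertical identifications in Figure~\ref{fig-looptree}, the paper fills in each loop of $\mcl G$ with a disk to obtain a space $\mcl G^\bullet$, and then invokes the full content of~\cite[Corollary~1.19]{wedges} (see also~\cite[Figure~1.19]{wedges}): there is a homeomorphism $\BB H \to \mcl G^\bullet$ taking the range of $\eta$ to $\mcl G \subset \mcl G^\bullet$. Under a homeomorphism, the complementary connected components of $\eta$ in $\BB H$ (the bubbles) correspond to the complementary connected components of $\mcl G$ in $\mcl G^\bullet$ (the open disks), and boundaries go to boundaries. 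Hence $\partial b_1 \cap \partial b_2 \neq \emptyset$ for two bubbles if and only if the corresponding loops intersect in $\mcl G$, with no case analysis needed.

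One small imprecision in your write-up: $\mcl G$ is \emph{not} homeomorphic to $\bigcup_B \partial B$ but rather to the full range of $\eta$. These differ, since (as noted in the introduction) there are points of $\eta$ not on the boundary of any bubble; this ``dust'' has full Hausdorff dimension. This does not break your argument---what you need is only that loops correspond to bubble boundaries and that intersections are preserved---but it is worth stating correctly, and it is another reason the paper's filled-in space $\mcl G^\bullet$ is the more natural object to work with.
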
 
\begin{proof}
Let $\mcl G^\bullet$ be the topological space obtained by filling in each of the loops of $\mcl G$ with a copy of the unit disk. Equivalently, $\mcl G^\bullet$ can be obtained by replacing each of the loops of $\mcl T^L$ and $\mcl T^R$ with a closed disk, then identifying the resulting trees of disks along their boundaries as we identified $\mcl T^L$ and $\mcl T^R$ to produce $\mcl G$. We note that $\mcl G$ is canonically identified with a closed subset of $\mcl G^\bullet$, namely the image of the boundaries of the trees of disks under the quotient map.
Let $\eta$ be an SLE$_\kappa$ curve. By a slight abuse of notation, we also denote the range of $\eta$ by $\eta$.  It follows from~\cite[Corollary~1.19]{wedges} (see also~\cite[Figure 1.19]{wedges}) that there is a homeomorphism $\BB H\rta \mcl G^\bullet$ which takes $\eta$ to $\mcl G$. 
Here we use the above-mentioned equivalence between looptrees and forested wedges. 
Consequently, $\eta$, viewed as a topological space, is homeomorphic to $\mcl G$ via a homeomorphism under which boundaries of bubbles of $\eta$ correspond to loops of $\mcl T^L$ or $\mcl T^R$. 
The corollary thus follows from Theorem~\ref{main}.
\end{proof}

\subsection*{Acknowledgements} We thank Jean Bertoin, Jason Miller and Scott Sheffield for helpful discussions. We thank two anonymous referees for helpful comments on an earlier version of the paper. 
E.G.\ was partially funded by NSF grant DMS 1209044.  J.P.\ was partially supported
by the National Science Foundation Graduate Research Fellowship under Grant No. 1122374.

\section{A $\kappa/4$-stable process description of SLE$_\kappa$ for $\kappa \in (4,8)$}
\label{sec-levy}

 \subsection{Liouville quantum gravity definitions} 
 \label{sec-lqg}

In order to define the pair of $\kappa/4$-stable processes which encode the geometry of $\eta$, we will need some definitions from the theory of Liouville quantum gravity (LQG). We will not state these definitions precisely here (instead referring to the cited papers), since the only feature of these definitions which is needed in the present paper is Theorem~\ref{LRthm} below.  

Let $\gamma := 4/\sqrt\kappa \in (\sqrt 2 ,2)$. If $D\subset\BB C$ is an open set and $h$ is a random distribution (generalized function) on $D$ which behaves locally like the Gaussian free field on $D$ (see~\cite{shef-gff,ss-contour,ig1,ig4} for more on the GFF) then the \emph{$\gamma$-LQG surface} associated with $h$ is, formally, the random Riemannian surface with Riemann metric tensor $e^{\gamma h(z)} \, (dx^2 + dy^2)$, where $dx^2 + dy^2$ denotes the Euclidean metric tensor. This definition does not make literal sense since $h$ is a distribution, not a pointwise-defined function, so we cannot exponentiate it. However, certain objects associated with $\gamma$-LQG surfaces can be defined rigorously using regularization procedures. 

For example, Duplantier and Sheffield~\cite{shef-kpz} constructed the volume form associated with a $\gamma$-LQG surface, which is a measure $\mu_h$ that can be defined as the limit of regularized versions of $e^{\gamma h(z)} \,dz$ (where $dz$ denotes Lebesgue measure). 
In a similar vein, one can define the \textit{$\gamma$-LQG length measure} $\nu_h$ on certain curves in $\ol D$, including $\bdy D$ and SLE$_{\wh\kappa}$-type curves for $\wh\kappa=\gamma^2$ (or equivalently the outer boundaries of SLE$_\kappa$-type curves, by SLE duality~\cite{zhan-duality1,zhan-duality2,dubedat-duality,ig1,ig4}) which are independent from $h$.
The $\gamma$-LQG length measure can be defined in various ways, e.g., using semi-circle averages of a GFF on a domain with smooth boundary and then confomally mapping to the complement of an SLE$_{\wh\kappa}$ curve~\cite{shef-kpz,shef-zipper} or directly as a Gaussian multiplicative chaos measure with respect to the Minkowski content of the SLE curve~\cite{benoist-lqg-chaos}. 
See also~\cite{rhodes-vargas-review,berestycki-gmt-elementary} for surveys of a more general theory of regularized measures of this form, which dates back to Kahane~\cite{kahane}. 

Also relevant for our purposes is the natural $\gamma$-LQG parametrization of an SLE$_\kappa$ curve $\eta$ sampled independently from $h$; we call this parametrization \emph{quantum natural time}. Parametrizing by quantum natural time is, roughly speaking, the same as parametrizing by ``quantum Minkowski content''. It is the quantum analogue of the so-called natural parametrization of SLE~\cite{lawler-shef-nat,lawler-zhou-nat}. The precise definition of quantum natural time can be found in~\cite[Definition~6.23]{wedges}. 

In this paper, we will always take $D = \BB H$ to be the upper half-plane and $h$ to be the GFF-type distribution corresponding to the so-called \emph{$\frac{4}{\gamma}-\frac{\gamma}{2}$- (equivalently, weight-$\frac{3\gamma^2}{2}-2$) quantum wedge}, which is defined precisely in~\cite[Definition 4.5]{wedges}. Roughly speaking, $h$ is obtained from $\wt h  - \left(\frac{4}{\gamma}-\frac{\gamma}{2} \right) \log |\cdot|$, for $\wt h$ a GFF on $\BB H$ with Neumann boundary conditions, by ``zooming in near the origin" and then re-scaling so that the $\gamma$-LQG mass of $\BB D\cap\BB H$ remains of constant order~\cite[Proposition 4.7(ii)]{wedges}.

 \subsection{Definition of $(L,R)$} 
 \label{sec-LRdef}
 
Let us now suppose that $h$ is the distribution corresponding to a $\frac{4}{\gamma}-\frac{\gamma}{2}$-quantum wedge ($\gamma=4/\sqrt\kappa$), as above, and our SLE$_\kappa$ curve $\eta$ is sampled independently from $h$ and then parametrized by $\gamma$-quantum natural time with respect to $h$. To define the processes $(L,R)$, consider for each $t > 0$ the hull generated by $\eta([0,t])$ (\textit{i.e.}, the closure of the set of points it disconnects from $\infty$) and let $x_t$ and $y_t$ denote the infimum and supremum, respectively, of the set of points where this hull intersects the real line.  We define the \textit{left boundary length} $L_t$ of $\eta$ at time $t$ to be the $\gamma$-LQG length of the boundary arc of the hull from $\eta(t)$ to $x_t$, minus the $\gamma$-LQG length of  the segment $[x_t,0]$.   
Similarly, we define the \textit{right boundary length} $R_t$ of $\eta$ at time $t$ to be the $\gamma$-LQG length of the boundary arc of the hull from $\eta(t)$ to $y_t$, minus the $\gamma$-LQG length of  the segment $[0,y_t]$. See Figure~\ref{fig-bdy-process} for an illustration. One can also thing of $L$ (resp.\ $R$) as measuring the ``net change" of the left (resp.\ right) boundary of the unbounded connected component of $\BB H\setminus \eta([0,t])$ between time 0 and time $t$. The definition of $(L,R)$ is the continuum analogue of the so-called horodistance process for peeling processes on random planar maps, as studied, e.g., in~\cite{curien-glimpse,gwynne-miller-perc}. 

\begin{figure}
\centering
\includegraphics[width=0.7\textwidth]{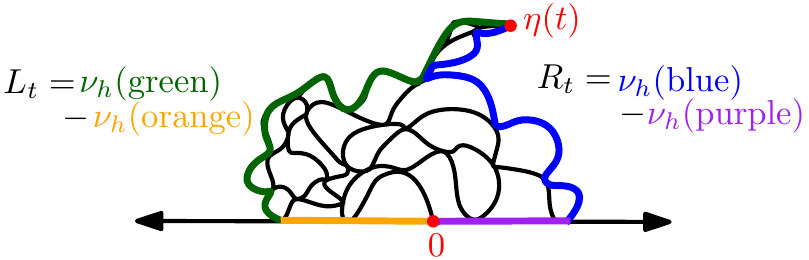}
\caption{The definitions of the processes $L$ and $R$.} \label{fig-bdy-process}
\end{figure}

The following is part of~\cite[Corollary 1.19]{wedges}, and is the only fact from LQG theory which we will need in this paper.

\begin{thm}
The processes $L_t$ and $R_t$ are i.i.d.\ totally asymmetric $\frac{\kappa}{4}$-stable L\'evy processes with only negative jumps.
\label{LRthm}
\end{thm}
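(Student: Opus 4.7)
The plan is to deduce Theorem~\ref{LRthm} by combining the Markov property of chordal SLE$_\kappa$ on the $\frac{4}{\gamma}-\frac{\gamma}{2}$-quantum wedge with the scale invariance of that wedge, and then to identify the stability index from the relative scalings of $\gamma$-LQG boundary length and quantum natural time under $h \mapsto h+c$. The argument splits naturally into four pieces: stationary and independent increments, self-similarity, absence of positive jumps, and independence of $L$ from $R$; the last of these is the main obstacle.

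For stationary and independent increments, the key structural input is that the $\frac{4}{\gamma}-\frac{\gamma}{2}$-wedge is \emph{quantum typical} with respect to chordal SLE$_\kappa$. Concretely, I would quote from~\cite{wedges} that for any quantum natural time $t>0$, the quantum surface obtained by restricting $h$ to the unbounded complementary component $U_t$ of $\eta([0,t])$ in $\BB H$, rooted at $\eta(t)$ and with second marked point $\infty$, is itself a $\frac{4}{\gamma}-\frac{\gamma}{2}$-wedge, conditionally independent of $\eta|_{[0,t]}$ and of $h$ restricted to the bubbles already formed by time $t$; moreover $\eta|_{[t,\infty)}$, parametrized by quantum natural time, is an SLE$_\kappa$ in this wedge from the root to $\infty$. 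Applying the definitions of $L_{t+s}-L_t$ and $R_{t+s}-R_t$ directly to the post-time-$t$ wedge-SLE pair then yields that the increments after time $t$ are independent of the process up to time $t$ and have the same law as $(L,R)$ itself, so $(L,R)$ is a two-dimensional L\'evy process.

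Next I would exploit the scale invariance of the wedge to obtain self-similarity. Adding a constant $c$ to $h$ multiplies the $\gamma$-LQG boundary length measure by $e^{\gamma c/2}$ and rescales quantum natural time by $e^{\beta c}$ for a fixed constant $\beta$ coming from the definition of quantum natural time in~\cite[Section~6]{wedges}. Because the wedge has the invariance that adding $c$ and rerooting at a suitable point along $\eta$ produces a wedge of the same law, the L\'evy process $(L,R)$ must satisfy $(L_{\lambda t}, R_{\lambda t})_{t\ge 0} \eqD (\lambda^{\gamma/(2\beta)} L_t, \lambda^{\gamma/(2\beta)} R_t)_{t\ge 0}$ for all $\lambda > 0$. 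A L\'evy process with such self-similarity of index $H$ is automatically $1/H$-stable, and computing $\beta$ from the definition of quantum natural time gives stability index $\kappa/4$, with L\'evy measure of the form $b|x|^{-1-\kappa/4} \BB 1_{(x<0)}\,dx$ once positive jumps are ruled out.

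Jumps of $L$ occur precisely at times when $\eta$ pinches off a bubble on its left side, and each such event strictly decreases the left boundary length of $U_t$: an entire boundary arc of positive $\gamma$-LQG length is removed from the outer boundary and replaced by a single pinch point, and no continuous or jump mechanism produces upward motion of $L$. Hence $L$ (and by the symmetric argument $R$) has only negative jumps. The genuinely delicate step is independence of $L$ and $R$, which is really the heart of~\cite[Corollary~1.19]{wedges}. My plan for this step is a conformal welding / mating-of-trees argument: weld two independent forested wedges of weight $\gamma^2-2$ along their outer boundaries via the $\gamma$-LQG length isometry, and then apply the conformal welding uniqueness results of~\cite{shef-zipper,wedges} to identify the welded surface with the $\frac{4}{\gamma}-\frac{\gamma}{2}$-wedge cut by the SLE$_\kappa$ curve $\eta$. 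Since the two forested wedges are independent by construction and encode $L$ and $R$ respectively, this identification transfers their independence to $(L,R)$. Establishing this welding identification is where the main work lies and is the step I expect to consume most of the effort.
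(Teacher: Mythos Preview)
The paper does not prove this theorem at all: it is quoted directly as part of~\cite[Corollary~1.19]{wedges} and explicitly flagged as the one LQG fact taken as a black box. So there is no ``paper's own proof'' to compare against; your proposal is an attempt to re-derive a result the authors deliberately import.

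That said, your outline is broadly faithful to how the result is actually obtained in~\cite{wedges}: the Markov property of the wedge/SLE pair gives stationary independent increments, the additive shift $h\mapsto h+c$ forces stable scaling with the index coming from the relative scalings of boundary length and quantum natural time, bubbles on one side only produce negative jumps, and the independence of $L$ and $R$ is the deep conformal-welding/mating-of-trees statement. Your assessment that the last step is where the real content lies is correct; it is not something you can complete in a few lines, and in~\cite{wedges} it occupies a substantial portion of the paper. For the purposes of the present work you should simply cite~\cite[Corollary~1.19]{wedges}, as the authors do.
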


\begin{remark} \label{remark-scaling}
Since scaling the time parametrization of a $\kappa/4$-stable L\'evy process gives another $\kappa/4$-stable L\'evy process, 
Theorem~\ref{LRthm} only specifies the law of $(L,R)$ up to a constant re-scaling of time, $(L_t ,R_t) \mapsto (L_{c t} , R_{c t})$ for a constant $c>0$ (or equivalently $(L_t ,R_t) \mapsto c^{4/\kappa} (L_t , R_t)$). 
The properties of $(L,R)$ which we will be interested in do not depend on this scaling, so one can make an arbitrary choice of $c$. 
In Section~\ref{sec-converseproof}, we will fix the scaling in a particularly convenient way. 
\end{remark}

Theorem~\ref{LRthm} is quite powerful because the behavior of these two L\'evy processes neatly encode a lot of the geometry of the SLE$_\kappa$ curve $\eta$; the following set of examples illustrates this connection and will be used repeatedly in the proof of our main results. (The equivalences described in these examples are direct consequences of the theorem.)

\begin{example}
\begin{enumerate}
\item
The time that a bubble of $\eta$ is formed corresponds to a downward jump in either $L_t$ or $R_t$.  For convenience, we call a bubble a \textit{left bubble} or \textit{right bubble} if it corresponds to a downward jump in $L_t$ or $R_t$, respectively.
\item
For $x>0$, let $\rho_x > 0$ be chosen so that the $\gamma$-LQG length of $[0,\rho_x]$ is $x$ (such an $x$ exists since the $\gamma$-LQG length measure has no atoms). The time at which $\eta$ disconnects $\rho_x$ from $\infty$---or, equivalently, the time the bubble with $\rho_x$ on its boundary is formed---is equal to the first time that the process $R_t$ jumps below $-x$. Note that this bubble a.s.\ exists and is unique since $\rho_x$ is independent from $\eta$, so $\eta$ a.s.\ does not hit $\rho_x$. The analogous result holds with $L$ in place of $R$ and with LQG lengths along the negative real axis in place of LQG lengths along the positive real axis.
\item
If $\eta$ forms a left bubble at a time $\tau > 0$, then for $t \in [0,\tau]$ the point $\eta(t)$ lies on the boundary of this bubble if and only if $\inf\{s>t:L_s\leq L_t\} = \tau$, \textit{i.e.}, the time reversed process $L_{\tau  -\cdot} $ attains a running minimum at time $\tau-t$.  The analogous result holds for right bubbles.
\end{enumerate}
\label{examples}
\end{example}

Before introducing one last example describing the geometry of $\eta$ in terms of $(L,R)$, we recall some definitions from the theory of SLE.

\begin{defn}
We say that $t \geq 0$ is a \textit{local cut time} of $\eta$, and $\eta(t)$  a \textit{local cut point}, if $\eta\left( [0,t] \right) \cap \eta\left((t,t+\epsilon]\right) = \emptyset$ for some $\epsilon > 0$.  We call $t$ a \textit{global cut time} and $\eta$ a \textit{global cut point} if $\eta\left( [0,t]\right) \cap \eta\left((t,\infty)\right) = \emptyset$. Since in this paper we will usually want to consider local rather than global cut points, we will refer to local cut points and local cut times simply as \textit{cut points} and \textit{cut times}, respectively.
\end{defn}

\begin{lem} \label{lem-cut-pts}
Almost surely, the set of local cut times for $\eta$ is precisely the set of times $t\geq 0$ for which there exist two connected components (bubbles) $b_1,b_2$ of $\BB H\setminus \eta$ with $\eta(t) \in \bdy b_1\cap \bdy b_2$. Furthermore, if $\bdy b_1\cap \bdy b_2 \not=\emptyset$, then one of $b_1$ or $b_2$ lies to the left of $\eta$ and the other lies to the right of $\eta$. 
\end{lem}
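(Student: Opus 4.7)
The plan is to prove both assertions simultaneously by reducing the two conditions---``$t$ is a local cut time'' and ``$\eta(t)$ lies on the boundary of two distinct bubbles''---to the common condition, call it $(*)$, that there exists $\delta > 0$ such that $L_s > L_t$ and $R_s > R_t$ for all $s \in (t, t+\delta)$. Here $(L,R)$ is the pair of independent spectrally negative $\kappa/4$-stable L\'evy processes from Theorem~\ref{LRthm}. The ``Furthermore'' assertion will drop out as a by-product of the uniqueness built into this analysis.

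First, using Example~\ref{examples}(3) and its right analog, I would observe that the formation time $\tau$ of a left bubble whose boundary contains $\eta(t)$ is uniquely determined by $t$: either $\tau = t$ when $t$ is a downward jump of $L$, or $\tau = \inf\{s > t : L_s \leq L_t\}$, attained at a jump with $L_\tau < L_t$, otherwise. The analogous uniqueness holds on the right. This immediately gives the ``Furthermore'' statement, since any point of $\bdy b_1 \cap \bdy b_2$ is almost surely of the form $\eta(t)$, and then $\eta(t)$ can lie on at most one left and at most one right bubble boundary. Moreover, under $(*)$, the infimum $\tau_L := \inf\{s > t : L_s < L_t\}$ is almost surely finite (by recurrence of the mean-zero $\kappa/4$-stable process, with $\kappa/4 \in (1,2)$) and is attained at a downward jump (by the absence of downward creeping for such spectrally negative stable processes). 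Together with $(*)$, this places $\eta(t)$ on the boundary of the left bubble formed at $\tau_L$ and, symmetrically, on the boundary of a right bubble, so $(*)$ implies the two-bubble condition. The converse implication follows in the same way, using that $L$ and $R$ almost surely do not jump simultaneously.

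The geometric heart of the argument is the equivalence between $(*)$ and the local cut time condition. For $(*) \Rightarrow$ cut: every self-intersection of $\eta$ coincides with a bubble formation, hence with a jump of $L$ or $R$, and the running-minimum characterization from Example~\ref{examples}(3) forces the post-jump value of the relevant process to equal its value at the bubble's ``other side'' $u$. If $u \in [0,t]$ at some $s \in (t, t+\delta)$, one obtains $L_s = L_u \leq L_t$ (or the analogous inequality for $R$), contradicting $(*)$; hence $\eta$ cannot revisit $\eta([0,t])$ inside $(t, t+\delta)$. For the converse, suppose $t$ is a local cut time with witness $\epsilon$ but $(*)$ fails for $L$. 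For each $\delta < \epsilon$, pick $\tau_\delta \in (t, t+\delta]$ attaining $\min_{[t,t+\delta]} L$; the failure of $(*)$ gives $L_{\tau_\delta} < L_t$, and no-creeping forces $\tau_\delta$ to be a downward jump. By minimality and the almost sure uniqueness of the minimum, $L_{s'} > L_{\tau_\delta}$ for $s' \in (t, \tau_\delta)$, which via Example~\ref{examples}(3) forces the other side of the associated left bubble to satisfy $u \leq t$, whence $\eta(\tau_\delta) \in \eta([0,t])$ with $\tau_\delta \in (t, t+\epsilon)$, contradicting the cut property.

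The main obstacle I anticipate is verifying that the minimizer $\tau_\delta$ genuinely produces a bubble whose other side lies in $[0,t]$. This relies on combining the running-minimum identification of bubble boundaries from Example~\ref{examples}(3) with the finer path properties of spectrally negative $\kappa/4$-stable processes---absence of downward creeping, absence of fixed discontinuities, and simplicity of local minima---thereby reducing the purely geometric question about SLE to a clean statement about the joint path of $(L,R)$.
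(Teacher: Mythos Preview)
Your approach is genuinely different from the paper's.  The paper proves Lemma~\ref{lem-cut-pts} purely with SLE technology: it reduces local to global cut points via reversibility and the domain Markov property, then identifies global cut points as intersection points of the SLE$_{16/\kappa}$-type outer boundaries $\eta^L,\eta^R$ using the flow-line description from imaginary geometry.  The ``two bubbles $\Rightarrow$ cut'' direction is handled by a direct topological argument.  In particular, the paper \emph{does not} pass through the $(L,R)$-condition $(*)$; the characterization of cut times by $(*)$ (Example~\ref{fourthexample}) is recorded only \emph{after} the lemma, and its second half already presupposes the lemma.

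Your route through $(*)$ is attractive, and the implication ``$(*)\Rightarrow$ two bubbles'' is essentially fine.  But there is a genuine gap in your argument for ``local cut time $\Rightarrow (*)$'' (equivalently ``not $(*)\Rightarrow$ not cut'').  You take $\tau_\delta$ to be the minimizer of $L$ on $(t,t+\delta]$ and assert that the left bubble formed there has its ``other side'' at some $u\le t$, giving $\eta(\tau_\delta)\in\eta([0,t])$.  This need not hold.  First, the bubble at $\tau_\delta$ may be formed by $\eta$ hitting $\BB R\setminus[x_t,y_t]$ rather than by a self-intersection, in which case $\eta(\tau_\delta)\notin\eta([0,t])$ at all.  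Second, even for a self-intersection, your minimality gives only $L_{s'}>L_{\tau_\delta}$ for $s'\in(t,\tau_\delta)$; via Example~\ref{examples}(3) the pinch-point time $u$ satisfies $L_r>L_u$ on $(u,\tau_\delta)$, which is perfectly consistent with $u\in(t,\tau_\delta)$.  (The ``no downward creeping'' fact you invoke is about first passage below a level, not about where the minimum on a compact interval is attained.)  What you really need is the \emph{first} time after $t$ at which $L$ passes below $L_t$, but the very failure of $(*)$ forces $\inf\{s>t:L_s\le L_t\}=t$, so this first-passage time collapses and no single jump does the job.  Filling this in requires a geometric/continuity argument that for $s$ just after $t$, $L_s\le L_t$ forces $\eta$ to touch the old left outer boundary---and in particular ruling out the real-line scenario when $\eta(t)\in\BB R$.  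The paper handles exactly this exceptional set with the flow-line fact $\eta^L\cap(0,\infty)=\eta^R\cap(-\infty,0)=\emptyset$, so your approach ultimately needs SLE input of comparable depth.
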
 

 See Figure~\ref{pathbubblefigure} below for an illustration of the statement of Lemma~\ref{lem-cut-pts}. 
Lemma~\ref{lem-cut-pts} implies that cut points correspond to edges of the adjacency graph of bubbles. 
The last statement of Lemma~\ref{lem-cut-pts} implies that this adjacency graph is bipartite. 

\begin{proof}[Proof of Lemma~\ref{lem-cut-pts}]
We first argue that a.s.\ every local cut point is an intersection point of the boundaries of two bubbles of $\eta$. 
Choose a countable collection $\mcl T$ (resp.\ $\ol{\mcl T}$) of stopping times for $\eta$ (resp.\ its time reversal) which is a.s.\ dense in $[0,\infty)$. 
By reversibility~\cite{ig4} and the domain Markov property, for any fixed $\tau \in \mcl T$ and $\ol\tau \in \ol{\mcl T}$, on the event $\{\tau < \ol \tau\}$ the conditional law of $\eta|_{[\tau,\ol \tau]}$ given $\eta|_{[0,\tau] \cup [\ol \tau , \infty)}$ is that of an SLE$_\kappa$ from $\eta(\tau)$ to $\eta(\ol \tau)$ in the appropriate connected component of $\BB H\setminus \eta([0,\tau] \cup [\ol \tau , \infty))$.

A time $t > 0$ is a local cut time for $\eta$ if and only if there exists $\tau \in\mcl Q$ and $\ol\tau\in \ol{\mcl Q}$ such that $\tau < t < \ol \tau$ and $t$ is a global cut time for $\eta|_{[\tau , \ol \tau]}$. It therefore suffices to show that a.s.\ every global cut point of $\eta$ is an intersection point of the boundaries of two connected components of $\BB H\setminus \eta$. A global cut point is the same as a point where the left and right outer boundaries of $\eta$ intersect. By~\cite[Theorem 1.4]{ig1}, the left and right outer boundaries $\eta^L$ and $\eta^R$ of $\eta$ can be described as a pair of flow lines of a GFF on $\BB H$. Each of $\eta^L$ and $\eta^R$ is a simple curve, and $\eta^L$ (resp.\ $\eta^R$) does not intersect $(0,\infty)$ (resp.\ $(-\infty,0)$). Consequently, every point of $\eta^L\cap \eta^R$ lies on the boundary of a connected component of $\BB H\setminus \eta^L$ whose boundary intersects $\BB R$ and on the boundary of a connected component of $\BB H\setminus \eta^R$ whose boundary intersects $\BB R$. Each of these connected components is also a connected component of $\BB H\setminus \eta$. 

We remark that the fact that $\eta^L \cap (0,\infty) = \eta^R \cap (-\infty,0) =\emptyset$ shows that $\eta$ a.s.\ does not have any global cut points in $\BB R$, so by the domain Markov property $\eta$ a.s.\ does not have any local cut points $t$ with $\eta(t) \in \eta([0,t))$. By combining this with reversibility, we see that a.s.\ no local cut point of $\eta$ is a double point. 

We now argue that each point on the intersection of two bubbles is a local cut point for $\eta$. 
We first observe that a.s.\ no time at which $\eta$ disconnects a bubble from $\infty$ is a local cut time for $\eta$.
Indeed, each bubble contains a point with rational coordinates and the time at which $\eta$ disconnects such a point from $\infty$ is a stopping time, so a.s.\ is not a local cut time by the domain Markov property.

Now consider two bubbles $b_1,b_2 $ with $\bdy b_1\cap \bdy b_2 \not=\emptyset$, and suppose that $\eta$ finishes tracing $\bdy b_1$ before it finishes tracing $\bdy b_2$. 
Let $\sigma$ be the time at which $\eta$ finishes tracing $\bdy b_1$.  
Let $t \geq 0$ with $\eta(t) \in \bdy b_1\cap \bdy b_2$. By the preceding paragraph, $t\not=\sigma$, so by the definition of $\sigma$, after possibly replacing $t$ with a time $t' < t$ with $\eta(t') = \eta(t)$, we can arrange that $t  <\sigma$. 
Since $\eta$ does not finish tracing $\bdy b_2$  
until after time $\sigma$, $\eta([0,\sigma])$ does not disconnect any point of $b_2$ from $\infty$.
Therefore, for any $\ep \in [0,\sigma-t)$ we can find paths in $\BB H\setminus \eta([0,\sigma-\ep])$ 
from each of the two sides (prime ends) of $\eta(t)$ to $\infty$. This shows that $\eta([0,t))$ and $\eta([t,\sigma-\ep])$ are disjoint. 

Hence $t$ is a local cut time for $\eta$. 

To obtain the second statement of the lemma, we note that our proof that every local time point lies on the boundaries of two distinct bubbles shows that in fact any such cut point lies on the boundaries of two distinct bubbles which lie on opposite sides of $\eta$. The second statement follows from this and the first statement. 
\end{proof}

\begin{example}
In terms of the left and right boundary processes, cut times are times $t$ for which there exists $\ep > 0$ such that $L_s > L_t$ and $R_s > R_t$ for each $s \in (t,t+\ep]$; and global cut times are cut times $t$ such that the processes $L$ and $R$ achieve record minima when they first jump below $L_t$ and $R_t$, respectively, after time $t$.  The processes $L$ and $R$ also identify the two bubbles whose boundaries share a given cut point: if $t$ is the cut time, then the two bubbles are formed at the first times after $t$ that the processes jump below $L_{t}$ and $R_{t}$, respectively.  
Finally, we note that, if $t$ is a global cut time, then the union of the two corresponding bubbles $b,b'$ disconnects the set of bubbles formed before time $t$ from all other bubbles in the adjacency graph. 
\label{fourthexample}. 
\end{example}

\subsection{$(L,R)$-Markovian paths to infinity}
\label{sec-markovpath}

We now use this L\'evy process description of SLE$_\kappa$ for $\kappa \in (4,8)$ to define a ``Markovian path to infinity'' in the adjacency graph of SLE bubbles.

\begin{defn} \label{def-markovpath}
For $\kappa \in (4,8)$,  an \textit{$(L,R)$-Markovian path to infinity} in the adjacency graph of bubbles of $\eta$ is an infinite increasing sequence of stopping times $\tau_1 < \tau_2 < \tau_3 < \cdots $ for $(L,R)$  such that almost surely
\begin{itemize} 
\item $\tau_k \rightarrow \infty$,
\item $\eta$ forms a bubble $b_k$ at each time $\tau_k$ (equivalently, either $L$ or $R$ has a downward jump at time $\tau_k$), and
\item $b_k$ and $b_{k+1}$ are connected in the adjacency graph (i.e., $\bdy b_k\cap\bdy b_{k+1}\not=\emptyset$) for each $k$.
\end{itemize}
\end{defn}

Note that an $(L,R)$-Markovian path to infinity is a \emph{random} path defined for almost every realization of the SLE$_{\kappa}$ curve.  

The existence of $(L,R)$-Markovian paths to infinity is a sufficient condition for connectivity of the adjacency graph of bubbles.

\begin{lem}
Let $\kappa \in (4,8)$, and suppose that, for every stopping time $\zeta$ for $(L,R)$ at which $\eta$ forms a bubble almost surely, the adjacency graph of bubbles admits an $(L,R)$-Markovian path to infinity with $\tau_1 = \zeta$. Then the adjacency graph is connected almost surely.
\label{suffcondition}
\end{lem}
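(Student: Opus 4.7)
The plan is to use the global cut times from Example~\ref{fourthexample} as ``chokepoints'' through which two $(L,R)$-Markovian paths to infinity starting from different bubbles must both pass. First I enumerate the bubbles of $\eta$ via a countable family of $(L,R)$-stopping times: by Example~\ref{examples}, for each rational $x > 0$ the time $\sigma_x^R := \inf\{t : R_t \leq -x\}$ is almost surely finite, is a stopping time for $(L,R)$, and at time $\sigma_x^R$ the curve $\eta$ forms the right bubble whose boundary contains $\rho_x$; as $x$ ranges over the positive rationals, the resulting bubbles a.s.\ exhaust all right bubbles of $\eta$, and the analogous construction with $L$ handles the left bubbles. Applying the hypothesis to each such stopping time $\zeta$ produces an $(L,R)$-Markovian path to infinity $(\tau_k^\zeta)_{k\geq 1}$ with associated adjacent bubbles $(b_k^\zeta)_{k\geq 1}$, with $b_1^\zeta$ being the bubble formed at $\zeta$.

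The second ingredient is that almost surely the set of global cut times of $\eta$ is unbounded. Granting this, for each such global cut time $t$ the two associated bubbles $b_t^L, b_t^R$ (formed, respectively, at the first times after $t$ at which $L$ and $R$ jump below $L_t, R_t$) are adjacent (they share the cut point $\eta(t)$), both are formed at times strictly greater than $t$, and the pair $\{b_t^L, b_t^R\}$ disconnects the bubbles formed before $t$ from the remaining bubbles in the adjacency graph by Example~\ref{fourthexample}.

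To finish, fix two bubbles $b, b'$ from the enumeration, formed at stopping times $\zeta, \zeta'$; on the a.s.\ event that global cut times are unbounded, choose such a time $t > \max(\zeta, \zeta')$. Since $\tau_k \to \infty$ along the Markovian path from $b$ and $\tau_1 < t$, there is a smallest $k^*$ with $\tau_{k^*} > t$; the adjacency $b_{k^*-1} \sim b_{k^*}$ crosses the separating set, and because $b_{k^*-1}$ is formed before $t$ while $b_t^L, b_t^R$ are formed after $t$, necessarily $b_{k^*} \in \{b_t^L, b_t^R\}$. The same argument applied to the path from $b'$ gives an index with the analogous property; concatenating the finite initial segments of the two paths, and inserting the single extra edge $b_t^L \sim b_t^R$ if the two paths hit opposite chokepoint bubbles, yields a finite adjacency-graph path from $b$ to $b'$. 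A union bound over the countable collection of pairs of bubbles gives almost-sure connectivity of the adjacency graph. The main obstacle will be verifying that the set of global cut times is almost surely unbounded; I expect this to follow from a L\'evy-process fluctuation argument about $(L, R)$ using the oscillation of each coordinate (both oscillate since $\kappa/4 \in (1,2)$) together with scale invariance, or equivalently from the scale invariance of $\eta$ and the fact that global cut points of $\eta$ are exactly the intersection points of its left and right SLE$_{16/\kappa}$ outer boundaries.
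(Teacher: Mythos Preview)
Your overall strategy---using a global cut time as a chokepoint that any $(L,R)$-Markovian path to infinity must traverse, and then concatenating two such paths---is exactly the paper's approach, and your argument that $b_{k^*}\in\{b_t^L,b_t^R\}$ is correct.

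There is, however, a genuine gap in your enumeration of bubbles. The time $\sigma_x^R=\inf\{t:R_t\leq -x\}$ is a jump time of $R$ at which $R$ achieves a new running minimum; the bubble formed then is the one containing $\rho_x$, i.e.\ a right bubble whose closure meets $(0,\infty)$. But not every right bubble has this property: when $\eta$ closes off a region by hitting its own right outer boundary rather than the real axis, $R$ jumps without setting a new record minimum, and such a jump time is not of the form $\sigma_x^R$ for any $x$. So your claim that the $\sigma_x^R$ ``exhaust all right bubbles'' is false, and the argument as written only proves connectivity among bubbles touching~$\mathbb{R}$. The fix is easy and is what the paper does: index bubbles by pairs $(t,N)\in(\mathbb{Q}\cap[0,\infty))\times\mathbb{N}$, taking for each pair the first jump of $L$ or $R$ after time $t$ of magnitude at least $1/N$; this is an $(L,R)$-stopping time at which a bubble is formed, and every bubble arises this way.

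Regarding the unboundedness of global cut times: the paper does not argue this from the L\'evy description but simply cites the SLE cut-point result of Miller--Wu, which gives that the set of cut points is a.s.\ nonempty (hence, by scale invariance, the cut-time set is unbounded).
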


\begin{proof}
The event that the adjacency graph is connected can be expressed as the countable union over all pairs of times $t_1,t_2 \in \BB Q\cap [0,\infty)$ and all $N \in \mathbb{N}$ of the event that $b_{1}$ and $b_{2}$ are joined by a path in the adjacency graph, where for $j\in\{1,2\}$, $b_j$ is the first bubble formed after time $t_j$ that corresponds to a jump of either $L$ or $R$ of magnitude at least $1/N$. Fix such a triple $(t_1,t_2,N)$, and let $\zeta_1$ and $\zeta_2$ be the times at which $\eta$ forms the bubbles $b_1$ and $b_2$, respectively. Since $\eta$ a.s. has arbitrarily large global cut times (see, e.g.\,~\cite[Theorem 1.2]{miller-wu-dim}), we can a.s.\ choose a global cut point $\eta(s)$ with $s>\zeta_1,\zeta_2$.  The point $\eta(s)$ lies on the boundary of two bubbles $b_{3}$ and $b_{4}$ (adjacent to each other) that, as noted in Example \ref{fourthexample} above, together disconnect the set of bubbles formed up to time $s$ from all other bubbles in the adjacency graph. Hence, the $(L,R)$-Markovian paths started at each of $\zeta_1$ and $\zeta_2$ must each pass through one of $b_3$ or $b_4$, which yield finite paths from each of $b_1$ and $b_2$ to either $b_3$ or $b_4$.
\end{proof}

In light of Lemma~\ref{suffcondition}, Theorem~\ref{main} will be an immediate consequence of the following theorem.

\begin{thm}
Suppose $\kappa \in (4,\kappa_0 ]$, with $\kappa_0 \approx  5.6158$ defined as in Theorem \ref{main}. If  $\zeta$ is a stopping time of $(L,R)$ such that $\eta$ forms a bubble at time $\zeta$ almost surely, then the adjacency graph of bubbles admits an $(L,R)$-Markovian path to infinity with $\tau_1 = \zeta$.
\label{path}
\end{thm}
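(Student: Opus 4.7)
The plan is to construct the $(L,R)$-Markovian path iteratively using a canonical scale-invariant selection rule, and then to establish $\tau_k \to \infty$ via the Markov and scaling properties of $(L,R)$. Starting from $\tau_1=\zeta$, at each step I have a bubble $b_k$ formed at stopping time $\tau_k$, which I may assume without loss of generality is a left bubble (i.e., $L$ has a downward jump at $\tau_k$). By Lemma~\ref{lem-cut-pts} any adjacent $b_{k+1}$ must then be a right bubble, and by Examples~\ref{examples}(3) and~\ref{fourthexample}, $b_{k+1}$ is adjacent to $b_k$ precisely when it is the right bubble associated to some cut time $t < \tau_k$ on the ``$L$-spine'' of $b_k$---meaning $L_s > L_t$ for $s \in (t,\tau_k)$ and $L_{\tau_k} \leq L_t$---such that additionally $R_s > R_t$ for all $s \in (t,\tau_k)$, so that the first time $R$ drops to or below $R_t$ occurs after $\tau_k$. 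I would then define $\tau_{k+1}$ to be the first time after $\tau_k$ at which any such right bubble is formed; this is a stopping time for $(L,R)$ since the adjacency condition can be read off from $(L,R)|_{[0,\tau_{k+1}]}$.

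A preliminary step is to check that $\tau_{k+1} < \infty$ almost surely, so that the iteration never halts. For this I would use that candidate cut times on the $L$-spine accumulate at $\tau_k$: since $R$ has only downward jumps, for $t$ sufficiently close to $\tau_k$ on the $L$-spine the process $R$ must have remained strictly above $R_t$ throughout $(t,\tau_k)$, so candidate cut times whose associated right bubble is still unformed by time $\tau_k$ exist almost surely. The spectral negativity and independence of $R$ then force $R$ to eventually jump below at least one such $R_t$ in finite time.

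With the construction in hand, I would invoke the strong Markov property of $(L,R)$ at each $\tau_k$ together with the self-similarity $(L_\cdot,R_\cdot) \eqD (c^{-4/\kappa}L_{c\cdot},\,c^{-4/\kappa}R_{c\cdot})$ to set up an auxiliary Markov chain on a scale-invariant configuration space. Writing $\delta_k$ for the magnitude of the jump producing $b_k$, this yields that the increments $X_k := \log(\delta_{k+1}/\delta_k)$ form an i.i.d.\ sequence whose common law depends only on $\kappa$. Under the same scaling, $\tau_{k+1}-\tau_k$ takes the form $\delta_k^{\kappa/4}\,W_k$ for an i.i.d.\ sequence $W_k$ taking values in $(0,\infty)$, so the strong law of large numbers applied to $\log\delta_k = \log\delta_1 + X_1 + \cdots + X_{k-1}$ reduces the assertion $\tau_k \to \infty$ to the positivity of $\BB E[X_1]$.

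The main obstacle, and where the threshold $\kappa_0$ enters, will be the explicit computation of $\BB E[X_1]$. By further scaling and by conditioning on the value of one coordinate at the relevant passage time, I expect this expectation to be expressible in terms of expected logarithms of canonical functionals of the independent spectrally negative $\kappa/4$-stable processes $L$ and $R$---chiefly the overshoot of one coordinate at its first passage below a running minimum and the time needed for such a passage. Feeding in the overshoot and first-passage distributions of spectrally negative stable processes available in the Lévy process literature (e.g.,~\cite{chaumont-doney-local-times,chaumont-decomp,bertoin-savov-duality,dk-overshoot,paolella,peskir}) and applying the digamma reflection identity $\psi(1-s)-\psi(s) = \pi\cot(\pi s)$, I expect to identify $\BB E[X_1]$, up to a positive multiplicative constant, with $\pi\cot(\pi\kappa/4) + \psi(2-\kappa/4) - \psi(1)$. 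A final monotonicity analysis in $\kappa$, together with the definition of $\kappa_0$ as the unique zero of this expression on $(4,8)$, will complete the proof of Theorem~\ref{path} on $(4,\kappa_0]$.
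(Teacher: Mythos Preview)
There is a genuine gap in your construction of $\tau_{k+1}$. You take it to be the first time after $\tau_k$ at which some right bubble adjacent to $b_k$ is formed, but no such first time exists. As you yourself observe, candidate cut times $t$ on the $L$-spine of $b_k$ accumulate at $\tau_k$; for $t$ close to $\tau_k$ one has $R_t$ close to $R_{\tau_k}$ (there is no $R$-jump at $\tau_k$ since $b_k$ is a left bubble), and hence the first time after $\tau_k$ that $R$ drops below $R_t$---the formation time of the associated right bubble---is close to $\tau_k$. Thus the formation times of right bubbles adjacent to $b_k$ accumulate at $\tau_k$ from above, with infimum $\tau_k$ not attained. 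Your accumulation argument, rather than ensuring $\tau_{k+1} < \infty$, actually shows that your selection rule is ill-posed.

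The paper's construction (Proposition~\ref{reducingtoasinglebubble}) instead picks the \emph{earliest} cut time $\sigma_k = \sigma(\tau_k)$ and sets $\tau_{k+1}$ to be the first time the opposite coordinate jumps below its value at $\sigma_k$. Crucially, the scale that makes the i.i.d.\ structure work is $X_k := R_{\tau_k} - R_{\sigma_k}$ (for $b_k$ a left bubble), \emph{not} the jump magnitude $\delta_k$: with this choice $\tau_{k+1}$ depends on the past only through the single number $X_k$, so scaling reduces each step to the canonical problem with $\tau$ equal to the first time $R$ jumps below $-1$, and $X_{k+1}/X_k$ is genuinely i.i.d.\ with the law of $L_\tau - L_\sigma$. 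Your jump magnitudes do not have this property, since even with a well-defined rule the next step would depend on more of the past than $\delta_k$ alone. Your computational sketch also misses the two substantive ingredients of the paper's proof of Theorem~\ref{onebubble}: a stochastic domination step (Proposition~\ref{stochasticdomination}) replacing $L_\tau - L_\sigma$ by the tractable quantity $R_{\tau^-} - R_\sigma$, and an arcsine law for a stable subordinator (Lemma~\ref{lem-arcsine}) that yields the $\psi(2-\kappa/4)-\psi(1)$ term; the overshoot densities you cite account only for the $\pi\cot(\pi\kappa/4)$ part.
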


The $(L,R)$-Markovian path appearing in Theorem~\ref{path} is defined explicitly in the proof of Proposition~\ref{reducingtoasinglebubble} below. The times $\tau_k$ can be taken to be stopping times for $\eta$ as well as for $(L,R)$.  

Theorem~\ref{path} gives a strictly stronger connectivity condition for the adjacency graph of bubbles than Theorem~\ref{main}. This stronger condition does not hold for all $\kappa \in (4,8)$. 

\begin{thm}
There exists $\kappa_1 \in (\kappa_0,8)$ such that for $\kappa \in [\kappa_1,8)$, the adjacency graph of bubbles does \emph{not} admit an $(L,R)$-Markovian path to infinity (with any choice of starting time).
\label{largekappa}
\end{thm}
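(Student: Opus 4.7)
By the reduction from Section~\ref{sec-reducing}, the existence of an $(L,R)$-Markovian path to infinity starting from a given stopping time can be rephrased in terms of the asymptotic behavior of a multiplicative random walk built from successive ``gaps'' along the path. More precisely, the scale invariance and strong Markov property of $(L,R)$ make it natural to iterate the construction in a self-similar fashion: at each step, the ratio of consecutive bubble scales (or equivalently consecutive stopping times $\tau_{k+1}/\tau_k$) is an i.i.d.\ copy of a random variable $Z = Z^{(\kappa)}$ defined explicitly in terms of the first-passage structure of $(L,R)$. The requirement $\tau_k \to \infty$ from Definition~\ref{def-markovpath} then becomes the requirement that the partial sums $S_k = \sum_{i=1}^k \log Z_i$ tend to $+\infty$. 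By the strong law of large numbers, this holds almost surely when $\mu(\kappa) := \BB E[\log Z^{(\kappa)}] > 0$, and fails almost surely when $\mu(\kappa) < 0$; in the latter case the log-scale drifts to $-\infty$, so $\tau_k$ stays bounded and no $(L,R)$-Markovian path to infinity exists starting from \emph{any} stopping time.

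To prove Theorem~\ref{largekappa} it therefore suffices to exhibit some $\kappa \in (\kappa_0, 8)$ with $\mu(\kappa) < 0$ and invoke continuity of $\mu$ in $\kappa$. I would derive an explicit formula for $\mu(\kappa)$ using the theory of spectrally negative stable Lévy processes---concretely, the path decomposition of Chaumont--Doney~\cite{chaumont-decomp, chaumont-doney-local-times} at the infimum, the Bertoin--Savov duality~\cite{bertoin-savov-duality}, and the explicit overshoot/undershoot distributions of~\cite{dk-overshoot} for the first-passage times of $L$ and $R$. The resulting closed-form expression for $\mu(\kappa)$ should involve the digamma function $\psi$ and the cotangent, paralleling the critical equation $\pi\cot(\pi\kappa/4) + \psi(2-\kappa/4) - \psi(1) = 0$ from Theorem~\ref{main}. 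To produce a concrete $\kappa$ with $\mu(\kappa) < 0$, I would analyze the regime $\kappa \to 8^-$. In this limit the stability index $\kappa/4 \to 2$ and the Lévy measure $b|x|^{-\kappa/4}\BB 1_{(x<0)}\,dx$ becomes dominated by small jumps, so the typical ``next bubble'' in the Markovian rule is overwhelmingly smaller than the current one; quantitatively, both $\pi\cot(\pi\kappa/4)$ and $\psi(2-\kappa/4)$ diverge to $-\infty$ as $\kappa \to 8^-$, and I expect the closed form for $\mu(\kappa)$ to diverge to $-\infty$ along with them. By continuity this forces $\mu(\kappa)<0$ on an interval $[\kappa_1,8)$ for some $\kappa_1\in(\kappa_0,8)$.

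The main obstacle will be rigorously establishing the closed-form expression for $\mu(\kappa)$ and verifying its sign near $\kappa = 8$. The computation requires integrating the logarithm of a functional of a pair of independent stable processes against a joint distribution involving first-passage times and overshoots, and is sensitive to the normalization of the stable processes (cf.\ Remark~\ref{remark-scaling}); delicate interchange-of-limits and contour-integration arguments are likely to be necessary to bring the expression into manageable form. However, since much of the algebraic machinery is already being developed for the proof of Theorem~\ref{path} (which identifies the explicit critical quantity $\pi\cot(\pi\kappa/4) + \psi(2-\kappa/4) - \psi(1)$), the sign analysis for $\mu$ should ultimately reduce to elementary function-theoretic observations about this---or a closely related---explicit function of $\kappa$.
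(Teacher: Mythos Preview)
Your proposal misidentifies the structure of the reduction in Section~\ref{sec-reducing}. It is \emph{not} the case that a single random variable $Z^{(\kappa)}$ governs both existence and non-existence of Markovian paths. In the forward direction (Theorem~\ref{path}) one constructs a \emph{specific} Markovian path for which the ratios $X_{k+1}/X_k$ are i.i.d.\ with the law of $L_\tau - L_\sigma$, and positivity of $\BB E\log(L_\tau - L_\sigma)$ gives existence of that particular path. But Theorem~\ref{largekappa} asserts that \emph{no} Markovian path exists, for any choice of stopping times. Ruling out all such paths requires a different, larger quantity: by the converse half of Proposition~\ref{reducingtoasinglebubble} one must show $\BB E\log\bigl(\sup_{t\in\mathcal M}(L_t - L_{\sigma(t)})\bigr) < 0$, where $\mathcal M$ is the set of running-minimum times of $L$ in $[0,\tau]$. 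This supremum is what absorbs the freedom in choosing the next stopping time in an arbitrary Markovian path; the ratios $X_{k+1}/X_k$ along such a path are only stochastically \emph{dominated} by it, not i.i.d.\ copies of a fixed $Z$. Proving $\BB E\log(L_\tau - L_\sigma)<0$ would only rule out the one specific path, not all of them.

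A second gap: the expression $\pi\cot(\pi\kappa/4) + \psi(2-\kappa/4) - \psi(1)$ from Theorem~\ref{main} is not a closed form for either quantity. It equals $\BB E\log(R_{\tau^-} - R_\sigma)$, which by Proposition~\ref{stochasticdomination} is only a \emph{lower bound} for $\BB E\log(L_\tau - L_\sigma)$. That this lower bound diverges to $-\infty$ as $\kappa\to 8^-$ says nothing about the sign of the quantity it bounds from below, still less about the larger supremum over $\mathcal M$. The paper does not attempt an explicit formula for the supremum quantity, and it is unclear one exists. Instead (Section~\ref{sec-converseproof}) the paper fixes a convenient normalization and shows that $(L,R)$ converges in the Skorokhod topology to a pair of independent Brownian motions as $\kappa\to 8$ (Proposition~\ref{convtoBM}); since an uncorrelated planar Brownian motion a.s.\ has no $\pi/2$-cone times, the intervals $[\sigma(t),t]$ collapse and $\sup_{t\in\mathcal M}(L_t - L_{\sigma(t)})\to 0$ in law (Corollary~\ref{noconetimes}). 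A uniform-integrability estimate on $\max_{s\in[0,\tau]}\log|L_s|$ then forces the log-expectation to be negative for $\kappa$ sufficiently close to $8$. This is why the resulting $\kappa_1$ is non-explicit, as the theorem statement already signals.
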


Our proof of Theorem~\ref{largekappa} is based on the fact that a $\kappa/4$-stable process converges in law to Brownian motion as $\kappa$ increases to 8 (Proposition~\ref{convtoBM}), and does not give an explicit formula for $\kappa_1$.

\section{Reducing to an estimate for a single bubble}
\label{sec-reducing}

\begin{figure}
\centering
\includegraphics[width=.7\textwidth]{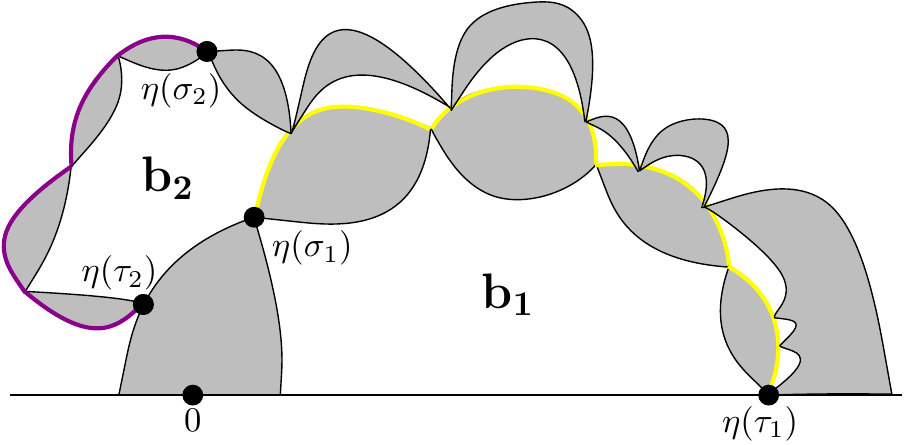}
\caption{The first two bubbles in the path of bubbles defined in the proof of the first half of Proposition \ref{reducingtoasinglebubble}.  The curve $\eta$ on the interval $[0,\tau_2]$ is contained in the regions shaded in gray. The cut point at time $\sigma_1$ corresponds to the edge of the adjacency graph connecting the bubbles $b_1$ and $b_2$. The random variables $X_1$ and $X_2$ defined in~\eqref{eqn-tauinc} give the $\gamma$-LQG lengths of the yellow and purple arcs, respectively.}
\label{pathbubblefigure}
\end{figure}

To prove Theorems \ref{path} and \ref{largekappa}, we first reduce the task of proving the existence or nonexistence of an $(L,R)$-Markovian path to infinity (Definition~\ref{def-markovpath}) to computing an expectation involving a single bubble. 
We first introduce some notation that we will use repeatedly throughout the paper. 

\begin{notation}
For a time $t>0$, we denote by $\sigma(t)$ the smallest $s \in [0,t)$ such that $L_r \geq L_s$ and $R_r \geq R_s$ for all $r \in [s,t)$; or $\sigma(t) = t$ if no such $s$ exists.
\label{sigmanot}
\end{notation} 

We observe that if $\sigma(t) < t$, then $\sigma(t)$ is a cut time for $\eta$ by Example~\ref{fourthexample}, so lies on the boundary of two distinct bubbles formed by $\eta$ by Lemma~\ref{lem-cut-pts}. 

\begin{remark}
Example~\ref{fourthexample} shows that $\sigma(t)$ can equivalently be defined as the smallest $s \in [0,t)$ for which $\eta([0,s)) \cap \eta([s,t)) = \emptyset$ and $\eta([s,t))\cap \BB R = \emptyset$. For a fixed time $t$, the left and right outer boundaries of $\eta([0,t])$ are SLE$_{16/\kappa}$-type curves which a.s.\ intersect each other in every neighborhood of their common starting point: see, e.g.,~\cite{ig4}. Consequently, the description of $\sigma(t)$ in terms of $\eta$ shows that a.s.\ $\sigma(t) < t$. We will not need this fact in our proof, however. One can similarly see from SLE considerations that a.s.\ $\sigma(\tau) < \tau$ if $\tau$ is the first time that $R$ jumps below a specified level, equivalently, the first time that $\eta$ disconnects a certain point of $(0,\infty)$ from $\infty$ (here is is important that we use $[0,t)$ instead of $[0,t]$ in the definition of $\sigma(t)$, since otherwise we would get $\sigma(\tau) = \tau$). As a consequence of Theorem~\ref{onebubble} below, we will obtain a direct proof is this fact which does not use SLE, at least in the case when $\kappa \in (4,\kappa_1]$. 
\end{remark}

\begin{prop}
Let $\kappa \in (4,8)$ and let $\eta$ and $(L,R)$ be as above. Let $\tau$ be the first time that $R$ jumps below $-1$ and let $\sigma = \sigma(\tau)$ (see Notation \ref{sigmanot}).  Equivalently (as noted in Example \ref{examples}), let $\tau$ be the first time that $\eta$ absorbs the point on the positive real axis at $\gamma$-LQG length $1$ from the origin, and let $\sigma$ be the time of the first cut point of $\eta|_{[0,\tau]}$ which lies on the boundary of a bubble of $\eta$ formed after time $\tau$.  If 
\[
\BB{E} \log(L_{\tau} - L_{\sigma}) \geq 0,
\]
then for each stopping time $\zeta$ for $(L,R)$ at which $\eta$ forms a bubble almost surely, there is an $(L,R)$-Markovian path to infinity with $\tau_1 = \zeta$. 

Conversely, let $\mathcal{M}$ denote the set of times in $[0,\tau]$ at which $L$ achieves a record minimum, and suppose that
\begin{equation}
\BB{E} \log\left( \sup_{t \in \mathcal{M}} (L_t - L_{\sigma(t)}) \right) < 0.
\label{converse}
\end{equation}
Then the adjacency graph of bubbles of $\eta$ does not admit an $(L,R)$-Markovian path to infinity. 
\label{reducingtoasinglebubble}
\end{prop}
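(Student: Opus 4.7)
The plan is to treat both directions via a scale-invariant multiplicative random walk on successive bubbles. Given a stopping time $\zeta$ at which $\eta$ forms a bubble, I construct a candidate $(L,R)$-Markovian path $(\tau_k, b_k)$ inductively: by left--right symmetry assume $b_1$ is a right bubble, set $\sigma_k := \sigma(\tau_k)$, which by Example~\ref{fourthexample} and Lemma~\ref{lem-cut-pts} is a cut time on $\bdy b_k$, and let $\tau_{k+1}$ be the first time strictly after $\tau_k$ at which $L$ (respectively $R$) jumps below $L_{\sigma_k}$ (respectively $R_{\sigma_k}$), alternating by the parity of $k$. By Example~\ref{fourthexample} this produces a bubble $b_{k+1}$ adjacent to $b_k$ via the cut point $\sigma_k$, and each $\tau_k$ is manifestly a stopping time of $(L,R)$.

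Next I introduce a natural scale, $M_k := L_{\tau_k} - L_{\sigma_k}$ when $b_k$ is a right bubble and $M_k := R_{\tau_k} - R_{\sigma_k}$ when $b_k$ is a left bubble, so $M_k \geq 0$ measures the ascent of the transverse coordinate during the excursion ending at $\tau_k$. The strong Markov property of $(L,R)$ at $\tau_k$, combined with $\kappa/4$-stable self-similarity (under which space scales as $c$ and time as $c^{\kappa/4}$), shows that, conditional on $\mcl F_{\tau_k}$, the ratio $M_{k+1}/M_k$ has the same distribution as $L_\tau - L_\sigma$ in the proposition's statement, while $\tau_{k+1} - \tau_k$ is distributed as $M_k^{\kappa/4}$ times an independent copy of $\tau$. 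Hence the increments of $\log M_k$ form an i.i.d.\ sequence with mean $\mu := \BB E \log(L_\tau - L_\sigma)$.

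In the forward direction ($\mu \geq 0$), the random walk $\log M_k$ does not drift to $-\infty$: if $\mu > 0$ it tends to $+\infty$, and if $\mu = 0$ it is recurrent. In either case almost surely infinitely many $k$ satisfy $M_k \geq 1$, so the corresponding increments $\tau_{k+1} - \tau_k$ stochastically dominate a fixed positive independent random variable, forcing $\tau_k \to \infty$ and realizing an $(L,R)$-Markovian path to infinity. For the converse, given any Markovian strategy, at each step some cut point on $\bdy b_k$ must be selected. By the strong Markov property and stable scaling, the resulting ratio $M_{k+1}/M_k$ is stochastically dominated by $\sup_{t \in \mcl M}(L_t - L_{\sigma(t)})$ computed on an independent rescaled copy of $(L,R)$, since (using left--right symmetry) the admissible cut points correspond to a subset of the running minima in $\mcl M$ for that rescaled copy. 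The hypothesis together with the strong law of large numbers then yields $\log M_k \to -\infty$ at a strictly positive geometric rate, so $\sum_k M_k^{\kappa/4} < \infty$ almost surely and $\tau_k$ has a finite limit, contradicting $\tau_k \to \infty$.

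The principal technical obstacle is the scale-invariant domination used in the converse: one must argue that every valid next-bubble choice made in a Markovian fashion corresponds, after rescaling, to a running minimum of $L$ for an independent stable process, so that the explicit supremum $\sup_{t \in \mcl M}(L_t - L_{\sigma(t)})$ uniformly controls the achievable multiplicative growth $M_{k+1}/M_k$. A secondary delicate point is the critical case $\mu = 0$ in the forward direction, where recurrence (rather than transience) of $\log M_k$ must be leveraged to guarantee divergence of $\tau_k$.
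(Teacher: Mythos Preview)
Your forward-direction construction and the i.i.d.\ multiplicative structure of the $M_k$ match the paper's argument essentially verbatim; the paper concludes $\tau_k\to\infty$ slightly differently (from $\limsup_k X_k=\infty$ together with the trivial boundedness of $(L,R)$ on compact time intervals), but your increment argument is equally valid once made precise via the second conditional Borel--Cantelli lemma.

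The converse is where your sketch diverges from the paper, and where it has a real gap. You correctly obtain the stochastic domination of $M_{k+1}/M_k$ by $\sup_{t\in\mcl M}(L_t-L_{\sigma(t)})$ and hence $M_k\to 0$ geometrically. But the implication ``$\sum_k M_k^{\kappa/4}<\infty$, so $\tau_k$ has a finite limit'' is unjustified as written. For an \emph{arbitrary} Markovian path, $\tau_{k+1}-\tau_k$ is not distributed as $M_k^{\kappa/4}$ times an independent copy of $\tau$; that identity holds only for the specific path you built in the forward direction. What is true is the one-sided bound $\tau_{k+1}\le \rho_k$, where $\rho_k$ is the first time after $\tau_k$ that the transverse coordinate drops below its value at $\sigma_k$ (since every bubble adjacent to $b_k$ and formed after $\tau_k$ is formed by time $\rho_k$). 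Then $\rho_k-\tau_k$ is conditionally distributed as $M_k^{\kappa/4}$ times a copy of $\tau$, and one can finish via conditional Borel--Cantelli using a polynomial tail bound on $\tau$. None of this is in your sketch, and it is precisely the step the paper flags as nontrivial.

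The paper takes a different route here: rather than bounding time increments directly, it proves a separate lemma (Lemma~\ref{cutptmacro}) asserting that $\eta$ a.s.\ has infinitely many global cut points whose two incident bubbles both have $X$-value bounded below by a fixed constant $C$. Any path to infinity must pass through one of the two bubbles at each such global cut point, so $X_{k_n}\ge C$ along a subsequence, contradicting $X_k\to 0$. Your approach, once completed as above, avoids this lemma and is arguably more self-contained; the paper's approach avoids tail estimates on $\tau$ but requires the separate geometric input about macroscopic global cut points. Also note that you have mis-identified the ``principal technical obstacle'': the stochastic domination step is handled in one paragraph in the paper, whereas ruling out $\tau_k\to\infty$ from $M_k\to 0$ is where the real work lies.
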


\begin{remark} \label{remark-sim}
It should be possible to estimate the values of $\kappa$ for which each of the conditions of Proposition~\ref{reducingtoasinglebubble} holds by simulating stable processes numerically. 
However, the times $\sigma(t)$ of Notation~\ref{sigmanot} are \emph{not} continuous functionals of $(L,R)$ with respect to the Skorohod topology. We expect that these times still converge for suitable approximations of $(L,R)$ (see~\cite[Section 1.5]{gms-burger-cone} for related discussion concerning the analogous times for correlated Brownian motions), but the rate of convergence is likely rather slow, which may complicate attempts at simulations. 
\end{remark}

\begin{proof}[Proof of Proposition~\ref{reducingtoasinglebubble}]
First, suppose that $\BB{E} \log(L_{\tau} - L_{\sigma}) > 0$ and suppose we are given a stopping time $\zeta$ for $(L,R)$ at which $\eta$ a.s.\ forms a bubble. We will construct a sequence of stopping times $\zeta = \tau_1 < \tau_2 < \tau_3 < \cdots$ of $(L,R)$ that constitute an $(L,R)$-Markovian path to infinity. We set $\tau_1=\zeta$. We then define the times $\tau_k$ for $k \geq 2$ inductively as follows.  Suppose that we have defined the time $\tau_k$, and that $\eta$ forms a bubble $b_k$ at time $\tau_k$; then we set $\sigma_k = \sigma(\tau_k)$ and
\eqbn
\tau_{k+1} := 
 \begin{cases}
 \inf\{ s > \tau_k : R_s   <   R_{\sigma_k} \} ,\quad &\text{if $b_k$ is a left bubble} \\
 \inf\{ s > \tau_k : L_s  <   L_{\sigma_k} \} ,\quad &\text{if $b_k$ is a right bubble} .
 \end{cases}
\eqen
Equivalently, by Examples~\ref{examples} and~\ref{fourthexample}, $\sigma_k$ is the time of the first cut point of $\eta|_{[0,\tau_k]}$ on the boundary of $b_k$ which also lies on the boundary of a bubble formed after $b_k$, and we choose the next bubble $b_{k+1}$ to be the bubble (other than $b_k$) which has $\eta(\sigma_k)$ on its boundary.   See Figure \ref{pathbubblefigure}.

By definition, $\eta$ forms a bubble at each time $\tau_k$, and the bubbles formed at times $\tau_k$ and $\tau_{k+1}$ are adjacent for each $k$. So, to prove $\tau_1 < \tau_2 < \tau_3 < \cdots$ is an $(L,R)$-Markovian path to infinity, we just need to check that $\tau_k \rightarrow \infty$ almost surely as $k \rightarrow \infty$.  Set
\eqb \label{eqn-tauinc}
X_k := \begin{cases}
 R_{\tau_k} - R_{\sigma_k} &  \text{if $b_k$ is a left bubble} \\
 L_{\tau_k} - L_{\sigma_k} &  \text{if $b_k$ is a right bubble} .
 \end{cases}
\eqe 
If $b_k$ is a right bubble, then by definition $\tau_{k+1}$ is the first time after $\tau_k$ that $L  - L_{\tau_k}$ jumps below $-X_k$.  The same is true if $b_k$ is a left bubble with $L$ replaced by $R$. Hence $X_{k+1}/X_k$ is obtained from the process $X_k^{-1} (L_{\tau_k + \cdot} -L_{\tau_k} , R_{\tau_k + \cdot} - R_{\tau_k})$ in the same manner that $L_\tau-L_\sigma$ is obtained from $(L,R)$, except possibly with the roles of $L$ and $R$ interchanged.  By the strong Markov property, the $\kappa/4$-stable scaling property of $L$ and $R$, and the symmetry between $L$ and $R$, the random variables $X_{k+1}/X_k$ for $k\in\BB N$ are i.i.d., with the same law as $L_\tau-L_\sigma$. 
If $\BB{E} \log(L_{\tau} - L_{\sigma}) > 0$, then the strong law of large numbers implies that a.s.\ $\limsup_{k\rta\infty} \sum_{j=1}^k \log(X_{j+1}/X_j) = \infty$ and therefore $\limsup_{k\rta\infty} X_k = \infty$. 
If $\BB E \log(L_\tau - L_\sigma) = 0$, we again get that a.s.\ $\limsup_{k\rta\infty} \sum_{j=1}^k \log(X_{j+1}/X_j) = \infty$ as follows.
By the Hewitt-Savage zero-one law, the random variable $\limsup_{k\rta\infty} \sum_{j=1}^k \log(X_{j+1}/X_j)$ is a.s.\ equal to a deterministic constant $c\in [-\infty,\infty]$. Since a.s.\ $\limsup_{k\rta\infty} \sum_{j=1}^k \log(X_{j+1}/X_j) = c$, we get that a.s.\ $c - \log(X_2/X_1) = c$. Therefore $c \in \{-\infty, \infty\}$.
By the Chung-Fuchs theorem (see, e.g.,~\cite[Theorem 4.2.7]{durrett}), a.s.\ there are infinitely many $k\in\BB N$ for which $\sum_{j=1}^k \log(X_{j+1}/X_j) > 0$, so we must have $c = \infty$. 
Since $\max_{s \in [0,t]} (|L_s| + |R_s|) < \infty$ for each $t > 0$, this implies that a.s.\ $\tau_k\rta\infty$ as $k\rta\infty$ provided $\BB E \log(L_\tau - L_\sigma) \geq 0$. 

Conversely, suppose that~\eqref{converse} holds. Let $\tau_1 < \tau_2  < \tau_3 < \cdots$ be a sequence of stopping times of $(L,R)$ with $\eta = \tau_1$, such that $\eta$ a.s.\ forms a bubble $b_k$ at each time $\tau_k$, and $b_k$ and $b_{k+1}$ are connected in the adjacency graph for each $k$. 

We claim that $\tau_k$ almost surely does not tend to infinity as $k \rightarrow \infty$.  To prove this claim, we first set  $\sigma_k = \sigma(\tau_k)$
and define $X_k$ as in~\eqref{eqn-tauinc}. For each $k\in\BB N$, $\tau_{k+1}$ is a stopping time greater than $\tau_k$ such that, at time $\tau_{k+1}$, the curve $\eta$ a.s.\ forms a bubble whose boundary shares a cut point with $b_k$.  By Example~\ref{examples}, we can characterize $\tau_{k+1}$ in terms of $(L,R)$ as follows: if $b_k$ is a right bubble, then at time $\tau_{k+1}$, $L_t$ a.s.\ jumps below $-x$ for some random $x\in [L_{\sigma_k}  ,  L_{\tau_k}]$ for the first time after $\tau_k$ (in the special case that $x = L_{\sigma_k}$ almost surely,  the bubble $b_{k+1}$ is the bubble with the cut point $\eta(\sigma_k)$ on its boundary). Equivalently, the process $t\mapsto L_t - L_{\tau_k}$ defined for $t>\tau_k$ achieves a record minimum at $t= \tau_{k+1}$, and $\min_{t\in [0,\tau_{k+1})} (L_t - L_{\tau_k}) \geq -X_k$.
The same is true if $k$ is a left bubble with $L$ replaced by $R$. We deduce from the scaling and Markov properties of $L$ and $R$ that $X_{k+1}/X_k$ is stochastically dominated by  $\sup_{t \in \mathcal{M}} (L_t - L_{\sigma(t)})$. 
Since~\eqref{converse} holds,  the strong law of large numbers implies that a.s.\ $\lim_{k\rta\infty} \sum_{j=1}^k \log(X_{j+1}/X_j) = -\infty$ and therefore that $\lim_{k\rta\infty} X_k = 0$.  

Now, unlike in the first part of the proof, we cannot immediately conclude that  $\tau_k$ almost surely does not tend to infinity as $k \rightarrow \infty$.  The statement $X_k \stackrel{a.s.}{\rightarrow} 0$ says that some measure of boundary length of the bubbles $b_k$ is tending to zero; we want to deduce from this that the path of bubbles must remain in some compact subset of $\BB H$.  

To see this, we observe that Example~\ref{fourthexample} implies that on the event that $\tau_k \rightarrow \infty$, 
it must be the case that for each global cut point $t$ of $\eta$ with $t\geq \tau_1$, 
the sequence of bubbles $\{b_k\}_{k\in\BB N}$ must include one of the bubble with $\eta(t)$ on its boundary. By Lemma \ref{cutptmacro} below,  we can choose a subsequence of bubbles $b_{k_n}$ such that the corresponding random variables $X_{k_n}$ are uniformly bounded from below.  Since $X_{k_n} \rightarrow 0$ almost surely, we deduce that $\tau_k$ almost surely does not tend to infinity, as desired.
\end{proof}

We now state and prove Lemma \ref{cutptmacro}, the missing ingredient we needed to prove Proposition \ref{reducingtoasinglebubble}.

\begin{lem}
Let $\eta$ be an SLE$_{\kappa}$ curve for $\kappa \in (4,8)$. There is a deterministic constant $C>0$ such that a.s.\ there are infinitely many global cut points of $\eta$ such that, if $\tau_l$ and $\tau_r$ are the times $\eta$ forms the left and right bubbles whose boundaries share this cut point, then
\begin{equation}
 \left( R_{\tau_l} - R_{\sigma(\tau_l)} \right) \wedge \left(  L_{\tau_r} - L_{\sigma(\tau_r)} \right) \geq C . 
\label{cutptmacroequation}
\end{equation} 
\label{cutptmacro}
\end{lem}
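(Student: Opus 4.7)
The plan is to exploit the scale invariance and regenerative structure of $(L,R)$ at global cut times of $\eta$, combined with the fact (cited in the preceding proof from~\cite{miller-wu-dim}) that $\eta$ almost surely has infinitely many global cut times $T_1 < T_2 < \cdots \to \infty$. The main theorem of~\cite{wedges} that cutting a $\frac{4}{\gamma}-\frac{\gamma}{2}$-quantum wedge at a cut point produces two independent wedges of the same type implies that, at each $T_n$, the shifted process $(L_{T_n+s} - L_{T_n}, R_{T_n+s} - R_{T_n})_{s \geq 0}$ is a fresh, independent copy of $(L,R)$ starting from $(0,0)$, independent of the past up to $T_n$. In these shifted coordinates, $\tau_l(T_n)$ and $\tau_r(T_n)$ become $\tau_L := \inf\{s > 0 : L_s < 0\}$ and $\tau_R := \inf\{s > 0 : R_s < 0\}$, and $\sigma(\tau_l(T_n)), \sigma(\tau_r(T_n))$ transform accordingly.

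The central step is to exhibit deterministic $C, p > 0$ and a favorable event $E$ of probability $\geq p$ for the fresh pair on which both $R_{\tau_L} - R_{\sigma(\tau_L)} \geq C$ and $L_{\tau_R} - L_{\sigma(\tau_R)} \geq C$. By symmetry I may condition on $\tau_L < \tau_R$. On $E$ I will enforce, for a fixed large $T > 0$: (i) $L, R \geq 0$ on $[0, \tau_L]$ with $R_{\tau_L} \in [5C, 10C]$; (ii) $L$ has a downward jump at $\tau_L \in (T, T+1)$ with $L_{\tau_L} \leq -5C$; (iii) on $[\tau_L, \tau_R)$, $L$ stays $\geq L_{\tau_L}$ and $L_{\tau_R} \in [C, 10C]$; (iv) on $[\tau_L, \tau_R)$, $R$ stays $\geq R_{\tau_L}$, and the first passage of $R$ strictly below $R_{\tau_L}$---which is $\tau_R$---also takes $R$ below $0$. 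Each of (i)--(iv) is a positive-probability constraint on $L$ or $R$ individually, so by the independence $L \perp R$ and standard support properties of totally asymmetric $\kappa/4$-stable processes, $p := P(E) > 0$.

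On $E$, a direct application of Notation~\ref{sigmanot} gives $\sigma(\tau_L) = 0$ (since $L \geq 0 = L_0$ and $R \geq 0 = R_0$ on $[0, \tau_L)$), so $R_{\tau_L} - R_{\sigma(\tau_L)} = R_{\tau_L} \geq 5C$. Moreover $\sigma(\tau_R) = \tau_L$: no $s < \tau_L$ satisfies the $\sigma(\tau_R)$ condition since $L_{\tau_L} < 0 \leq L_s$ violates $L_r \geq L_s$ at $r = \tau_L$, while $s = \tau_L$ is valid by (iii) and (iv). Therefore $L_{\tau_R} - L_{\sigma(\tau_R)} = L_{\tau_R} - L_{\tau_L} \geq 6C$. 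Applying the strong Markov property at each $T_n$ shows that the analogous event $E_n$ at $T_n$ satisfies $P(E_n \mid \mathcal{F}_{T_n}) \geq p$ almost surely; L\'evy's extension of the Borel-Cantelli lemma (see~\cite{durrett}) then yields that a.s.\ infinitely many $E_n$ occur, each producing a global cut time of $\eta$ satisfying~\eqref{cutptmacroequation}.

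The hardest part will be arranging condition (iv), which couples $L$ and $R$ through the random value $R_{\tau_L}$: we need $R$'s first passage below the random level $R_{\tau_L}$ to coincide with its first passage below $0$, while keeping $R_{\tau_L}$ in a prescribed range. The independence $L \perp R$ makes $R_{\tau_L}$ conditionally independent of $R$'s later evolution given its value at $\tau_L$, which is what enables the construction via a suitable conditioning on the behavior of $R$ over a fixed time window $[0, T+3]$ containing $\tau_L$ and $\tau_R$. A secondary point requiring care is verifying that the regenerative structure of~\cite{wedges} indeed yields the desired independent fresh $(L,R)$ at each global cut time, and that the subsequence-independence needed for Borel-Cantelli can be arranged.
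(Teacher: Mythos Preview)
Your overall plan---find a favorable event of positive probability for the pair of bubbles at a global cut point, then apply a Borel--Cantelli type argument along a sequence of global cut times---is sound in spirit, and your construction of the event $E$ and the verification that $\sigma(\tau_L)=0$, $\sigma(\tau_R)=\tau_L$ on $E$ are both correct. However, the step you label as ``secondary'' is in fact the central obstacle, and as written there is a real gap.

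The problem is that global cut times of $\eta$ are \emph{not} stopping times for $(L,R)$, and the shifted process $(L_{T_n+\cdot}-L_{T_n},R_{T_n+\cdot}-R_{T_n})$ is \emph{not} an unconditioned copy of $(L,R)$. There is a dichotomy. If you parametrize the regenerative set of simultaneous running minima by local time and take $T^{(u)}$ to be the inverse local time at level $u$, then $T^{(u)}$ \emph{is} a stopping time and the shifted process \emph{is} fresh---but then, since a spectrally negative $\kappa/4$-stable process is regular for $(-\infty,0)$, your $\tau_L=\inf\{s>0:L^{\mathrm{fresh}}_s<0\}=0$ a.s., so the associated bubble is degenerate and the event $E$ has probability zero. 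If instead you take $T_n$ to be a genuine global cut time of $\eta$ (say, the first one after a given time), then by Example~\ref{fourthexample} this forces the shifted process to begin with a nontrivial excursion above $(0,0)$; so $\tau_L>0$, but now $T_n$ depends on the future and the shifted process is \emph{conditioned} to start with such an excursion, not a free copy of $(L,R)$. In either case your Markov/Borel--Cantelli step, which requires $\mathbb P(E_n\mid\mathcal F_{T_n})\ge p$ for a fresh copy, breaks down. The wedge-splitting statement you cite from~\cite{wedges} does not repair this: it gives the law of the two pieces when one cuts at a point selected by quantum boundary length or area, not at a cut time singled out by the future of $\eta$, and it does not put a strong Markov structure on the sequence $(T_n)$ that you need.

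The paper's proof circumvents exactly this difficulty. Rather than working at the global cut times themselves, it introduces genuine $(L,R)$-stopping times $r_k<t_k$ that bracket the $k$-th global cut time $s_k$: one first waits until $R$ is at a running minimum with $L$ at least one unit above its running minimum (this is $r_k$), then takes $s_k$ to be the next global cut time and $t_k$ the moment both bubbles at $s_k$ have been formed (which is $\mathcal F$-measurable by then). The strong Markov property is applied at the stopping times $r_k$, and the condition $L_{r_k}-\min_{[0,r_k]}L\ge 1$ combined with scaling gives stochastic domination of the quantity in~\eqref{cutptmacroequation} by a fixed positive random variable, uniformly in $k$. A $0$--$1$ law then finishes. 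No favorable-event construction or wedge decomposition is needed. Your approach could likely be salvaged by adopting this device (use $t_k$ as the regeneration times and build $E$ for the post-$t_k$ process), but as it stands the argument is incomplete.
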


\begin{proof}
We define times 
\[ r_1 < s_1 < t_1 < r_2 < s_2 < t_2 < r_3 < \cdots \]
inductively as follows. Set $r_0 = s_0 = t_0 = 0$. Inductively, let $r_k$ be the first time $t>t_{k-1}$ such that $R$ attains a running minimum at $t$ and $L_t - \min_{s \in [0,t]} L_s \geq 1$.\footnote{It is not hard to see that such a time always exists: Since the running minimum process of $R$ is a subordinator (Lemma VIII.1 on page 218 of~\cite{bertoin-book}), we can find infinitely many disjoint time intervals that are uniformly large (by the regenerative property of subordinators) and whose endpoints are times at which $R$ attains running minima. The restrictions of $L$  to these time intervals are conditionally independent given $R$, so the 0-1 law implies that, on at least one of these time intervals, the value of $L$ at the right endpoint of the interval will exceed its minimum on that interval by at least one.} 
Let $s_k$ the first global cut time of $\eta$ after time $r_k$; such a cut time exists a.s.\ since $\eta$ a.s.\ has arbitrarily large global cut times (see, e.g.\,~\cite[Theorem 1.2]{miller-wu-dim}). Finally, let 
\[ t_k = \inf\{ t > s_k : L_t < L_{s_k} \} \vee \inf\{ t > s_k : R_t < R_{s_k} \}, \]
\textit{i.e.}, $t_k$ is the larger of the two times at which $\eta$ forms a bubble whose boundary contains the cut point $\eta(t_k)$.  

Using Example~\ref{fourthexample}, each $r_k$ and each $t_k$ is a stopping time for $(L,R)$. By Example~\ref{examples} the random variable of~\eqref{cutptmacroequation} associated to the cut point $s_k$ is a.s.\ determined by $(L,R)|_{[0,t_k]}$. 

We claim that the sequence $\{s_k - r_k\}_{k\in\BB N}$ stochastically dominates an i.i.d.\ sequence of random variables. If we can prove this claim, then the lemma will follow directly from applying Kolmorogorv's 0-1 law.  To show why this claim is true, we first recall how we defined global cut times in terms of $(L,R)$ in Example \ref{fourthexample}. In our setting, since $r_k$ is a stopping time, we can similarly characterize the conditional distribution of $s_k - r_k$ given $L|_{[0,r_k]}$: the law of $s_k - r_k$ is equal to the law of the first global cut time of $(L,R)$ such that the record minimum that $L$ achieves at the first time $\eta$ hits $(-\infty,0]$ after this global cut time is $\leq  L_{r_k} - \min_{s \in [0,r_k]} L_s   $.    Since $L_{r_k} - \min_{s \in [0 ,r_k]} L_s \geq 1$, we deduce by the scaling property of $(L,R)$ that the random variable of~\eqref{cutptmacroequation} associated to the cut point $s_k$ stochastically dominates an a.s.\ positive random variable defined independently of $k$, namely, the random variable~\eqref{cutptmacroequation} associated to the first global cut time of $(L,R)$ such that the record minimum that $L$ achieves after this global cut time is $\leq -1$. This proves our claim, and hence the lemma.
\end{proof}

Proposition \ref{reducingtoasinglebubble} implies that, to prove Theorems \ref{path} and \ref{largekappa}, it is enough to prove the following estimates for a single bubble of an SLE$_{\kappa}$ curve:

\begin{thm}
Fix $\kappa \in (4,\kappa_0]$, where $\kappa_0 \approx  5.6158$ is defined as in Theorem \ref{main}.  Let $\tau$ be the first time that $R$ jumps below $-1$ and $\sigma = \sigma(\tau)$.  Then $\BB{E} \log(L_{\tau} - L_{\sigma}) \geq 0$.
\label{onebubble}
\end{thm}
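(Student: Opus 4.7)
The plan is to compute $\BB E \log(L_\tau - L_\sigma)$ explicitly via a meander decomposition for $L$ at the joint minimum time $\sigma$, then to combine this with fluctuation-theoretic formulas for stable L\'evy processes and verify that the resulting expression is nonnegative precisely on $(4,\kappa_0]$.

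The first step is a path decomposition at the joint minimum. By the definition of $\sigma = \sigma(\tau)$, both $L_\sigma = \inf_{r \in [\sigma, \tau)} L_r$ and $R_\sigma = \inf_{r \in [\sigma, \tau)} R_r$ hold, so that conditional on $(\sigma, \tau)$ the shifted process $L - L_\sigma$ on $[\sigma, \tau]$ is a $\kappa/4$-stable L\'evy process conditioned to stay nonnegative on $(0, \tau - \sigma)$, i.e., a $\kappa/4$-stable meander of duration $\tau - \sigma$. (The conditioning event ``$\sigma$ is the smallest admissible time'' is a constraint on the past of $(L,R)$, and so by the Markov property does not affect the conditional law of the post-$\sigma$ process.) This is a bivariate analogue of Chaumont's decomposition at the minimum of a L\'evy process, which is valid here because $L$ and $R$ are independent. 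By the self-similarity of stable meanders,
\[
L_\tau - L_\sigma \eqD (\tau - \sigma)^{4/\kappa}\, M,
\]
where $M$ is the endpoint of a standard (length-$1$) $\kappa/4$-stable meander, independent of $\tau - \sigma$. Therefore
\[
\BB E \log(L_\tau - L_\sigma) \,=\, \frac{4}{\kappa}\BB E \log(\tau - \sigma) + \BB E \log M.
\]

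The second step evaluates $\BB E \log M$ and $\BB E \log(\tau - \sigma)$ via Mellin transforms. The Mellin transform of the endpoint of a spectrally negative $\kappa/4$-stable meander is an explicit ratio of Gamma functions at arguments involving $s$, $\kappa/4$, and $1 - \kappa/4$ (the positivity parameter of $L$ being $\rho = 1 - 4/\kappa$); differentiating at $s = 0$ expresses $\BB E \log M$ as a combination of $\psi(\kappa/4)$, $\psi(1-\kappa/4)$, and $\psi(1)$, together with a scaling constant that depends on the normalization of $L$. The more delicate quantity $\BB E \log(\tau - \sigma)$ is the joint descending ladder length of the pair $(L, R)$ at the $R$-first-passage time $\tau$; its Mellin transform is computed by combining the generalized arcsine (Dynkin-Lamperti) law for stable processes, Bingham's formula for the Mellin transform of first-passage times of spectrally negative stable processes, and the Bertoin-Savov duality together with Chaumont's decomposition (all cited in Section~\ref{sec-outline}), producing a further combination of digamma evaluations and a matching scaling constant.

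Combining the two contributions and simplifying using the digamma reflection formula $\psi(1-x) - \psi(x) = \pi \cot(\pi x)$ and the recursion $\psi(x+1) = \psi(x) + 1/x$, all scaling constants cancel and one obtains
\[
\BB E \log(L_\tau - L_\sigma) \,=\, \frac{4}{\kappa}\left[\pi \cot\!\left(\frac{\pi \kappa}{4}\right) + \psi\!\left(2 - \frac{\kappa}{4}\right) - \psi(1)\right].
\]
Since $4/\kappa > 0$, nonnegativity reduces to that of the bracketed expression, which tends to $+\infty$ as $\kappa \to 4^+$ (through the pole of $\cot$), is strictly decreasing on $(4,8)$, and becomes negative before $\kappa = 8$, producing a unique zero at $\kappa_0 \approx 5.6158 \in (4, 8)$ matching the definition in Theorem~\ref{main}. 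The hardest part will be the explicit computation of $\BB E \log(\tau - \sigma)$: while the ingredient identities are standard in the L\'evy process literature, combining them to produce the clean digamma expression requires careful bookkeeping of the joint ladder structure of two independent stable processes through a first-passage time, and in particular verifying that the various scaling constants cancel exactly.
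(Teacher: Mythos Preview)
Your first step is correct in substance but not in justification. The claim that, conditionally on $(\sigma,\tau)$, the process $L_{\sigma+\cdot}-L_\sigma$ on $[0,\tau-\sigma)$ is a stable meander of length $\tau-\sigma$ is exactly the content of the paper's Lemma~\ref{weight-lemma}. But your argument for it is wrong: $\sigma$ is not a stopping time for $(L,R)$ (it looks into the future up to $\tau$), so you cannot invoke the Markov property at $\sigma$. The paper establishes this meander identification by a discrete approximation and a careful limit (Lemmas~\ref{lem-cond-law-conv} and~\ref{meander}); this is precisely the technical point flagged in Remark~\ref{remark-literature}.

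The real gap is that you have not computed $\BB E\log(\tau-\sigma)$. You name several ingredients (arcsine law, Bingham's formula, Bertoin--Savov duality, Chaumont's decomposition) but give no indication of how they combine, and you yourself call this ``the hardest part''. The quantity $\tau-\sigma$ involves the joint descending-ladder structure of two independent stable processes at a first-passage time of one of them; none of the cited results addresses that object, and the paper explicitly avoids it.

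Moreover, your asserted final formula is wrong. The paper proves (Proposition~\ref{stochasticdomination}, via a Harris inequality comparing the unweighted meander for $L$ with the strictly-decreasingly weighted meander for $R$ in Lemma~\ref{weight-lemma}) that $L_\tau-L_\sigma$ strictly stochastically dominates $R_{\tau^-}-R_\sigma$, and then computes exactly
\[
\BB E\log(R_{\tau^-}-R_\sigma)\;=\;\pi\cot(\pi\kappa/4)+\psi(2-\kappa/4)-\psi(1)
\]
via Corollary~\ref{reversedLevyprocessescorollary} and Lemmas~\ref{lem-term1}--\ref{lem-term2}. For any $\kappa\in(4,\kappa_0)$ this bracket is positive, so your formula would give
\[
\BB E\log(L_\tau-L_\sigma)=\tfrac{4}{\kappa}\bigl[\pi\cot(\pi\kappa/4)+\psi(2-\kappa/4)-\psi(1)\bigr]<\BB E\log(R_{\tau^-}-R_\sigma)<\BB E\log(L_\tau-L_\sigma),
\]
a contradiction. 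The prefactor $4/\kappa$ has no legitimate origin; the formula appears to have been reverse-engineered from the definition of $\kappa_0$ in Theorem~\ref{main}.

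The paper's route is genuinely different from yours: it does \emph{not} compute $\BB E\log(L_\tau-L_\sigma)$ at all. Instead it bounds it below by $\BB E\log(R_{\tau^-}-R_\sigma)$ using the stochastic domination of Section~\ref{sec-dominate}, and then evaluates the latter by time-reversing $(L,R)$ on $[\xi,\tau]$ (Proposition~\ref{reversedLevyprocesseslaw}) to reduce to an arcsine law for the $(2-\kappa/4)$-stable subordinator of simultaneous running suprema (Lemma~\ref{lem-arcsine}), together with an explicit undershoot density from~\cite{dk-overshoot} (Lemma~\ref{lem-term1}). This route is designed precisely to sidestep the quantity $\tau-\sigma$ that you leave uncomputed.
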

 
\begin{thm}
There exists $\kappa_1 \in (\kappa_0 , 8)$ such that for $\kappa \in [\kappa_1,8)$, the following is true. 
Let $\mathcal{M}$ denote the set of times $\leq \tau$ at which $L$ achieves a record minimum. Then
\[
\BB{E} \log\left( \sup_{t \in \mathcal{M}} (L_t - L_{\sigma(t)}) \right) < 0.
\]
\label{onebubbleconverse}
\end{thm}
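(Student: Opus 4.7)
The strategy is to exploit the convergence (Proposition~\ref{convtoBM}) of $(L,R)$ to a pair of independent standard Brownian motions as $\kappa \to 8$, combined with the classical fact that planar Brownian motion has no cone points of angle $\pi/2$: for a pair $(B^1,B^2)$ of independent standard BMs, there almost surely does not exist $0 < s < t$ with $B^i_r \geq B^i_s$ for $i=1,2$ and all $r \in [s,t]$. In the Brownian limit, one has $\sigma(t) = t$ for all $t > 0$, so that $L_t - L_{\sigma(t)}$ vanishes identically. The goal is to transfer this vanishing to the $\kappa/4$-stable setting for $\kappa$ close to $8$ and then deduce the required bound on the log-expectation.

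The main step is to show $\sup_{t \in \mathcal{M}}(L_t - L_{\sigma(t)}) \to 0$ in probability as $\kappa \uparrow 8$. The obstacle, as flagged in Remark~\ref{remark-sim}, is that $\sigma(\cdot)$ is not continuous on Skorohod space, so a direct application of Proposition~\ref{convtoBM} is unavailable. I would circumvent this by replacing the sup with a Skorohod-upper-semicontinuous majorant: for each $\delta > 0$, let $S_\delta$ be the supremum of $|L_t - L_s|$ over all pairs $0 \leq s < t \leq \tau$ satisfying the $\delta$-relaxed cone condition $L_r \geq L_s - \delta$ and $R_r \geq R_s - \delta$ for every $r \in [s,t]$. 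The functional $S_\delta$ is upper semicontinuous in the Skorohod $J_1$ topology, and taking $s = \sigma(t)$ gives $\sup_{t \in \mathcal{M}}(L_t - L_{\sigma(t)}) \leq S_\delta$ for every $\delta > 0$. Proposition~\ref{convtoBM} combined with the cone-point property for BM then yields $S_\delta \to 0$ in probability in the Brownian limit for each fixed $\delta > 0$, and letting $\delta \to 0$ after $\kappa \to 8$ gives the desired in-probability convergence.

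To finish, it remains to upgrade in-probability convergence to $\BB E \log \sup < 0$. By Jensen's inequality it suffices to show $\BB E \sup \to 0$ as $\kappa \to 8$. This follows from the in-probability convergence together with a uniform integrability estimate, which I would obtain by dominating $\sup_{t \in \mathcal{M}}(L_t - L_{\sigma(t)})$ by the total oscillation of $L$ on $[0,\tau]$---a Skorohod-continuous functional whose $p$-th moment for some $p > 1$ is uniformly bounded for $\kappa$ in a left neighborhood of $8$ via standard L\'evy-process estimates and the explicit $\kappa/4$-stable scaling. Then $\log \BB E \sup \to -\infty$ as $\kappa \to 8$, so $\BB E \log \sup < 0$ for every $\kappa$ in some nontrivial left neighborhood of $8$, which yields the desired $\kappa_1 \in (\kappa_0, 8)$. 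The principal obstacle throughout is the discontinuity of $\sigma$; introducing the $\delta$-relaxed majorant $S_\delta$ is what makes the Skorohod convergence argument go through.
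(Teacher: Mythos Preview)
Your overall strategy---pass to the Brownian limit via Proposition~\ref{convtoBM}, invoke the absence of $\pi/2$-cone times for uncorrelated planar Brownian motion, then close with a uniform integrability bound---is exactly the paper's. The two proofs differ only in how they execute the two halves.

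For the in-probability step, however, your argument as written contains a genuine error. The claim ``$S_\delta \to 0$ in probability in the Brownian limit for each fixed $\delta > 0$'' is false: for any fixed $\delta > 0$ the $\delta$-relaxed cone condition is satisfied by \emph{every} pair $(s,t)$ with $t-s$ sufficiently small (by continuity of Brownian motion), so $S_\delta$ for the limiting Brownian pair is a strictly positive random variable, not zero. What is true---and what your phrase ``letting $\delta \to 0$ after $\kappa \to 8$'' suggests you may actually have intended---is the two-step argument: upper semicontinuity of $S_\delta$ at continuous paths plus the portmanteau theorem gives $\limsup_{\kappa \to 8} \BB P(S_\delta \ge \epsilon) \le \BB P(S_\delta^{\mathrm{BM}} \ge \epsilon)$ for each fixed $\delta$, and then the no-cone-point property yields $S_\delta^{\mathrm{BM}} \downarrow 0$ a.s.\ as $\delta \downarrow 0$ (by the same compactness argument). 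This corrected double limit does work, but it is not what you wrote. The paper sidesteps the $\delta$-majorant device entirely: it passes to an almost-sure coupling via Skorohod's representation theorem and argues directly that if $\max_{t \in \mathcal{M}} (t - \sigma(t))$ failed to vanish along a subsequence, a compactness argument would produce a nondegenerate $\pi/2$-cone interval for the limiting Brownian motion, a contradiction. This is shorter and avoids the order-of-limits delicacy.

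For the integrability step your Jensen route (bound $\BB E \log \sup \le \log \BB E \sup$ and show $\BB E \sup \to 0$ via a uniform $p$-th moment bound on the oscillation of $L$ over $[0,\tau]$) is a legitimate and arguably slicker alternative to the paper's approach. The paper instead truncates, showing $\log(\sup) \vee (-K) \to -K$ in distribution for each $K$, and then proves uniform integrability of $\max_{s \in [0,\tau]} \log|L_s| \vee (-K)$ using explicit moment formulas for $|L_1|$ and for $\tau$. Either route requires essentially the same moment input.
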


The next section is devoted to proving Theorem \ref{onebubble}; we will prove Theorem \ref{onebubbleconverse} in Section~\ref{sec-converseproof}.

\section{Proof of Theorem \ref{onebubble}}
\label{sec-onebubble}

In this section we prove Theorem \ref{onebubble}. 
In terms of $\eta$, the time $\tau$ in the theorem statement is the first time that $\eta$ absorbs the point $\rho_1$ on the positive real axis at $\gamma$-LQG length $1$ from the origin, and $\sigma$ is the first cut point incident to both the bubble formed at time $\tau$ and some bubble formed at a later time.  In our proof of Theorem \ref{onebubble}, we will also refer to the time $\xi$ at which the process $R$ achieves its minimum on $[0,\tau]$---or, equivalently, the last time $\eta$ hits the positive real axis before time $\tau$.  Figure~\ref{onebubblefigure} illustrates the  definitions of the three times $\xi$, $\sigma$ and $\tau$ in terms of both $\eta$ and the pair of processes $(L,R)$.

\begin{figure}
\begin{tabular}{ccc}
\includegraphics[width=0.36\textwidth]{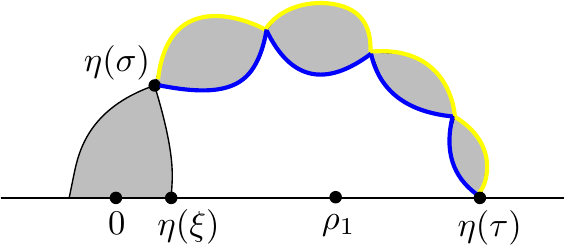}
&
\includegraphics[width=0.32\textwidth]{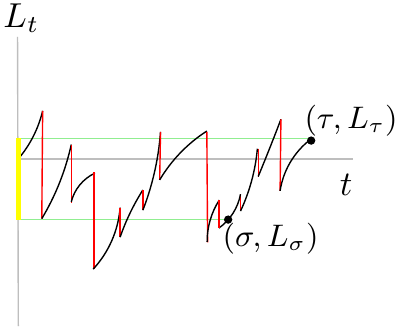}
&
\includegraphics[width=0.32\textwidth]{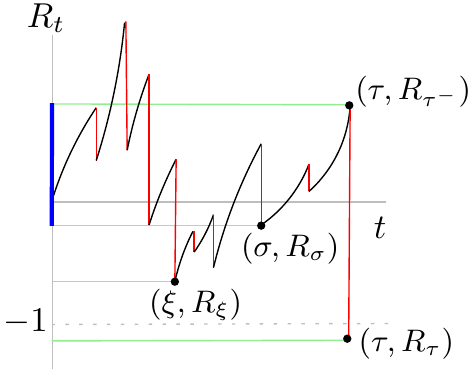}
\end{tabular}
\caption{The times $\xi$, $\sigma$ and $\tau$, defined in terms of $\eta$ (left) and in terms of $(L,R)$ (middle and right). Theorem \ref{onebubble} asserts the the $\gamma$-LQG length of the yellow boundary arc---or, equivalently, the size of the increment in $L$ colored yellow in the middle graph---has nonnegative log expectation.  The first step of our proof of Theorem \ref{onebubble} shows that this quantity stochastically dominates the $\gamma$-LQG length of the blue boundary arc---or, equivalently, the size of the increment in $R$ colored blue in the right graph.}
\label{onebubblefigure}
\end{figure}

Our proof of Theorem \ref{onebubble} consists of three main steps.

\begin{description}
\item[1. Showing that $L_{\tau} - L_{\sigma}$ stochastically dominates $R_{\tau^-} - R_{\sigma}$.]  Since the definition of $\sigma$ is tied closely to that of $\tau$, which depends on $R$ but not on $L$, it is technically easier to study the random variable $R_{\tau^-} - R_{\sigma}$ instead of $L_{\tau} - L_{\sigma}$.  So, we begin by showing that $L_{\tau} - L_{\sigma}$ stochastically dominates $R_{\tau^-} - R_{\sigma}$ (Proposition~\ref{stochasticdomination}), which reduces the task of proving Theorem \ref{onebubble} to showing that $\BB{E} \log(R_{\tau^-} - R_{\sigma}) \geq 0$.  

\item[2. Characterizing the law of $(L,R)$ run backwards from $\tau$ to $\xi$.] Since $\sigma$ is most easily described in terms of the time-reversed processes $L_{(\tau - t)^-}$ and $R_{(\tau - t)^-}$, we next determine the joint law of these time-reversed processes. Proposition~\ref{reversedLevyprocesseslaw} asserts that if we run $L$ and $R$ backward from time $\tau$ until the time $\xi$ at which $R$ reaches its minimum on $[0,\tau)$, then, conditional on $\{R_{\tau^-} - R_{\xi} = r\}$, the law of this pair of time-reversed processes is the same (up to a vertical translation) as that of $(-L,-R)$ run until $-R$ hits the level $-r$.  It follows (Corollary \ref{reversedLevyprocessescorollary}) that the regular conditional distribution of $R_{\tau^-} - R_{\sigma}$ given $\{R_{\tau^-} - R_{\xi} = r\}$ is equal to the law of the value of $R$ at the time $\theta_r$ of the last simultaneous running supremum of $(L,R)$ before $R$ hits the level $r$. By the scaling property of stable processes, this implies that the expectation of $\log(R_{\tau^-} - R_{\sigma})$ is equal to the sums  of the expectations of $\log(R_{\tau^-} - R_{\xi})$ and $\log(R_{\theta_1})$ (equation~\eqref{twoterms} below). 

\item[3. Computing the expectations of $\log(R_{\tau^-} - R_{\xi})$ and $\log(R_{\theta_1})$.]

By the previous step, to prove Theorem \ref{onebubble}, it is enough to show that the sum of the expectations of $\log(R_{\tau^-} - R_{\xi})$ and $\log(R_{\theta_1})$ is positive. 
The first term is easy to handle: we derive the law of $R_{\tau^-} - R_{\xi}$ directly from a result in~\cite{dk-overshoot}. 
To analyze the law of  $\log(R_{\theta_1})$, we use the fact from~\cite{wedges} that the law of $(L,R)$ is equal to a time reparametrization of a pair $(\tilde{L},\tilde{R})$ of correlated Brownian motions to express the law of $R_{\theta_1}$ as that of $\tilde{R}_{\tilde{\theta}_1}$, where $\tilde{\theta}_1$ is the last simultaneous running supremum of $(\tilde{L},\tilde{R})$ before $\tilde{R}$ hits the level $r$. It follows from results in~\cite{evans-cone} and~\cite{hawkes-uniform} that the set of running suprema of a planar Brownian motion has the law of the range of a subordinator whose index we can compute explicitly; hence, we can deduce the law of $R_{\theta_1}$ from the arcsine law for subordinators~\cite{bertoin-sub}.

\end{description}

The next three subsections of the paper are devoted to the proofs of these three main steps. 
 
\begin{remark} \label{remark-literature}
A key difficultly in our proof of Theorem \ref{onebubble} is that, because $\tau$ is a hitting time of $R$ and not $L$, the value $R_\sigma$ is much easier to handle than $L_{\sigma}$.  This is because the time $\sigma$ is more naturally analyzed in terms of $(L,R)$ run backwards from the time $\tau$, and the results in the L\'evy process literature give a nice description of $(L,R)$ run backwards until the running minimum time $\xi$ of $R$ on $[0,\tau]$.  (On this interval, $L$ run backward is just an ordinary L\'evy process, and $R$ run backward is the so-called pre-minimum process of a L\'evy process conditioned to stay positive, whose law is just that of a L\'evy process killed when it reaches a certain random level.) The nature of this result allows us to apply an arcsine law for subordinators to explicitly characterize the law of  $R_{\tau^-} - R_{\sigma}$, but not the law of of $L_{\tau} - L_{\sigma}$, which is the quantity we really care about.  Thus, we need to transfer our analysis of $R_{\tau^-} - R_{\sigma}$ in Steps 2 and 3 to a result for  $L_{\tau} - L_{\sigma}$ by comparing the laws of $L$ and $R$ on $[\sigma,\tau]$ using a crude approximation argument (Lemma~\ref{weight-lemma} below). The existing literature on L\'evy processes is not really helpful here because the time $\sigma$ is neither a stopping time nor a measurable function of a single L\'evy process.
\label{levy-remark}
\end{remark}

\subsection{Showing that $L_{\tau} - L_{\sigma}$ stochastically dominates $R_{\tau^-} - R_{\sigma}$}
\label{sec-dominate}

We now begin with the first step of the proof, which is summarized in the following proposition.

\begin{prop}
The random variable $L_{\tau} - L_{\sigma}$ stochastically dominates $R_{\tau^-} - R_{\sigma}$, \textit{i.e.},
\[\BB{E}\left( g(L_{\tau} - L_{\sigma}) \right) \geq \BB{E}\left( g(R_{\tau^-} - R_{\sigma}) \right)\] for all non-decreasing functions $g$.
\label{stochasticdomination}
\end{prop}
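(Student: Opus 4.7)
The plan is to exploit the asymmetry in the definition of $\tau$: since $\tau$ is the first time that $R$---and only $R$---jumps below $-1$, the process $R$ is implicitly conditioned to make a large downward jump at $\tau$, whereas $L$ has no analogous conditioning. Intuitively this should bias $R$ on $[\sigma,\tau)$ toward smaller values than $L$ on the same interval, yielding the stochastic dominance.

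To make this quantitative, I would first condition on the $\sigma$-algebra generated by $(L,R)|_{[0,\sigma]}$ together with $\sigma$ and $\tau$. A Palm / Campbell--Mecke disintegration applied to the Poisson random measure of jumps of $R$ then expresses the conditional law of the pre-jump trajectory $R|_{[\sigma,\tau)}$ as the $\kappa/4$-stable law starting from $R_\sigma$, constrained to stay $\geq R_\sigma$ on $[\sigma,\tau)$, and further reweighted by a factor proportional to the tail L\'evy mass $\nu((-\infty,\,-1-R_{\tau^-}])\propto (1+R_{\tau^-})^{-\kappa/4}$. Because $R_\sigma$ has been frozen by the conditioning, this weight is a strictly decreasing function of $R_{\tau^-}-R_\sigma$.

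For $L$, the same conditioning yields, to first approximation, the same $\kappa/4$-stable law starting from $L_\sigma$ and constrained to stay $\geq L_\sigma$ on $[\sigma,\tau)$, but with no endpoint weighting, since $L$ plays no role whatsoever in the definition of $\tau$. Because $L$ and $R$ are i.i.d., the two constrained laws have the same shape on $[\sigma,\tau)$, so the conditional law of $R_{\tau^-}-R_\sigma$ is obtained from that of $L_\tau - L_\sigma$ by reweighting through a strictly decreasing function of the random variable itself. A standard FKG-type observation---that decreasing weights are negatively correlated with indicators of right tails---then shows that such reweighting always produces a stochastically smaller law, so conditionally $L_\tau - L_\sigma$ stochastically dominates $R_{\tau^-}-R_\sigma$; integrating out the conditioning gives $\BB E\bigl[g(L_\tau - L_\sigma)\bigr] \geq \BB E\bigl[g(R_{\tau^-}-R_\sigma)\bigr]$ for every nondecreasing $g$.

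The main obstacle is that $\sigma$ is neither a stopping time nor a hitting time of a single L\'evy process: it is a last-exit-type functional of the bivariate process $(L,R)$ on $[0,\tau)$. Consequently, the event $\{\sigma = s_0\}$ couples past and future in a subtle way---the conditional laws of $L|_{[\sigma,\tau]}$ and $R|_{[\sigma,\tau)}$ are only approximately product, the minimality built into the definition of $\sigma$ introducing a cross-interaction that is not immediately transparent. I expect this to be absorbed into the ``crude approximation argument'' flagged in Remark~\ref{remark-literature} and formalized in Lemma~\ref{weight-lemma}: one approximates $\tau$ or $\sigma$ by nicer random times (e.g.\ discrete-time skeletons, or by sieving through a countable dense family of candidate stopping times and selecting the first that satisfies the simultaneous minimum condition), carries out the Palm weighting in the discretized setting where conditional independence is transparent, and then passes to the limit using the scaling and Markov properties of $(L,R)$.
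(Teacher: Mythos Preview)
Your proposal is correct and follows essentially the same route as the paper: condition on $(\sigma,\tau,R_\sigma)$ (the paper phrases this as $\{\tau-\sigma=t\}\cap\{R_\sigma+1=r\}$, after first conditioning on all of $(L,R)|_{[0,\sigma]}$ and integrating out), identify the conditional law of $L_{\sigma+\cdot}-L_\sigma$ as the L\'evy meander of length $t$ and that of $R_{\sigma+\cdot}-R_\sigma$ as the same meander reweighted by $(L^+_{t^-|t}+r)^{-\kappa/4}$, then apply the Harris/FKG inequality exactly as you describe. The approximation you anticipate is carried out via a heavy-tailed random walk discretization combined with an $\epsilon$-perturbation (condition to stay above $-\epsilon$ rather than $0$), with limits of conditional laws justified through Lemma~\ref{lem-cond-law-conv} and the Chaumont--Doney meander convergence (Lemma~\ref{meander}); your worry about residual cross-interaction between the $L$- and $R$-marginals turns out to be unnecessary, since only the two marginal conditional laws are needed for the domination statement and these decouple cleanly once one conditions on $\tau-\sigma$.
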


To prove Proposition \ref{stochasticdomination}, we want to characterize the regular conditional distributions of $L$ and $R$ on $[\sigma,\tau]$ given that $\tau - \sigma = t$ and $R_{\sigma}+1 = r$.  Intuitively, we should get (up to vertical translation) a pair of L\'evy processes started at zero and conditioned to stay positive until time $t$, with the second process jumping below $-r$ at time $t$. In the proof that follows, we will precisely define this laws of these two processes,  and show that the law of the second process is equal to the law of the first process weighted by a decreasing function of its value at time $t$ (Lemma \ref{weight-lemma}). By a general probability result (Lemma \ref{stochasticdecreasingfunction}), this property implies that the first process dominates the second, which is exactly the result we want to prove.

Though this heuristic is quite simple, rigorously justifying it requires some technical work; see Remark~\ref{levy-remark} above.
Before delving into the proofs of Lemmas~\ref{weight-lemma} and~\ref{stochasticdecreasingfunction}, which will together imply Proposition \ref{stochasticdomination}, we introduce some definitions and results from the literature that we will use in the proofs of these two lemmas.

First, we will use a discrete approximation of $(L,R)$, so we recall the following consequence of the stable functional central limit theorem.
Let $\{X_j\}_{j \in \BB{N}}$ be an i.i.d.\ sequence of centered random variables with laws supported on $\{1\} \cup \{-m : m \in \BB N\}$ such that
 \eqb
 \BB{P}(X_1=1) = 1-c_0 \quad \text{and} \quad \text{$\BB{P}(X_1 \leq -m) =c_1 m^{-\kappa/4}$ for $m \in \BB{N}$}  , \label{density}
 \eqe
 where the constants $c_0,c_1 > 0$ are chosen so that $\BB{E} X_1 = 0$, and let $S_n = \sum_{i=1}^n X_i$ be the associated heavy-tailed random walk. Then, for some constant $C > 0$ (recall Remark~\ref{remark-scaling}), the rescaled walk
\begin{equation}
W^{(n)}_t := C n^{-4/\kappa}  S_{\lfloor nt \rfloor} 
\label{rescaledwalk}
\end{equation}
converges in distribution to $L$ in the space of c\`adl\`ag functions $\mathcal{D}([0,\infty),\BB{R})$ with respect to the Skorohod topology (see, e.g.,~\cite{js-limit-thm}). 

Second, to analyze stochastic processes restricted to bounded intervals as random variables with values in $\mathcal{D}([0,\infty),\BB{R})$, we introduce the following convention: if $X: [0,\infty) \rightarrow \BB{R}$ is a c\`adl\`ag stochastic process and $a<b$ are positive real numbers, then we define the process $X$ on the interval $[a,b)$ as the process $Y: [0,\infty) \rightarrow \BB{R}$ with $Y_t = X_{t+a}$ for $t \in [0,b-a)$ and $Y_t = 0$ for $t \geq b-a$. Similarly, we define the process $X$ on the interval $[a,b]$ as the process $Y: [0,\infty) \rightarrow \BB{R}$ with $Y_t = X_{t+a}$ for $t \in [0,b-a]$ and $Y_t = X_b$ for $t \geq b-a$.

Third, our proof of Lemma \ref{weight-lemma} below uses two approximation procedures: the discrete approximations of L\'evy processes by random walks given by~\eqref{rescaledwalk}, and an approximation of the condition that the processes stay positive by a condition that they stay above $-\epsilon$. To take the necessary limits of the associated regular condition distributions, we will repeatedly use the following elementary lemma.

\begin{lem} \label{lem-cond-law-conv}
Let $(X_n,Y_n)$ be a sequence of pairs of random variables taking values in a product of separable metric spaces $\Omega_X\times\Omega_Y$ and let $(X,Y)$ be another such pair of random variables such that $(X_n,Y_n) \rta (X,Y)$ in law. Suppose further that there is a family of probability measures $\{P_y : y\in \Omega_Y\}$ on $\Omega_X$, indexed by $\Omega_Y$, such that for each bounded continuous function $f : \Omega_X\rta\BB R $,  
\eqb \label{eqn-cond-law-hyp}
\left( \BB E\left[f(X_n) \, |\, Y_n  \right] , Y_n \right)  \rta \left( \BB E_{P_Y}(f) , Y \right)  \quad \text{in law}.
\eqe
Then $P_Y$ is the regular conditional law of $X$ given $Y$.
\end{lem}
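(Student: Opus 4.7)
The plan is to verify the defining property of a regular conditional distribution: for every bounded continuous $f : \Omega_X \to \BB R$ and every bounded continuous $g : \Omega_Y \to \BB R$,
\eqbn
\BB E[f(X) g(Y)] = \BB E\bigl[ \BB E_{P_Y}(f) \cdot g(Y) \bigr],
\eqen
and then promote this via a monotone class argument to arbitrary Borel sets. The proof is a standard diagram chase using the Portmanteau theorem, driven by the trivial identity $\BB E[f(X_n) g(Y_n)] = \BB E\bigl[ \BB E[f(X_n) \mid Y_n] \cdot g(Y_n) \bigr]$, valid by the definition of conditional expectation since $g(Y_n)$ is $\sigma(Y_n)$-measurable.

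First I would take $n \to \infty$ on the left-hand side. Since $(X_n,Y_n) \to (X,Y)$ in law in the product space $\Omega_X \times \Omega_Y$ and $(x,y) \mapsto f(x) g(y)$ is bounded and continuous there, the left-hand side converges to $\BB E[f(X) g(Y)]$. For the right-hand side I would use the hypothesis~\eqref{eqn-cond-law-hyp}, which gives that $(\BB E[f(X_n) \mid Y_n], Y_n) \to (\BB E_{P_Y}(f), Y)$ in law in $\BB R \times \Omega_Y$. The map $\Phi(u,y) := u \cdot g(y)$ is continuous but not bounded; however, setting $M := \|f\|_\infty$, both $|\BB E[f(X_n) \mid Y_n]| \leq M$ a.s.\ and $|\BB E_{P_Y}(f)| \leq M$ a.s., so $\Phi$ may be replaced by the bounded continuous function $\wt\Phi(u,y) := \bigl((u \wedge M) \vee (-M)\bigr) \cdot g(y)$ without changing either expectation. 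Applying weak convergence to $\wt\Phi$ yields $\BB E[f(X_n) g(Y_n)] \to \BB E[ \BB E_{P_Y}(f) \cdot g(Y) ]$, which combined with the previous paragraph gives the desired identity for all bounded continuous $f, g$.

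Next I would promote the identity to all bounded Borel $f, g$. Since $\Omega_X$ and $\Omega_Y$ are separable metric, bounded continuous functions on each are convergence-determining: every bounded Borel function is a pointwise bounded monotone limit (along subsequences) of continuous functions off a suitably chosen set, and the class of pairs $(f,g)$ for which the identity holds is closed under bounded monotone limits by dominated convergence on both sides. A routine $\pi$–$\lambda$ / monotone class argument then extends the identity to $f = \BB 1_A$ and $g = \BB 1_B$ for arbitrary Borel $A \subset \Omega_X$ and $B \subset \Omega_Y$, giving $\BB P(X \in A,\, Y \in B) = \int_B P_y(A)\, d\BB P_Y(y)$. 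The required measurability of $y \mapsto P_y(A)$ follows from the same monotone class argument starting from $P_y(U)$ for open $U \subset \Omega_X$, each of which is a monotone limit of measurable functions $y \mapsto \BB E_{P_y}(f_k)$ (these are measurable because they arise as limits in law of $\BB E[f_k(X_n) \mid Y_n]$, but more simply because the function $y \mapsto \BB E_{P_y}(f_k)$ is implicitly assumed to be a well-defined random variable in the statement~\eqref{eqn-cond-law-hyp}). Thus $\{P_y\}_{y \in \Omega_Y}$ is a regular conditional distribution of $X$ given $Y$.

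There is no real obstacle in this argument; the only minor subtlety is that the natural test function $(u,y) \mapsto u g(y)$ is unbounded and so does not directly fit into the Portmanteau framework, which is why one truncates via the a.s.\ bound $\|f\|_\infty$ on the conditional expectation.
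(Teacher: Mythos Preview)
Your proposal is correct and follows essentially the same route as the paper: pass to the limit in the identity $\BB E[f(X_n)g(Y_n)] = \BB E\bigl[\BB E[f(X_n)\mid Y_n]\,g(Y_n)\bigr]$ using $(X_n,Y_n)\to(X,Y)$ on the left and the hypothesis~\eqref{eqn-cond-law-hyp} on the right, then upgrade via a monotone class argument. Your explicit truncation by $\|f\|_\infty$ to make the test function $(u,y)\mapsto u\,g(y)$ bounded is a detail the paper leaves implicit, and your discussion of the measurability of $y\mapsto P_y(A)$ goes slightly beyond what the paper spells out, but the core argument is the same.
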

\begin{proof}
Let $g : \Omega_Y\rta \BB R$ be a bounded continuous function. Then for each bounded continuous function $f : \Omega_X\rta\BB R $,
\alb
\BB E\left[f(X) g(Y) \right]
&= \lim_{n\rta \infty} \BB E\left[f(X_n) g(Y_n) \right] \quad \text{(since $(X_n,Y_n) \rta (X,Y)$ in law)} \notag\\
&= \lim_{n\rta\infty} \BB E\left[ \BB E\left[f(X_n) \, |\, Y_n \right]  g(Y_n)   \right] \notag \\
&=  \BB E\left[ \BB E_{P_Y}(f) g(Y)\right] \quad \text{(by~\eqref{eqn-cond-law-hyp})} .
\ale
By the functional monotone class theorem, this implies that $\BB E\left[F(X,Y) \right] = \BB E\left[\BB E_{P_Y}(F(\cdot,Y) ) \right]$ for every bounded Borel-measurable function $F$ on $\Omega_X\times\Omega_Y$. Thus the statement of the lemma holds.  
\end{proof}

Lemma~\ref{lem-cond-law-conv} and its proof are essentially identical to those of~\cite[Lemma 5.10]{gms-burger-cone}, except that the statement of~\cite[Lemma 5.10]{gms-burger-cone} is not quite correct since it only requires $\BB E\left[f(X_n) \, |\, Y_n  \right] \rta \BB E_{P_Y}(f)$ in law instead of~\eqref{eqn-cond-law-hyp} (all of the uses of the lemma in~\cite{gms-burger-cone}, however, are in situations where~\eqref{eqn-cond-law-hyp} is satisfied). We thank an anonymous referee for pointing out this error.

Finally, in order to take the $\epsilon \rightarrow 0$ limit of the processes conditioned to stay above $-\ep$, we will need to know that the law of a L\'evy process on $[0,t)$ started at $\epsilon$ and conditioned to stay positive on $[0,t)$ converges to a limit (in the Skorohod topology) as $\epsilon \rta 0$.  This is the content of the following lemma, which appears as Lemma 4 in~\cite{chaumont-doney-local-times}:

\begin{lem}
The law of a L\'evy process on $[0,t)$ started at $\epsilon$ and conditioned to stay positive on $[0,t)$ converges to a limit $L^{+}_{\cdot | t}$ (in the Skorohod topology) as $\epsilon \rta 0$; we call this limiting process the \textit{meander} with length $t$.
\label{meander}
\end{lem}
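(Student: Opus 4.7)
The plan is to realize the meander as a Doob $h$-transform of $L$ killed upon leaving $(0,\infty)$, and to identify it as the weak limit in Skorokhod space. Let $\BB{P}_\epsilon$ denote the law of $L$ started at $\epsilon$ and set $\zeta := \inf\{s \geq 0 : L_s \leq 0\}$. The Markov property immediately gives that the law $\BB{P}_\epsilon^{+|t}$ of $L$ started at $\epsilon$ and conditioned on $\{\zeta > t\}$ is absolutely continuous with respect to $\BB{P}_\epsilon$ with
\eqb
\frac{d\BB{P}_\epsilon^{+|t}}{d\BB{P}_\epsilon}\bigg|_{\mcl{F}_s} = \frac{\BB 1_{\{\zeta > s\}} \, \BB{P}_{L_s}(\zeta > t-s)}{\BB{P}_\epsilon(\zeta > t)}, \qquad 0 \leq s < t.
\eqe
So the lemma reduces to showing that this family of probability measures converges weakly as $\epsilon \downarrow 0$.

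To extract the small-$\epsilon$ behavior of the normalizing denominator, I would use the $\kappa/4$-stable scaling $\BB{P}_\epsilon(\zeta > t) = \BB{P}_1(\zeta > t\epsilon^{-\kappa/4})$ together with the classical tail estimate $\BB{P}_1(\zeta > T) \sim c T^{-\rho}$ as $T \to \infty$, where $\rho \in (0,1)$ is the positivity parameter of $L$ (which is explicit for a spectrally negative stable process of index $\kappa/4 \in (1,2)$). This yields $\BB{P}_\epsilon(\zeta > t) \sim c' \epsilon^{\kappa\rho/4}$. A parallel small-$x$ asymptotic $\BB{P}_x(\zeta > u) \sim c''(u) h(x)$ as $x \downarrow 0$, where $h$ is the renewal function of the ascending ladder height process of $-L$, then gives pointwise convergence of the Radon-Nikodym density to a martingale of the form $\BB 1_{\{\zeta > s\}} h(L_s)/F(t-s)$ for an explicit continuous function $F$.

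To upgrade pointwise convergence to Skorokhod convergence on $\mcl{D}([0,t), \BB{R})$, I would first prove tightness on each compact subinterval $[\delta, t-\delta]$ using standard Aldous-type increment criteria for $L$ combined with the explicit Radon-Nikodym bound above, and then identify the finite-dimensional distributions via the $h$-transform formula. The main obstacle is the behavior at the left endpoint $s \downarrow 0$: under $\BB{P}_\epsilon^{+|t}$ the process starts deterministically at $\epsilon$, but in the limit it must enter $(0,\infty)$ from $0$ according to a genuine non-degenerate entrance law. Here the fine asymptotic $h(x) \sim c\, x^{\kappa\rho/4}$ combined with a uniform estimate on $\BB{P}_{L_s}(\zeta > t-s)$ for $L_s$ near $0$ should show that the one-dimensional law at any small fixed $s > 0$ converges to a proper probability measure on $(0,\infty)$, pinning down the limiting process $L^+_{\cdot | t}$ and completing the proof.
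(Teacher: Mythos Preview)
The paper does not prove this lemma at all; it simply cites it as Lemma~4 of Chaumont and Doney~\cite{chaumont-doney-local-times} and uses it as a black box. Your proposal, by contrast, sketches a direct proof via a Doob $h$-transform and fluctuation-theoretic asymptotics. The outline you give is essentially the standard one (and indeed is the strategy of the cited reference): express the conditioned law via a Radon--Nikodym derivative, use stable scaling and the tail asymptotic $\BB P_1(\zeta>T)\sim cT^{-\rho}$ to normalize, identify the limiting martingale in terms of the renewal function $h$ of the descending ladder height process, and then handle tightness and the entrance law at $s\downarrow 0$.

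Two remarks on the sketch itself. First, for the specific process here (spectrally negative $\kappa/4$-stable with $\kappa/4\in(1,2)$), the positivity parameter is $\rho=4/\kappa$, so your exponent $\kappa\rho/4$ is simply $1$; this is consistent with the well-known fact that for spectrally negative L\'evy processes the relevant harmonic function is (a multiple of) the scale function $W(x)$, which for stable processes is a power of $x$. Second, the steps you label ``should show'' (tightness at the left endpoint and identification of the entrance law) are exactly the places where the actual work lies, and you have outlined rather than executed them. None of this is a gap relative to the paper, since the paper does not attempt a proof---you are simply re-deriving a known theorem rather than quoting it.
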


We can now characterize precisely  regular conditional distributions of $L$ and $R$ on $[\sigma,\tau]$ given that $\tau - \sigma = t$ and $R_{\sigma}+1 = r$.  

\begin{lem}
The regular conditional distributions of $L_{\sigma + \cdot} - L_{\sigma}$ and $R_{\sigma + \cdot} - R_{\sigma}$ on $[0,\tau-\sigma)$ given $\{\tau - \sigma = t\} \cap \{R_{\sigma}+1 = r\}$ are given, respectively, by the law of the meander  $L^{+}_{\cdot | t} $ and the law of the meander $L^{+ }_{\cdot | t} $ weighted by
\eqb 
\frac{\left(L^{+}_{t^- | t}  + r \right)^{-\kappa/4}}{\BB{E}\left( \left(L^{+}_{t^- | t}  + r \right)^{-\kappa/4}\right)}.\label{weightlimitlimit} 
\eqe 
\label{weight-lemma}
\end{lem}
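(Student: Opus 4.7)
The heuristic behind the statement is the following. The event $\{\tau-\sigma=t,\,R_{\sigma}+1=r\}$ specifies that $\sigma$ is a cut time of $(L,R)$ on $[0,\tau)$, so both coordinates of the shifted pair $(L_{\sigma+\cdot}-L_\sigma,\,R_{\sigma+\cdot}-R_\sigma)$ stay nonnegative on $[0,t)$, and that at time $t$ the shifted $R$-process jumps below $-r$. By independence of $L$ and $R$ and the fact that the terminal-jump condition concerns only $R$, the conditional law of the shifted pair on $[0,t)$ should be a pair of independent L\'evy processes started at $0$, each conditioned to stay nonnegative, with $R$ additionally reweighted by the probability of overshooting $-r$ at the final instant. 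By~\eqref{density}, the infinitesimal rate at which $R$ jumps from a state $y\geq 0$ below $-r$ is proportional to $(y+r)^{-\kappa/4}$, which yields the weight in~\eqref{weightlimitlimit}; no such reweighting occurs for $L$, so its marginal is the unweighted meander.

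The plan is to make this rigorous via a double approximation: replace $(L,R)$ by the independent random walk pair $(W^{(n),L},W^{(n),R})$ from~\eqref{rescaledwalk}, and soften the degenerate conditioning by opening up a small window. Concretely, let $\tau_n$ be the first step at which $W^{(n),R}$ takes a rescaled downward jump of magnitude at least $1$, let $\sigma_n$ be the discrete analogue of $\sigma$, and condition on $\{(\tau_n-\sigma_n,\,W^{(n),R}_{\sigma_n}+1)\in A\times B\}$ for small neighborhoods $A$ of $t$ and $B$ of $r$. By decomposing this event according to the value $\sigma_n=s_n$ and applying the ordinary Markov property of the walks at the deterministic time $s_n$, one finds that the conditional law of the walk increments on $[s_n,s_n+t_n]$ is that of a pair of independent random walks started at $0$, constrained to stay nonnegative on $[0,t_n)$, with the $R$-walk reweighted by the probability that its terminal step jumps below $-r_n-y$ from its pre-terminal value $y$, which by~\eqref{density} equals $c_1(y+r_n)^{-\kappa/4}$.

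To extract the continuum statement, I would first relax ``stay nonnegative'' to ``stay strictly above $-\epsilon$'' so that every conditioning event has positive probability and the Markov decomposition applies unambiguously. Sending $n\to\infty$ with the help of Lemma~\ref{lem-cond-law-conv} transfers the discrete description to $(L,R)$, and then sending $\epsilon\downarrow 0$ and invoking Lemma~\ref{meander} identifies the limit of the ``conditioned to stay above $-\epsilon$'' law with the meander $L^{+}_{\cdot\mid t}$. The weight factor $(y+r_n)^{-\kappa/4}$ converges to $(L^{+}_{t^-\mid t}+r)^{-\kappa/4}$ by continuity of the terminal-value functional, so after normalization by the appropriate expectation the weight appearing in~\eqref{weightlimitlimit} is recovered.

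The main obstacle is that $\sigma$ is not a stopping time: the event $\{\sigma_n=s_n\}$ entangles the past and future of $s_n$, since the future supplies the constraint that both walks stay at or above $W^{(n)}(s_n)$ on $[s_n,\tau_n)$, while the past supplies the requirement that no earlier candidate time has this property. Enumerating over the possible values of $s_n$ reduces the analysis to events at deterministic times, but one must carefully disentangle the past and future contributions to $\{\sigma_n=s_n\}$ in order to invoke the Markov property; see Remark~\ref{remark-literature}. This is precisely why a direct appeal to standard L\'evy-process tools is unavailable and the discretization-plus-approximation route is needed.
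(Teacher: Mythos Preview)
Your overall strategy---discretize, soften the positivity constraint by $\epsilon$, use Bayes' rule to produce the $(y+r)^{-\kappa/4}$ weight, then pass to limits via Lemmas~\ref{lem-cond-law-conv} and~\ref{meander}---matches the paper's. But you stop exactly at the hard step. In your last paragraph you correctly name the obstacle (the event $\{\sigma_n=s_n\}$ mixes past and future) and then say ``one must carefully disentangle''; you never actually do the disentangling, and without it your third paragraph's invocation of the Markov property at $s_n$ is unjustified. (A smaller slip: your $\tau_n$ should be the first time the $R$-walk goes below $-1$, not the first jump of size $\geq 1$.)

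There are two ways to close the gap. One is to finish your argument: observe that, once you impose the future constraint ``both walks stay $\geq$ their values at $s_n$ on $[s_n,\tau_n)$'', the infimum of each walk on $[s_n,\tau_n)$ is attained at $s_n$, so for any earlier candidate $s<s_n$ the requirement ``both stay $\geq$ their values at $s$ on $[s,\tau_n)$'' reduces to ``both stay $\geq$ their values at $s$ on $[s,s_n]$''. Thus minimality of $s_n$ becomes a condition on the path on $[0,s_n]$ alone, and the decomposition $\{\sigma_n=s_n\}=\{\text{past}\}\cap\{\text{future}\}$ goes through.

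The paper sidesteps this decoupling entirely with a $\delta$-trick. It first proves the weighted-meander identity for \emph{auxiliary} processes $L^{(r,\epsilon)},R^{(r,\epsilon)}$ that are \emph{defined} as $(L+\epsilon,R+\epsilon)$ conditioned to stay positive until $R$ drops below $-r$ (your part of the argument, but with no $\sigma$ in sight). It then connects these to the actual $(L,R)$ by conditioning on $\{\sigma=s\}$ together with the \emph{entire} path $(L,R)|_{[0,\sigma+\delta]}$ for a small $\delta>0$; on this event the ordinary Markov property at the deterministic time $s+\delta$ applies, and the remaining constraint on the future is exactly ``stay above $(L_s,R_s)$ until $R$ jumps below $-1$''. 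A L\'evy rescaling turns this into the $\epsilon$-conditioned auxiliary process with $\epsilon$ of order $L_{s+\delta}-L_s$ (resp.\ $R_{s+\delta}-R_s$), and sending $\delta\to 0$ via Lemma~\ref{meander} (together with Lemma~\ref{lem-cond-law-conv}) finishes. This route trades your enumeration over $s_n$ and the decoupling lemma for a third limit $\delta\to 0$, but avoids ever having to analyze the event $\{\sigma=s\}$ on its own.
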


\begin{proof}
Let $L^{(n)}$ and $R^{(n)}$ be independent copies of the rescaled walk $W^{(n)}$ of~\eqref{rescaledwalk}. Also, for fixed $r,\epsilon>0$, let $L^{(n,r,\epsilon)}$ and $R^{(n,r,\epsilon)}$ be obtained from the independent processes $L^{(n)} + \ep$ and $R^{(n)} + \ep$ by conditioning both processes to stay positive until the first time $\tau^{(n,r,\epsilon)}$ that the process $R^{(n,r,\epsilon)}$ hits the level $-r$. We define the processes $L^{(r,\epsilon)}$ and $R^{(r,\epsilon)}$ and the stopping time $\tau^{(r,\epsilon)}$ analogously with $(L,R)$ in place of $(L^{(n)}, R^{(n)})$. Since we are conditioning on a positive probability event, \eqb (L^{(n,r,\ep)} , R^{(n,r,\ep)} , \tau^{(n,r,\ep)}) \stackrel{\mathcal{L}}{\longrightarrow} (L^{(r,\ep)} , R^{(r,\ep)} , \tau^{(r,\ep)}) \label{nrepsilon} \eqe

By the choice of step distribution in~\eqref{density} and Bayes' rule, 
\begin{enumerate}[label=\textbf{(\Roman*)}]
\item
the regular conditional distribution of $L^{(n,r,\epsilon)}$ on the interval $\left[0,\tau^{(n,r,\epsilon)}-1/n\right)$ given $\{\tau^{(n,r,\epsilon)} =t\}$, weighted by
\eqb
\left.\left( L^{(n,r,\epsilon)}_{t - 1/n} + r \right)^{-\kappa/4} \middle/ \BB{E}\left( \left( L^{(n,r,\epsilon)}_{t - 1/n} + r \right)^{-\kappa/4}\right)\right. .
\label{weight-nre}
\eqe
\label{i}
\end{enumerate}
equals, for a.e.\ $t$ (a.e.\ taken w.r.t.\ the law of $\tau^{(n,r,\ep)}$), 
\begin{enumerate}[resume*]
\item
the regular conditional distribution of $R^{(n,r,\epsilon)}$ on the interval $\left[0,\tau^{(n,r,\epsilon)}-1/n\right)$ given $\{\tau^{(n,r,\ep)} = t\}$.
\label{ii}
\end{enumerate}
To prove the lemma, we would like to use this equality in distribution and take the limit as $n \rightarrow \infty$ and $\epsilon \rightarrow 0$.  The $n \rightarrow \infty$ limit is fairly straightforward.  Consider the family  $\{\mu_t : t\in \BB{R} \} $ of probability measures on  $\mathcal{D}([0,\infty),\BB{R})$ with $\mu_t$ defined as the distribution of a L\'evy process started at $\epsilon$ and conditioned to stay positive until time  $t$.  It is easy to see that the  joint law of $(L^{(n,r,\ep)} , R^{(n,r,\ep)} , \tau^{(n,r,\ep)})$ and the conditional law of $L^{(n,r,\epsilon)}$ given $\tau^{(n,r,\epsilon)}$ tends to $(L^{(r,\ep)}, R^{(r,\ep)},\tau^{(r,\ep)}, \mu_{\tau^{(r,\epsilon)}})$.  Thus, the joint law of $\tau^{(n,r,\epsilon)}$ and the conditional law of $L^{(n,r,\epsilon)}$  given $\tau^{(n,r,\epsilon)}$ weighted by \eqref{weight-nre} tends to $\mu_{\tau^{(r,\epsilon)}}$ weighted by
\eqb
\left. \left( L^{(r,\epsilon)}_{{\tau^{(r,\epsilon)}}^-} + r \right)^{-\kappa/4} \middle/ \BB{E}\left( \left( L^{(r,\epsilon)}_{{\tau^{(r,\epsilon)}}^-} + r \right)^{-\kappa/4}\right)\right. 
\eqe
Hence, by \eqref{nrepsilon} and Lemma~\ref{lem-cond-law-conv},  \ref{i} converges  to
\begin{enumerate}[resume*]
\item
the regular conditional distribution of $L^{(r,\epsilon)}$ on the interval $\left[0,\tau^{(r,\epsilon)}\right)$ given $\{\tau^{(r,\epsilon)} = t\}$, weighted by
\eqb
\left. \left( L^{(r,\epsilon)}_{t^-} + r \right)^{-\kappa/4} \middle/ \BB{E}\left( \left( L^{(r,\epsilon)}_{t^-} + r \right)^{-\kappa/4}\right)\right. 
\label{weightrn}
\eqe
\label{iii}
\end{enumerate}
This implies that \ref{iii} also equals, for a.e. $t$, the weak limit of \ref{ii} as $n \rightarrow \infty$. Hence,
\begin{enumerate}[resume*]
\item
the regular conditional distribution of $R^{(r,\epsilon)}$ on the interval $\left[0,\tau^{(r,\epsilon)}\right)$ given $\{\tau^{(r,\epsilon)} = t\}$ 
\label{iv}
\end{enumerate}
exists and is equal in law to \ref{iii}.  

Next, we would like to take $\epsilon \rightarrow 0$.  By Lemma \ref{meander}, the regular conditional distribution of $L^{(r,\epsilon)}$ on $\left.\left[0,\tau^{(r,\epsilon)}\right)\right.$ given $\{\tau^{(r,\epsilon)} = t\}$ given by $\mu_t$ converges weakly as $\epsilon \rightarrow 0$ to the meander $L^{+}_{\cdot | t}$ with length $t$. By the equality of the laws \ref{iii} and \ref{iv}, Lemma~\ref{meander} also implies that \ref{iv} converges weakly as $\epsilon \rightarrow 0$.  Taking $\epsilon \rightarrow 0$ in \ref{iii} and \ref{iv}, we deduce that
\begin{enumerate}[resume*]
\item
the law  of $L^{+}_{\cdot | t} $, weighted by~\eqref{weightlimitlimit}
\label{v}
\end{enumerate}
is equal to 
\begin{enumerate}[resume*]
\item the weak limit of \ref{iv} as $\epsilon \rightarrow 0$.
\label{vi}
\end{enumerate}
So, to prove the lemma, it is enough to prove the following claim:

\begin{claim}
The regular conditional distributions of $L_{\sigma + \cdot} - L_{\sigma}$ and $R_{\sigma + \cdot} - R_{\sigma}$ on $[\sigma,\tau)$ given $\{\tau - \sigma = t\} \cap \{R_{\sigma}+1 = r\}$ are given, respectively, by  the law of $L^{+}_{\cdot | t}$ and \ref{vi} with $r = 1 + R_{\sigma}$. 
\end{claim}

Fix $s , \delta > 0$.  For $ (\mathcal L , \mathcal R) \in \mathcal{D}([0,s+\delta],\BB{R}^2)$, the regular conditional distribution of $(L_{\sigma + \delta + \cdot}- L_{\sigma + \delta},R_{\sigma + \delta + \cdot} - R_{\sigma + \delta})$ given that $ \sigma = s $ and $\left. (L,R) \right|_{[0,\sigma+\delta]} = (\mcl L ,\mcl R)$  (when these conditions are compatible) is the law of a pair of independent L\'evy processes conditioned to stay above $\mathcal L_{s}- \mathcal L_{s+\delta}$ and $\mathcal R_{s}- \mathcal R_{s+\delta}$, respectively, until the first time the second process jumps below $-1-\mathcal R_{s+\delta}$.  Hence, considering the processes $L$ and $R$ separately, we have the following. 
\begin{itemize}
\item
The regular conditional distribution of $L_{\sigma + \delta + \cdot}- L_{\sigma + \delta}$ given $\{\sigma = s\}$,  $\{\tau = w\}$, and $\{ \left. (L,R) \right|_{[0,\sigma + \delta]} = (\mcl L,\mcl R)\}$ (when these conditions are compatible) is that of a L\'evy process started from 0 and conditioned to stay above $\mcl L_{s}- \mcl L_{s + \delta}$ until time $w-s-\delta$. By L\'evy scaling, scaling the time parameter of this process by $\frac{w-s}{w-s-\delta}$ and space by $\left(\frac{w-s}{w-s-\delta}\right)^{4/\kappa}$ yields the law of a L\'evy process conditioned to stay above $(\mcl L_{s}- \mcl L_{s + \delta}) \left(\frac{w-s}{w-s-\delta}\right)^{4/\kappa}$ until time $w-s$. 
By Lemma~\ref{meander}, this regular conditional law converges a.s.\ as $\delta \rta 0$ (weakly, w.r.t.\ the Skorokhod topology) to the law of a L\'evy meander $L^{+}_{\cdot | w-s}$ with length $w-s$. 
Obviously, $(L,R)|_{[0,\sigma+\delta]} \rta (L,R)|_{[0,\sigma]}$ and $L_{\sigma+\delta+\cdot} - L_{\sigma+\delta} \rta L_{\sigma+\cdot}  - L_\sigma$ a.s.\ w.r.t.\ the Skorokhod topology. 
By sending $\delta \rightarrow 0$ and applying Lemma~\ref{lem-cond-law-conv}, we deduce that the regular conditional distribution of $L_{\sigma + \cdot}- L_{\sigma}$ given $\{\sigma = s\}$,  $\{\tau = w\}$, and $\{ \left. (L,R) \right|_{[0,\sigma]} = (\mcl L,\mcl R)\}$ is the law of the L\'evy meander $L^{+}_{\cdot | w-s}$.

\item
The regular conditional distribution of $R_{\sigma + \delta + \cdot}- R_{\sigma + \delta}$ given $\{\sigma = s\}$, $\{\tau = w\}$, and $\{ \left. (L,R) \right|_{[0,\sigma + \delta]} = (\mcl L,\mcl R)\}$ (when these conditions are compatible) is that of a L\'evy process conditioned to stay above $\mcl R_{s}- \mcl R_{s + \delta}$ until jumping below $-1 - \mcl R_{s + \delta}$ at time $w-s - \delta$. By L\,evy scaling, scaling the time parameter of this process by $\frac{w-s}{w-s-\delta}$ and space by $\left(\frac{w-s}{w-s-\delta}\right)^{4/\kappa}$ yields the law of a L\'evy process conditioned to stay above 
 $(\mcl R_{s}- \mcl R_{s + \delta}) \left(\frac{w-s}{w-s-\delta}\right)^{4/\kappa}$ until jumping below $(-1 - \mcl R_{s + \delta}) \left(\frac{w-s}{w-s-\delta}\right)^{4/\kappa}$ at time $w-s$.  
 
Vertically translating by $\mcl R_{s+\delta}-\mcl R_s$ yields exactly \ref{iv} with $\epsilon$, $r$ and $t$ given by $(\mcl R_{s + \delta} - \mcl R_s)  \left(\frac{w-s}{w-s-\delta}\right)^{4/\kappa}$, $(1 + \mcl R_s)  \left(\frac{w-s}{w-s-\delta}\right)^{4/\kappa}$, and $w - s$, respectively.

Taking $\delta \rightarrow 0$ and applying Lemmas~\ref{lem-cond-law-conv} and \ref{meander}, we deduce that the regular conditional distribution of $R_{\sigma + \cdot}- R_{\sigma}$ on $[0,w-s)$ given $\left. (L,R) \right|_{[0,s]}$, $\{\sigma = s\}$, and $\{\tau = w\}$ is given by \ref{vi} with $r$ and $t$ replaced by $1 + R_{\sigma}$ and $w-s$, respectively.
\end{itemize}
This proves the claim, and hence the lemma. 
\end{proof}

The result of Proposition~\ref{stochasticdomination} is now a simple application of the following elementary probability fact, originally due to Harris~\cite{harris-inequality}. 

\begin{lem}[\cite{harris-inequality}]
 Let $X$ be a real-valued random variable, let $f: \BB{R} \rightarrow \BB{R}$ be a non-increasing function with $\BB{E} f(X) = 1$, and let $g: \BB{R} \rightarrow \BB{R}$ be a non-decreasing function. Then 
\eqb
\BB{E}(f(X) g(X)) \leq \BB{E} g(X).
\label{harris}
\eqe
\label{stochasticdecreasingfunction}
\end{lem}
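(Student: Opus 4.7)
The plan is to give the classical two-sample symmetrization argument, often called Chebyshev's sum inequality in the probabilistic setting. First I would introduce an independent copy $X'$ of $X$, so that $(X,X')$ has the product law. The key pointwise observation is that because $f$ is non-increasing and $g$ is non-decreasing, for any two real numbers $x,y$ the quantities $f(x)-f(y)$ and $g(x)-g(y)$ have opposite signs (or at least one is zero). Consequently
\[
(f(X)-f(X'))(g(X)-g(X')) \leq 0
\]
almost surely.

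Next I would expand the product and take expectations. Using the independence of $X$ and $X'$ and the fact that they are identically distributed, three of the four resulting expectations simplify: $\BB{E}[f(X)g(X')] = \BB{E}[f(X)]\BB{E}[g(X)]$ (and similarly for the other cross term), while both diagonal terms equal $\BB{E}[f(X)g(X)]$. The inequality above therefore collapses to
\[
2\BB{E}[f(X)g(X)] - 2\BB{E}[f(X)]\BB{E}[g(X)] \leq 0.
\]
Substituting the normalization $\BB{E}[f(X)] = 1$ and dividing by $2$ yields the claimed bound.

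The only technical point is integrability: we need the expectations above to be well-defined. Since $f(X)$ is integrable by hypothesis, one can reduce to the case of bounded $g$ by replacing $g$ with the truncation $g_n := (g \vee (-n)) \wedge n$, which is still non-decreasing; applying the inequality to $g_n$ and sending $n \to \infty$ via monotone (or dominated) convergence recovers the general statement. If $\BB{E} g(X)$ fails to exist as a finite number, the inequality is either vacuous or trivial depending on the sign. There is no real obstacle here — the entire argument is a one-line pointwise comparison plus Fubini, and this Lemma is included only to package the standard correlation inequality in the form needed to conclude Proposition~\ref{stochasticdomination} from Lemma~\ref{weight-lemma}.
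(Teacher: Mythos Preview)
Your symmetrization argument with an independent copy $X'$ is correct and is the standard ``Chebyshev/FKG'' proof of this correlation inequality; the integrability caveat you raise is handled exactly as you indicate by truncating $g$.

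For comparison: the paper does not actually prove the lemma in the compiled text---it is attributed to Harris and the proof is enclosed in a \texttt{comment} environment in the source. That commented-out argument takes a different route from yours: it writes
\[
\BB{E}[f(X)g(X)] - \BB{E}[g(X)] = \BB{E}\big[(f(X)-1)\mathbf{1}_{f(X)>1}\,g(X)\big] - \BB{E}\big[(1-f(X))\mathbf{1}_{f(X)\le 1}\,g(X)\big],
\]
notes that the two weights $(f(X)-1)\mathbf{1}_{f(X)>1}$ and $(1-f(X))\mathbf{1}_{f(X)\le 1}$ have the same expectation (since $\BB{E}f(X)=1$), and then uses monotonicity to bound $g$ on each event. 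Your two-sample approach is slightly cleaner and more symmetric; the paper's approach avoids introducing a second copy but relies explicitly on the normalization $\BB{E}f(X)=1$ to pick the splitting level. Either is perfectly adequate here.
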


To deduce Proposition~\ref{stochasticdomination} from Lemma~\ref{stochasticdecreasingfunction}, we observe that Lemma~\ref{weight-lemma} implies that for non-decreasing $g$, the expectations of $g(R_{\tau^-} - R_{\sigma} ) $ and $g(L_{\tau^-} - L_{\sigma} ) $ with respect to the regular conditional probability given $\{\tau - \sigma = t\} \cap \{R_{\sigma}+1 = r\}$ are equal to the left and right-hand sides of \eqref{harris}, respectively, with $X = L^{+}_{\cdot |t}$ and $f(x) =C (x+ r)^{-\kappa/4}$ for $C = \BB{E}\left( \left(L^{+}_{t^- | t}  + r \right)^{-\kappa/4}\right)$.

\subsection{Characterizing the law of $(L,R)$ run backwards from $\tau$ to $\xi$}
\label{sec-reverse}

Recall that $\xi$ is the time at which $R$ attains its minimum on $[0,\tau)$, equivalently the time of the last running minimum of $R$ before time $\tau$.
The result of Proposition \ref{stochasticdomination} reduces the task of proving of Proposition \ref{onebubble} from showing that $\BB{E} \log(L_{\tau} - L_{\sigma}) > 0$ to showing that $\BB{E} \log(R_{\tau^-} - R_{\sigma}) > 0$.  The latter is a more tractable quantity since the definition $\sigma$ is, in some sense, more closely tied to the process $R$. To analyze this random variable, we first apply the following proposition, which follows immediately from known results in the L\'evy process literature.

\begin{prop}
The regular conditional joint distribution of the processes $\{L_{\tau^-} - L_{(\tau - t)^-} \}_{t \in [0,\tau - \xi]}$ and $\{R_{\tau^-} - R_{(\tau - t)^-} \}_{t \in [0,\tau - \xi]}$ given $\{R_{\tau^-} - R_{\xi} = r\}$ is equal to the law of $(L,R)$ stopped at the first time the process $R$ hits level $r$.
\label{reversedLevyprocesseslaw}
\end{prop}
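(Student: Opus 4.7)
The plan is to exploit the independence of $L$ and $R$ to decouple the joint statement, then handle each component by L\'evy-process time-reversal duality. Since $\tau$ and $\xi$ are measurable functions of $R$ alone, I would condition on the full path $R|_{[0,\tau]}$, which in particular fixes the interval length $\tau - \xi$ and the overshoot $r = R_{\tau^-} - R_\xi$. Under this conditioning, $L|_{[\xi, \tau]}$ is an independent $\kappa/4$-stable L\'evy process on a deterministic interval. A direct computation of the joint law of increments --- using the stationarity and independence of increments together with the left-continuous $(\tau - s)^-$ convention, so that no boundary jumps are introduced a.s.\ --- shows that the reversed process $\tilde L_s := L_{\tau^-} - L_{(\tau - s)^-}$ for $s \in [0, \tau - \xi]$ is again a $\kappa/4$-stable L\'evy process with the same law as $L$, independent of $R$. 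This disposes of the $L$-component.

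The substance of the proposition is therefore the $R$-component: we must show that, conditional on $\{R_{\tau^-} - R_\xi = r\}$, the reversed process $\tilde R_s := R_{\tau^-} - R_{(\tau - s)^-}$ on $[0, \tau - \xi]$ has the law of $R$ run until its first-passage time to level $r$. The pathwise intuition is clean: because $\xi$ is the \emph{last} time $R$ attains its running minimum on $[0,\tau)$, we have $R_{(\tau-s)^-} > R_\xi$ for all $s \in (0, \tau - \xi)$, so $\tilde R_s \in [0, r)$ on that open interval while $\tilde R_{\tau - \xi} = r$. Since $R$ is spectrally negative, this matches exactly the pathwise behavior of $R$ run up to its first passage above the positive level $r$: the absence of positive jumps forces such a passage to occur by continuous approach with no overshoot, so the hitting time is the first time the process equals $r$.

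The equality of laws is then a form of the classical path decomposition of L\'evy processes at the minimum, developed in~\cite{chaumont-decomp, bertoin-savov-duality, chaumont-doney-local-times}, which yields a time-reversal duality identifying the post-minimum segment of $R$ on $[\xi, \tau]$, run in reverse and conditioned on the overshoot $r$, with $R$ run forward from $0$ up to its first-passage time of $r$. Combining this with the $L$-component via the independence of $L$ and $R$ gives the proposition. The hard part will be pinpointing a reference that states the \emph{conditional} (on $r$) version of the duality in the exact form required; if no direct reference is available, one can derive it from the unconditional form by approximation, conditioning on $R_{\tau^-} - R_\xi \in [r, r + \delta)$ and passing to $\delta \to 0$ using absolute continuity of the overshoot distribution for stable processes (as in~\cite{dk-overshoot}).
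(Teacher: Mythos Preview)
Your approach is correct and essentially the same as the paper's: handle $L$ by independence and the standard L\'evy duality lemma on the (conditionally) deterministic interval $[\xi,\tau]$, and handle $R$ by time-reversal duality from the first-passage time $\tau$ combined with path decomposition at the minimum. The paper resolves the ``hard part'' you flag by composing two precise results---\cite[Theorem~2]{bertoin-savov-duality}, which identifies the reversal of $R$ from the first-passage time $\tau$ (given $R_{\tau^-}$) as a spectrally positive process conditioned to stay positive, and then \cite[Theorem~5]{chaumont-decomp}, which says that the pre-minimum piece of such a conditioned process, given its minimum value, is just an unconditioned process run to the corresponding level---after which averaging over $R_{\tau^-}$ gives the statement conditioned on $r$ alone.
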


\begin{proof}
\cite[Theorem 2]{bertoin-savov-duality} identifies the regular conditional distribution of $\{1+ R_{(\tau - t)^-} \}_{t \in [0,\tau)}$ given $\{1+R_{\tau^-} =x\}$ as that of a $\kappa/4$-stable Levy process with only positive jumps started at $x$ and conditioned to stay positive, run until the last exit time of this process from $[0,1]$. By \cite[Theorem 5]{chaumont-decomp} (along with the remark just before Proposition 2 in that paper), the regular conditional distribution of the latter process run until the (a.s.\ unique) time at which it attains its minimal value, conditioned on its minimal value being equal to $y < x$, is that of a $\kappa/4$-stable Levy process with only positive jumps started at $x$ and run until the first time when it hits $y$. 
Hence, the regular conditional law of $\{1+ R_{(\tau - t)^-} \}_{t \in [0,\tau - \xi]}$ given $\{1+R_{\tau^-} =x\} \cap \{1 + R_\xi = y\}$ is the same as the law of $x - R$ run until the first time when it hits $y$. 
This implies that the regular conditional law of $\{    R_{\tau^-} - R_{(\tau - t)^-} \}_{t \in [0,\tau)}$ given $\{1+R_{\tau^-} =x\} \cap \{1 + R_\xi = x  - r\}$  is the same as the law of $  R$ run until the first time when it hits $r$. 
Averaging over the possible values of $x$ and using that $L$ is independent from $R$ and our conditioning depends only on $R$ now gives the statement of the lemma. 
\end{proof}

Proposition~\ref{reversedLevyprocesseslaw} immediately implies the following corollary.

\begin{cor}
The regular conditional distribution of $R_{\tau^-} - R_{\sigma}$ given $\{R_{\tau^-} - R_{\xi} = r\}$ is equal to the law of the value of $R$ at the time $\theta_r$ of the last simultaneous running supremum of $(L,R)$ before $R$ hits the level $r$. In particular, since $R_{\theta_r} \eqD r R_{\theta_1}$ by scaling,
\begin{equation}
\BB{E}  \log(R_{\tau^-} - R_{\sigma}) = \BB{E}  \log(R_{\tau^-} - R_{\xi}) + \BB{E} \log(R_{\theta_1}).
\label{twoterms}
\end{equation}
\label{reversedLevyprocessescorollary}
\end{cor}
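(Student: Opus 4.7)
The plan is to read off the corollary directly from Proposition~\ref{reversedLevyprocesseslaw} once the time $\sigma$ is reinterpreted in the time-reversed picture, and then to invoke the $\kappa/4$-stable scaling of $R$ to split the expectation.

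The first step is to introduce the time-reversed processes $\tilde L_t := L_{\tau^-} - L_{(\tau-t)^-}$ and $\tilde R_t := R_{\tau^-} - R_{(\tau-t)^-}$ on $[0,\tau-\xi]$, and to translate the definition of $\sigma(\tau)$ under the substitution $r = \tau - u$. One checks that the condition ``$L_r \geq L_s$ and $R_r \geq R_s$ for every $r \in [s,\tau)$'' is equivalent to ``$\tilde L_u \leq \tilde L_t$ and $\tilde R_u \leq \tilde R_t$ for every $u \in [0,t)$'' with $t := \tau - s$. In other words, $t$ is a simultaneous left-running supremum time of $(\tilde L, \tilde R)$. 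Requiring $\sigma$ to be the smallest such $s$ thus forces $\tau - \sigma$ to be the \emph{largest} simultaneous left-running supremum time of $(\tilde L, \tilde R)$ on $[0, \tau-\xi]$. Since $\xi$ is the a.s.\ unique minimum time of $R$ on $[0,\tau)$, we have $\tilde R_{\tau-\xi} = r$ while $\tilde R_u < r$ for all $u < \tau - \xi$, so $\tau - \xi$ is the first hitting time of $r$ by $\tilde R$.

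The second step is to apply Proposition~\ref{reversedLevyprocesseslaw}: conditional on $\{R_{\tau^-} - R_\xi = r\}$, the pair $(\tilde L, \tilde R)$ on $[0, \tau-\xi]$ is distributed as $(L,R)$ stopped at the first hitting time of $r$ by its second coordinate. Under this identification, the ``largest simultaneous running supremum time of $(\tilde L, \tilde R)$ before $\tilde R$ reaches $r$'' is precisely $\theta_r$, so $R_{\tau^-} - R_\sigma = \tilde R_{\tau - \sigma}$ corresponds to $R_{\theta_r}$. This proves the first assertion.

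The expectation decomposition follows from stable scaling. The $\kappa/4$-stable scaling of $(L,R)$ not only yields $R_{\theta_r} \eqD r R_{\theta_1}$ but realizes the regular conditional law of $R_{\tau^-} - R_\sigma$ given $\{R_{\tau^-} - R_\xi = r\}$ as the law of $r \cdot Z$, where $Z \eqD R_{\theta_1}$ is independent of $r = R_{\tau^-} - R_\xi$. Taking logarithms and then integrating in $r$ gives~\eqref{twoterms}. The only technical care needed is the c\`adl\`ag bookkeeping in the first step, namely matching the half-open interval $[s,\tau)$ in the definition of $\sigma$ with the left-limit condition for $(\tilde L, \tilde R)$, and verifying $\xi \leq \sigma < \tau$ a.s.\ so that $\tau - \sigma$ lies strictly inside $(0, \tau - \xi]$; both follow from the uniqueness of the minimum of $R$ on $[0,\tau)$ together with the remark preceding Proposition~\ref{reducingtoasinglebubble}.
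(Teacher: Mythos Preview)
Your argument is correct and is exactly the natural unpacking of what the paper means when it says the corollary follows ``immediately'' from Proposition~\ref{reversedLevyprocesseslaw}: one identifies $\tau-\sigma$ as the last simultaneous running supremum time of the reversed pair $(\tilde L,\tilde R)$ before $\tilde R$ reaches $r$, and then applies stable scaling to split the logarithm. The only cosmetic slip is that under the substitution $u=\tau-r$ the range $r\in[s,\tau)$ becomes $u\in(0,t]$ rather than $u\in[0,t)$; since $\tilde L_0=\tilde R_0=0$ and the endpoint $u=t$ is trivially satisfied, this does not affect the conclusion.
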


\subsection{Computing the expectations of $\log(R_{\tau^-} - R_{\xi})$ and $\log(R_{\theta_1})$}
\label{sec-compute}

To finish the proof of Theorem \ref{onebubble}, we compute the right-hand side of~\eqref{twoterms} and show it is non-negative for $\kappa \in (4,\kappa_0]$. We treat the two terms separately.

\begin{lem} \label{lem-term1}
One has $\BB{E}  \log(R_{\tau^-} - R_{\xi})  = \pi \cot(\pi \kappa/4)$.
\end{lem}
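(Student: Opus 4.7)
The plan is to obtain an explicit density for $U := R_{\tau^-} - R_\xi$ from the quintuple identity of Doney--Kyprianou~\cite{dk-overshoot} and then compute the expected logarithm by a direct integration.

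It will be cleanest to pass to the spectrally positive process $Y := -R$, a $\alpha$-stable L\'evy process with $\alpha := \kappa/4 \in (1,2)$. Writing $T := \tau$ for the first passage time of $Y$ above $1$, we have $U = \overline{Y}_{T^-} - Y_{T^-}$, the drawdown of $Y$ from its running maximum just before first passage. The positivity parameter of $Y$ equals $1 - 1/\alpha$, so by the stable Wiener--Hopf factorization the ascending ladder height of $Y$ is $(\alpha-1)$-stable with spatial potential $U_Y(d\bar y) \propto \bar y^{\alpha-2}\,d\bar y$, while the descending ladder height is $1$-stable (pure drift) with potential $\hat U_Y(du) \propto du$; the L\'evy tail of $Y$ is $\overline{\Pi}_Y(w) \propto w^{-\alpha}$. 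Since the ascending ladder has no drift, $Y$ does not creep above $1$, so $U > 0$ a.s.\ and the distribution has no atom at the origin.

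The quintuple identity at time $T$, after integrating out the position of the running maximum over $[0,1]$, will produce
\[
f_U(u) \;=\; C \int_0^1 \bar y^{\alpha-2}(1-\bar y+u)^{-\alpha}\, d\bar y, \qquad u > 0,
\]
for a normalization constant $C$ determined by $\int_0^\infty f_U(u)\,du = 1$. Using $\int_0^\infty(c+u)^{-\alpha}du = c^{1-\alpha}/(\alpha-1)$ and the reflection identity $\Gamma(\alpha-1)\Gamma(2-\alpha) = \pi/\sin(\pi(\alpha-1))$, one finds $C = -(\alpha-1)\sin(\pi\alpha)/\pi$.

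The expectation $\BB{E}\log U$ is then computed by Fubini and the substitution $u = (1-\bar y)v$ in the inner integral, which separates the $\log$ and reduces matters to the Beta-function derivatives $\partial_s B(s,\alpha-s)\vert_{s=1} = [\psi(1)-\psi(\alpha-1)]/(\alpha-1)$ and $\partial_b B(\alpha-1,b)\vert_{b=2-\alpha} = B(\alpha-1,2-\alpha)[\psi(2-\alpha)-\psi(1)]$. After the $\psi(1)$ terms and the $\sin(\pi\alpha)$ factors cancel against $C$, the expression collapses to
\[
\BB{E}\log(R_{\tau^-}-R_\xi) \;=\; \psi(2-\alpha)-\psi(\alpha-1) \;=\; \pi\cot(\pi(\alpha-1)) \;=\; \pi\cot(\pi\kappa/4),
\]
by the digamma reflection formula $\psi(1-z)-\psi(z) = \pi\cot(\pi z)$ at $z = \alpha-1$ together with the $\pi$-periodicity of $\cot$. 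The main obstacle is locating the correct form of the quintuple identity and the specific ladder-subordinator potentials in \cite{dk-overshoot}; the remaining Beta and digamma manipulations are routine.
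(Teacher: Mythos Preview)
Your proposal is correct and follows essentially the same route as the paper: both extract the law of $R_{\tau^-}-R_\xi$ from the Doney--Kyprianou quintuple/overshoot identity in \cite{dk-overshoot} and then reduce the logarithmic moment to Beta--digamma calculus. The only real difference is bookkeeping: the paper first integrates out the running-extremum variable to obtain the closed-form density $-\tfrac{\sin(\pi\kappa/4)}{\pi}\,w^{1-\kappa/4}/(1+w)$ (via the explicit antiderivative $(1-y)^{\kappa/4-1}(w+y)^{1-\kappa/4}/(1+w)$) and then differentiates a single Beta integral, whereas you keep the double integral and separate it with the substitution $u=(1-\bar y)v$, producing two Beta derivatives whose $\psi(1)$ terms cancel. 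Your ladder-height identifications ($(\alpha-1)$-stable ascending, pure drift descending) are the correct specialization for the spectrally positive $\alpha$-stable case and match the parameters in Example~7 of \cite{dk-overshoot}; the final step $\psi(2-\alpha)-\psi(\alpha-1)=\pi\cot(\pi\alpha)$ via digamma reflection is equivalent to the paper's direct evaluation $\partial_\beta B(2-\beta,\beta-1)|_{\beta=\kappa/4}$.
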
 
\begin{proof}
The law of $\log(R_{\tau^-} - R_{\xi})$ is given explicitly in the literature:~\cite[Example 7]{dk-overshoot} gives the explicit joint density\footnote{Note that we are applying the formula in~\cite{dk-overshoot} to the process $-R$, and setting $x = 1$.  The positivity parameter $\rho$ associated to $-R$ that appears in the formula in~\cite{dk-overshoot} is defined as $\BB{P}(-R_1 \geq 0)$. Since $-R$ is a $\kappa/4$-stable process with only positive jumps, $\rho = 1- 4/\kappa$ (page 218 of~\cite{bertoin-book}).  As a result, the power of the $v-y$ term in the density equals zero, and so that term vanishes from the expression.}
\begin{align}
&\BB{P}\left(
-1 - R_{\tau} \in du,
R_{\tau^-} + 1 \in dv,
R_{\xi} + 1 \in dy \right) \notag \\
&\qquad = \frac{\kappa}{4} \left( 1- \frac{\kappa}{4} \right) \frac{\sin{(\pi \kappa/4)}}{\pi}  \frac{(1 - y)^{\kappa/4 -2}}{(v + u)^{\kappa/4 + 1}} \, du \,dv \, dy
\label{dk}
\end{align}
for $u>0$, $y \in [0,1]$, and $v \geq y$.
Substituting $v =y+w$ and integrating out $u$ gives
\[
\BB{P}\left(
R_{\tau^-} - R_{\xi} \in dw,
R_{\xi} + 1 \in dy \right) = 
\left( 1- \frac{\kappa}{4} \right) \frac{\sin{(\pi \kappa/4)}}{\pi}  \frac{(1 - y)^{\kappa/4 -2}}{(y+w)^{\kappa/4}} \, dw \, dy .
\]
This last density has antiderivative $\frac{\sin{(\pi \kappa/4)}}{\pi} \frac{(1 - y)^{\kappa/4-1} (w + y)^{1 - \kappa/4}}{1 + w}$ with respect to the $y$ variable, so 
\eqb
\BB{P}\left(
R_{\tau^-} - R_{\xi} \in dw \right) = -
\frac{\sin{(\pi \kappa/4)}}{\pi} \frac{w^{1-\kappa/4}}{1+w} \, dw .
\eqe
Therefore, using the well-known identities for the Beta function $B(p,q)$ (see, e.g., Section 15.02 of~\cite{jeffreys-beta-function})
\begin{equation}
B(p,q) = \frac{\Gamma(p) \Gamma(q)}{\Gamma(p+q)} = \int_0^1 x^{p-1} (1-x)^{q-1} \,dx \qquad p,q>0 \label{first-Beta-identity}
\end{equation}
and
\begin{equation}
B(p,1-p) = \frac{\pi}{\sin(p\pi)} \qquad 0<p<1 \label{second-Beta-identity}
\end{equation}
we get
\begin{align}
\BB{E} \log(R_{\tau^-} - R_{\xi})
\nonumber &=-\frac{\sin{(\pi \kappa/4)}}{\pi} \int_0^{\infty} \log(w) \frac{w^{1-\kappa/4}}{1+w} \, dw \\
\nonumber &=  \frac{\sin{(\pi \kappa/4)}}{\pi} \left. \frac{\partial}{\partial \beta} \left( \int_0^{\infty} \frac{w^{1-\beta}}{1+w} \, dw \right) \right|_{\beta=\kappa/4}\\
\nonumber &=  \frac{\sin{(\pi \kappa/4)}}{\pi} \left. \frac{\partial}{\partial \beta} \left( \int_0^1 (1-v)^{1-\beta} v^{\beta-2}  \, dv \right) \right|_{\beta=\kappa/4} \qquad \text{$v = (1+w)^{-1}$}\\
\nonumber &=  \frac{\sin{(\pi \kappa/4)}}{\pi} \left. \frac{\partial}{\partial \beta} \left(  B(2-\beta,\beta-1)  \right) \right|_{\beta=\kappa/4}\qquad \text{by \eqref{first-Beta-identity}}\\
\nonumber &= - \sin{(\pi \kappa/4)} \left. \frac{\partial}{\partial \beta} \left( \frac{1}{\sin(\pi \beta)} \right) \right|_{\beta=\kappa/4} \qquad \text{by \eqref{second-Beta-identity} }\\
&= \pi \cot(\pi \kappa/4).  \label{firstterm}
\end{align}
\end{proof}

We now turn to analyzing the second term in~\eqref{twoterms}. 
 
\begin{lem} \label{lem-term2}
One has $\BB{E} \log R_{\theta_1}  = \psi(2-\kappa/4) - \psi(1)$, where $\psi(x) = \frac{\Gamma'(x)}{\Gamma(x)}$ denotes the digamma function (as in Theorem~\ref{main}).
\end{lem}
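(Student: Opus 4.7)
The plan is to carry out the outline sketched at the end of Section~\ref{sec-onebubble}. First, I will invoke the mating-of-trees correspondence from~\cite{wedges} to realize $(L,R)$ as a time reparametrization of a pair of correlated planar Brownian motions $(\tilde L, \tilde R)$, whose covariance structure is determined by $\kappa$ via the standard formula $\rho = -\cos(\pi\gamma^2/4)$ with $\gamma = 4/\sqrt\kappa$. The crucial point is that this time change preserves the simultaneous running-supremum times of the two coordinates and their values at those times---the time change only rescales the ``bubble excursion'' intervals, which are disjoint from simultaneous running-supremum times. Consequently $R_{\theta_1} \eqD \tilde R_{\tilde\theta_1}$, where $\tilde\theta_1$ is the last simultaneous running supremum of $(\tilde L, \tilde R)$ before $\tilde R$ hits level $1$, and the problem reduces to a Brownian computation.

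Next, I will identify the set of simultaneous running-supremum times of $(\tilde L, \tilde R)$ with the range of a stable subordinator. By time reversal, such a time $s$ is a ``cone time'' in the sense that the backward path from $s$ stays in the positive quadrant for duration $s$. After passing to uncorrelated coordinates, the positive quadrant becomes a wedge of opening angle $\theta = \arccos(-\rho)$ (possibly lifted beyond $\pi$ by unfolding around the origin, depending on the sign and magnitude of $\rho$). Combining Evans's cone-points theorem~\cite{evans-cone} with the uniform-dimension result of~\cite{hawkes-uniform} shows that this set is regenerative and coincides with the range of a stable subordinator.

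Now I will reparametrize this subordinator by $\tilde R$. On the set of cone times, $\tilde R$ coincides with its own running maximum, so the map $r \mapsto \tilde\theta_r$ is monotone, and the induced process $r \mapsto \tilde R_{\tilde\theta_r}$ is itself a stable subordinator by Brownian scaling. Its index $\alpha$ can be read off from the cone exponent, and a direct computation should give $\alpha = 2 - \kappa/4$. With this in hand, the generalized arcsine law for stable subordinators from~\cite{bertoin-sub} implies that the undershoot $\tilde R_{\tilde\theta_1}$ has density $\frac{\sin(\pi\alpha)}{\pi} x^{\alpha-1}(1-x)^{-\alpha}$ on $(0,1)$, i.e., a Beta$(\alpha,1-\alpha)$ distribution. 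The standard digamma identity for the Beta distribution then yields $\BB{E}\log X = \psi(\alpha) - \psi(1) = \psi(2-\kappa/4) - \psi(1)$, as desired.

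The main obstacle will be correctly pinning down the index $\alpha = 2 - \kappa/4$, which requires carefully chaining together the precise mating-of-trees correlation, the opening angle of the relevant wedge in uncorrelated coordinates (possibly with winding when $\theta$ would exceed $\pi$), the cone-exponent formula $\pi/(2\theta)$ for planar Brownian motion, and the effect of reparametrizing the subordinator by $\tilde R$-value rather than by its natural local-time parameter. A secondary technical point is cleanly justifying that the time change from $(\tilde L,\tilde R)$ to $(L,R)$ preserves simultaneous-supremum values, for which one needs a precise statement from~\cite{wedges} about how bubble excursions interact with this structure.
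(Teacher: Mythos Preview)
Your proposal is correct and follows essentially the same route as the paper: invoke~\cite[Proposition 10.3]{wedges} to pass from $(L,R)$ to a correlated planar Brownian motion, identify the simultaneous-extremum set as regenerative with the range of a stable subordinator, compute its index via Evans's cone-dimension theorem together with Hawkes's uniform-dimension result (the ``reparametrize by $\tilde R$'' step doubles the time-set dimension $1-\kappa/8$ to $2-\kappa/4$), apply the arcsine law, and finish with the Beta/digamma identity. The only cosmetic difference is a sign convention: the paper works with $-(L,R)$ and simultaneous running \emph{infima} of $(\tilde L,\tilde R)$ before $\tilde R$ hits $-1$, whereas you phrase everything via suprema and level $1$; also, your worry about winding beyond $\pi$ does not arise here since $\arccos(-\rho)=4\pi/\kappa\in(\pi/2,\pi)$ for $\kappa\in(4,8)$.
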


We will first compute the law of $R_{\theta_1}$.

\begin{lem} \label{lem-arcsine}
The law of $R_{ \theta_1}$ is given by the generalized arcsine distribution,
\eqb \label{eqn-arcsine}
\BB{P}(R_{\theta_1} \in dx ) = \frac{\sin \pi\left( 2 - \kappa/4 \right)}{\pi} x^{1 - \kappa/4} (1-x)^{\kappa/4-2} \, dx.
\eqe 
\end{lem}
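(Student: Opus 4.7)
The strategy is to reduce the problem to the Dynkin--Lamperti arcsine law for a stable subordinator by passing from $(L,R)$ to a correlated pair of Brownian motions. Specifically, I would first invoke the result from~\cite{wedges} mentioned in the sketch of Step~3: there is a time reparametrization under which $(L,R)$ becomes a pair $(\tilde L,\tilde R)$ of correlated Brownian motions (with correlation depending explicitly on $\kappa$). The cluster of times that are simultaneous running suprema of $(L,R)$ agrees, as a subset of $[0,\infty)$ up to this time change, with the simultaneous running suprema of $(\tilde L, \tilde R)$; in particular the value $R_{\theta_1}$ has the same distribution as $\tilde R_{\tilde \theta_1}$, where $\tilde \theta_1$ is the last simultaneous running supremum of $(\tilde L,\tilde R)$ before $\tilde R$ first hits the level $1$.

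Next I would apply a linear change of coordinates sending $(\tilde L,\tilde R)$ to a standard planar Brownian motion $W$. Under this transformation, the event that $t$ is a simultaneous running supremum of $(\tilde L,\tilde R)$ becomes the event that $t$ is a \emph{cone time} for $W$ relative to a cone whose opening angle $\phi$ is a specific function of the correlation (and hence of $\kappa$). By~\cite{evans-cone} together with~\cite{hawkes-uniform}, for $\phi$ in the appropriate range the closure of the set of such cone times is the range of a stable subordinator of some index $\alpha \in (0,1)$, and under the natural parametrization this subordinator is stable. Carrying the identification through, $\tilde R_{\tilde \theta_1}$ becomes the location of the last point of the range of this subordinator before it first exits $[0,1]$.

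The law of this last exit point is given by the generalized arcsine distribution for stable subordinators (see~\cite{bertoin-sub}): it has density
$$\frac{\sin \pi \alpha}{\pi}\, x^{\alpha-1}(1-x)^{-\alpha}\,dx$$
on $(0,1)$. Setting $\alpha = 2 - \kappa/4$ (so that $\alpha - 1 = 1 - \kappa/4$ and $-\alpha = \kappa/4 - 2$) recovers precisely the density in~\eqref{eqn-arcsine}. What remains is the bookkeeping: tracking the correlation of $(\tilde L,\tilde R)$ produced by the mating-of-trees construction, computing the resulting cone angle $\phi$, and verifying that the corresponding subordinator index equals $2-\kappa/4$.

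The main obstacle is this last piece of bookkeeping, i.e.\ ensuring that the time change really does match simultaneous running suprema with cone times and that the cone-angle-to-index correspondence in~\cite{evans-cone,hawkes-uniform} yields exactly $\alpha = 2-\kappa/4$. Once the correct correspondence is in place, the generalized arcsine law applies essentially as a black box. I note that $\alpha = 2 - \kappa/4$ coincides with the Hausdorff dimension of the cut points of SLE$_\kappa$ for $\kappa \in (4,8)$~\cite{miller-wu-dim}, which provides a useful consistency check, since simultaneous running suprema of $(L,R)$ correspond (up to a time change) to cut points of $\eta$.
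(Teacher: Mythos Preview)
Your outline is essentially the paper's proof: pass to the correlated Brownian pair $(\tilde L,\tilde R)$ from \cite{wedges}, identify the relevant regenerative set as the range of a stable subordinator via \cite{evans-cone} and \cite{hawkes-uniform}, and then invoke the Dynkin--Lamperti arcsine law from \cite{bertoin-sub}.

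One point needs to be sharpened. As you have written it, the subordinator whose range is ``the set of such cone times'' has index $1-\kappa/8$ (this is what \cite{evans-cone} gives), and $\tilde R_{\tilde\theta_1}$ is \emph{not} a point of that range. The subordinator to which the arcsine law is actually applied is the one whose range is the set of \emph{values} of $\tilde R$ at the simultaneous running extremum times; this set is regenerative and scale-invariant, hence the range of a stable subordinator, and $R_{\theta_1}$ is indeed its last point in $[0,1]$. The role of \cite{hawkes-uniform} is precisely to pass from the time set to the value set: composing with the $\tfrac12$-stable hitting-time subordinator $r\mapsto \inf\{t:\tilde R_t=-r\}$ doubles the Hausdorff dimension, yielding $2(1-\kappa/8)=2-\kappa/4$ for the value subordinator, which is the $\alpha$ you want. (A cosmetic point: the \cite{wedges} subordination identifies $-(L,R)$, not $(L,R)$, with the correlated pair sampled at ancestor-free times, so running suprema of $(L,R)$ correspond to running \emph{infima} of $(\tilde L,\tilde R)$.)
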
 
\begin{proof}
We will deduce the lemma from the arsine law for a certain stable subordinator.
Recall that $\theta_1$ is defined as the time of the last simultaneous running supremum of $(L,R)$ before $R$ hits the level $r$.  The simultaneous running suprema of $(L,R)$ are easier to analyze by expressing the law as $(L,R)$ in terms of a pair of correlated Brownian motions with a particular subordination.  

Suppose that $(\wt{L},\wt{R})$ is a planar Brownian motion with $\text{var}(\wt{L}_1)=\text{var}(\wt{R}_1)= \frac{1}{2} - \frac{p}{2}$ and $\text{cov}(\wt{L}_1,\wt{R}_1) = \frac{p}{2}$, where $p = -\cos(4\pi/\kappa)/(1 - \cos(4\pi/\kappa))$. For times $0 < s < t$, if $\wt{L}_r > \wt{L}_s$ and $\wt{R}_r > \wt{R}_s$ for all $r \in (s,t]$, then we say that $s$ is an \emph{ancestor} of $t$.  A time $t$ that does not have an ancestor is called \emph{ancestor free}. 
The set of ancestor free times is an uncountable set and has zero Lebesgue measure by~\cite[Lemma 1]{shimura-cone}.

Using standard Brownian motion techniques, it is shown in~\cite[Proposition 10.3]{wedges} that we can define a nondecreasing c\`adl\`ag process $\ell_t$ which is adapted to the filtration of $(\wt{L}_t,\wt{R}_t)$ and which measures the local time for $(\wt{L}_t,\wt{R}_t)$ at the ancestor-free times. Moreover, if $T_u = \inf\left\{ t \geq 0 : \ell_t > u\right\}$ is the right-continuous inverse of $\ell_t$, then the range of $u\mapsto T_u$ is the set of ancestor free times and 
the pair $( \wt{L}_{T_u},\wt{R}_{T_u})$ has the same joint law as the pair of $\kappa/4$-stable processes $- (L,R)$ (which have only upward jumps), modulo a deterministic scaling factor (see Remark~\ref{remark-scaling}).  

In particular, the random variable $R_{\theta_1}$ has the same law as $-\wt{R}_{\wt{\theta}_1}$, where $\wt{\theta}_1$ is the time of the last simultaneous running infimum of the correlated planar Brownian motion $(\wt{L},\wt{R})$ before $\wt{R}$ hits the level $-1$. 

The set of values of $-\wt{R}$ at the simultaneous running infima of $(\wt{L},\wt{R})$ is clearly regenerative; by scale invariance, it has the law of a stable subordinator. 
We claim that the index of this subordinator is $2-\kappa/4$. Once this is established, the arcsine law for subordinators~\cite[Proposition 3.1]{bertoin-sub} shows that the law of $-\wt R_{\wt\theta_1} \overset{\mcl L}{=} R_{\theta_1}$ is given by the right side of~\eqref{eqn-arcsine}, which concludes the proof. 
 
To determine the index of the above subordinator, it is enough to compute the a.s.\ Hausdorff dimension of its range.  First, we recall the following definition.

\begin{defn} \label{def-conetime}
A \textit{$\pi/2$-cone time} of an $\BB{R}^2$-valued process $(X,Y)$ is a time $t$ for which, for some choice of $\epsilon > 0$, we have $X_s > X_t$ and $Y_s > Y_t$ for all $s \in (t - \epsilon , t)$.  The largest such interval $(t - \epsilon , t)$ is called a \textit{$\pi/2$-cone interval} of $(X,Y)$.
\end{defn}

The set $\mcl R$ of times of the simultaneous running infima of  $(\wt{L},\wt{R})$ is precisely the set of $\pi/2$-cone times of $( \wt{L}, \wt{R})$
with the property that 0 is contained in the corresponding cone interval.  Thus,~\cite[Theorem 1]{evans-cone} (applied to a linear transformation of $(\wt L , \wt R)$ chosen so that the coordinates are independent) implies that the Hausdorff dimension of $\mcl R$ is $1 - \kappa/8$ almost surely. On the other hand, $\wt R(\mcl R ) = S^{-1}(\mcl R)$, where for $r \geq 0$, $S_r := \inf\{t > 0 : \wt R_t = -r\}$. Since $S$ is a $1/2$-stable subordinator,~\cite[Theorem 4.1]{hawkes-uniform} implies that $\op{dim}(R(\mcl R)) = 2\dim \mcl R = 2-\kappa/4$. Hence the set of values of $-\wt{R}$ at the simultaneous running infima of $(\wt{L},\wt{R})$ is an index $2 - \kappa/4$ subordinator.  
\end{proof}

\begin{proof}[Proof of Lemma~\ref{lem-term2}]
Using Lemma~\ref{lem-arcsine}, we compute
\begin{align}
\BB{E} \log R_{\theta_1}  
\nonumber &= \frac{1}{B(2 - \kappa/4, \kappa/4 - 1)} \int_0^1 \log{x} \cdot  x^{1 - \kappa/4} (1-x)^{\kappa/4-2} \, dx \qquad \text{by \eqref{second-Beta-identity} }\\
\nonumber &= \frac{1}{B(2 - \kappa/4, \kappa/4 - 1)} \int_0^1  \left. \frac{\partial}{\partial \beta}\left( x^{\beta - 1} (1-x)^{\kappa/4-2} \right) \right|_{\beta = 2 - \kappa/4} \, dx \\
\nonumber &= \frac{1}{B(2 - \kappa/4, \kappa/4 - 1)}   \left. \frac{\partial B(\beta , \kappa/4 - 1)}{\partial \beta} \right|_{\beta = 2 - \kappa/4} \qquad \text{by \eqref{first-Beta-identity} }\\
\nonumber &=  \left. \frac{\partial \log B(\beta , \kappa/4 - 1)}{\partial \beta} \right|_{\beta = 2 - \kappa/4} \\
\nonumber &=  \left. \frac{\partial \log \Gamma (\beta)}{\partial \beta} \right|_{\beta = 2 - \kappa/4} 
-   \left. \frac{\partial \log \Gamma (\beta + \kappa/4 - 1)}{\partial \beta} \right|_{\beta = 2 - \kappa/4} \quad \text{by \eqref{first-Beta-identity}} \\
&= \psi(2-\kappa/4) - \psi(1).   \label{secondterm}
\end{align}
\end{proof}

\begin{proof}[Proof of Theorem~\ref{onebubble}]
Plugging Lemmas~\ref{lem-term1} and~\ref{lem-term2} into~\eqref{twoterms} gives
\[
\BB{E}  \log(R_{\tau^-} - R_{\sigma}) = \BB{E}  \log(R_{\tau^-} - R_{\xi}) + \BB{E} \log(R_{\theta_1}) = \pi \cot(\pi \kappa/4) + \psi(2-\kappa/4) - \psi(1).
\]
The latter is a monotonically decreasing function of $\kappa$, and equals zero for $\kappa \approx 5.6158$.  Combining this with Proposition~\ref{stochasticdomination} proves Theorem \ref{onebubble}. 
\end{proof}

\section{Proof of Theorem \ref{onebubbleconverse}}
\label{sec-converseproof}

To prove Theorem \ref{onebubbleconverse}, we first characterize the limiting law of $L$ in the Skorohod topology as $\kappa$ tends to $8$.\footnote{The random variables considered in this section (such as $L$, $R$, and $\tau$) are all defined for each $\kappa$; however, to avoid clutter, we will not indicate this dependence on $\kappa$ in our notation.} To do this, we first need to specify the exact law of $L$. Recall from Remark~\ref{remark-scaling} that we have thus far only specified the law of $L$ up to a multiplicative constant.  Since changing this constant does not change the law of  the random variable $\log\left( \sup_{t \in \mathcal{M}} (L_t - L_{\sigma(t)} )\right)$, we may assume without loss of generality that 
$L$ is chosen to have characteristic function
\begin{equation}
\BB{E} e^{i \lambda L_t} = e^{t (i \lambda)^{\kappa/4}} = \exp\left(t |\lambda|^{\kappa/4} \left[ \cos \frac{\pi \kappa}{8}  + i \text{sgn}(\lambda) \sin \frac{\pi \kappa}{8} \right] \right),
\label{charfunction}
\end{equation}
so that
\begin{equation}
\BB{E} e^{\lambda L_t} = e^{t \lambda^{\kappa/4}}
\label{laplacetransform}
\end{equation}
for $\lambda \geq 0$~\cite{bernyk}.
For this choice of $L$, we have the following convergence result:

\begin{prop}
The process $L$ defined by~\eqref{charfunction} converges to $\sqrt{2} B$ in the Skorohod topology, where $B$ is a standard Brownian motion.
\label{convtoBM}
\end{prop}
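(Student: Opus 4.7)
The plan is to verify pointwise convergence of characteristic functions and invoke a standard criterion for convergence of L\'evy processes in the Skorohod topology.

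First, for each fixed $\lambda \in \BB{R}$ and $t \geq 0$, the principal branch of $z \mapsto z^{\alpha}$ is continuous in $\alpha$ (away from the branch cut), so as $\kappa \uparrow 8$ we have $(i\lambda)^{\kappa/4} \to (i\lambda)^2 = -\lambda^2$. Using~\eqref{charfunction},
\[
\BB{E} e^{i\lambda L_t} = e^{t (i\lambda)^{\kappa/4}} \xrightarrow[\kappa \to 8]{} e^{-t \lambda^2}.
\]
The right-hand side is the characteristic function of a centered Gaussian random variable with variance $2t$, which is precisely the law of $\sqrt{2}\,B_t$. Hence the one-dimensional marginals of $L$ converge in distribution to those of $\sqrt{2}\,B$.

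Since $L$ is a L\'evy process (hence has stationary independent increments) for each $\kappa \in (4,8)$, and $\sqrt{2}\, B$ is also a L\'evy process, convergence of the characteristic function at each fixed $t$ automatically gives convergence of all finite-dimensional distributions: indeed, for any $0=t_0<t_1<\cdots<t_n$ and $\lambda_1,\dots,\lambda_n \in \BB R$, the joint characteristic function of $(L_{t_1},\dots,L_{t_n})$ factors as $\prod_{j=1}^n \BB E e^{i(\lambda_j+\cdots+\lambda_n) L_{t_j-t_{j-1}}}$, and each factor converges to the corresponding factor for $\sqrt 2\, B$ by the computation above.

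To upgrade finite-dimensional convergence to Skorohod convergence on $\mathcal D([0,\infty),\BB R)$, we apply the standard criterion for convergence of L\'evy processes (see, e.g., Jacod--Shiryaev~\cite{js-limit-thm}, Corollary~VII.3.6): if $X^{(n)}$ is a sequence of L\'evy processes and $X$ is a L\'evy process, then $X^{(n)} \to X$ in the Skorohod topology as soon as $X^{(n)}_1 \to X_1$ in distribution. We have just verified this hypothesis, so the conclusion of the proposition follows. No step of this argument is delicate; the only minor subtlety is the continuity of $(i\lambda)^{\kappa/4}$ in $\kappa$ at $\kappa=8$, which is immediate from the continuity of $z\mapsto z^\alpha$ on the slit plane for the principal branch.
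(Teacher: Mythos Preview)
Your proof is correct and follows essentially the same approach as the paper: verify convergence of the one-dimensional marginals via the characteristic function~\eqref{charfunction}, then invoke a standard criterion reducing Skorohod convergence of L\'evy processes to convergence in law at a single time (the paper cites Kallenberg, Theorem~13.17 or Exercise~14.3, in place of your Jacod--Shiryaev reference). Your additional remark on finite-dimensional distributions is correct but not needed once that criterion is applied.
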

\begin{proof}
By the expression~\eqref{charfunction} for the characteristic function of $L_t$, one has $L_t \rta \sqrt 2 B_t$ in law for each fixed $t\geq 0$.
The proposition therefore follows from a standard convergence criterion for L\'evy processes; see, e.g.,~\cite[Theorem 13.17 or Exercise 14.3]{kallenberg}. 
\end{proof}

Proposition~\ref{convtoBM} allows us to show that $\sup_{t \in \mathcal{M}} (L_t - L_{\sigma(t)})$ converges to zero in distribution as $\kappa \rightarrow 8$, since the intervals $[\sigma(t),t]$ are all degenerate in the $\kappa \rightarrow 8$ limit by well-known properties of Brownian motion.  Formally, we have the following corollary:

\begin{cor}
The random variable
\[
\max_{t \in \mathcal{M}} \; (t - \sigma(t))
\]
converges to zero in law as $\kappa \rightarrow \infty$.
\label{noconetimes}
\end{cor}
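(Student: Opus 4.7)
My plan is to combine Proposition~\ref{convtoBM} (upgraded to a joint statement for the pair $(L,R)$) with the classical fact, due to Evans~\cite{evans-cone}, that planar Brownian motion with independent coordinates almost surely has no $\pi/2$-cone intervals. I interpret the limit in the statement as $\kappa \to 8^-$, the upper end of the parameter range $(4,8)$ considered throughout this paper. Since $L$ and $R$ are i.i.d.\ for each $\kappa$, Proposition~\ref{convtoBM} combined with independence gives joint Skorohod convergence $(L,R) \to (\sqrt{2}\, B^1, \sqrt{2}\, B^2)$, where $B^1,B^2$ are independent standard Brownian motions. Realizing this convergence almost surely on a common probability space via Skorohod representation, and noting that since the limit is continuous the Skorohod convergence is equivalent to uniform convergence on compact time intervals, a short argument using that Brownian motion a.s.\ has no local maximum at its first hit of a lower level gives $\tau \to \tilde{\tau} := \inf\{t \geq 0 : \sqrt{2}\, B^2_t = -1\}$ almost surely, with $\tilde{\tau}$ a.s.\ finite.

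Suppose for contradiction that the conclusion fails: then there exist $\delta, \eta > 0$ and a subsequence $\kappa_n \to 8^-$ along which $\BB{P}(\max_{t \in \mathcal{M}} (t - \sigma(t)) > \delta) \geq \eta$. Extracting a further subsequence and working on a positive-probability event, we obtain random times $s_n < t_n$ in $[0, \tau]$ with $t_n - s_n > \delta$ and with $L_r \geq L_{s_n}$ and $R_r \geq R_{s_n}$ for every $r \in [s_n, t_n)$. Since $s_n, t_n$ are bounded by $\tau \to \tilde{\tau}$, after one more extraction we may assume $(s_n, t_n) \to (s^*, t^*)$ a.s.\ with $0 \leq s^* < t^* \leq \tilde{\tau}$ and $t^* - s^* \geq \delta$. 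The uniform-on-compacts convergence of $(L,R)$ to $(\sqrt{2}\, B^1, \sqrt{2}\, B^2)$ together with the continuity of the limit then lets me pass the two inequalities to the limit to obtain $B^1_r \geq B^1_{s^*}$ and $B^2_r \geq B^2_{s^*}$ for every $r \in [s^*, t^*]$. Thus $s^*$ is the starting point of a forward $\pi/2$-cone interval of length at least $\delta$ for the planar Brownian motion $(B^1, B^2)$.

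The final step is to derive a contradiction from the nonexistence of such cone intervals for planar Brownian motion with independent coordinates. By~\cite[Theorem~1]{evans-cone}, the set of $\theta$-cone points of $(B^1, B^2)$ has Hausdorff dimension $2 - \pi/\theta$, which is negative for $\theta < \pi/2$; a direct oscillation argument (via the law of iterated logarithm for one-dimensional Brownian motion applied to each coordinate) handles the boundary case $\theta = \pi/2$ and shows that the set is in fact empty. Hence no forward $\pi/2$-cone interval of positive length can exist for $(B^1, B^2)$, contradicting the previous paragraph.

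The main obstacle is rigorously passing the two inequalities $L_r \geq L_{s_n}$ and $R_r \geq R_{s_n}$ through the Skorohod limit at the random endpoints $s_n, t_n$, which in general need not converge to continuity points of the approximating processes. This is precisely why we invoke the continuity of the limit to upgrade Skorohod convergence to uniform convergence on compacts. A small related subtlety is that the definition of $\sigma(t)$ uses the half-open interval $[s,t)$, which tolerates small perturbations of the right endpoint in the limit; without this, the presence of a downward jump in $L$ at $t_n$ (which is typical when $t_n \in \mathcal{M}$) would complicate the argument.
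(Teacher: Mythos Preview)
Your argument follows the paper's proof essentially step for step: Skorohod representation to upgrade the convergence of $(L,R)$ to independent Brownian motions to almost sure convergence, convergence of $\tau$ (using that Brownian motion immediately enters $(-\infty,-1)$ after hitting $-1$), a subsequential contradiction producing a limiting interval $[s^*,t^*]$ of length at least $\delta$, and passage of the inequalities to the limit via uniform-on-compacts convergence (valid because the limit is continuous). The structure and the key ideas are the same.

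The one place where your write-up needs tightening is the very last step. Evans's formula $2-\pi/\theta$ gives Hausdorff dimension $0$, not a negative number, at $\theta=\pi/2$, so it does not by itself imply the set of $\pi/2$-cone times is empty. And the law of the iterated logarithm is a statement about a \emph{fixed} (or stopping) time; it rules out a cone interval starting at any deterministic $s$, but does not directly exclude cone intervals at \emph{every} random time simultaneously without an additional argument (e.g., reducing to a countable collection of candidate times). The paper sidesteps this by citing \cite[Theorem~1]{shimura-cone}, which states precisely that an uncorrelated planar Brownian motion a.s.\ has no $\pi/2$-cone times; substituting that reference for your Evans-plus-LIL sketch closes the gap cleanly.
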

\begin{proof}
By Proposition \ref{convtoBM}, the law of $(L,R)$ converges as $\kappa \rightarrow 8$ to $(\sqrt{2} B_1, \sqrt{2} B_2)$, where $B_1$ and $B_2$ are independent standard Brownian motions. By Skorohod's representation theorem, we can represent the distributions of $(L,R)$ for $\kappa \in (4,8)$ on the same probability space so that this convergence occurs almost surely. Since a linear Brownian motion a.s.\ enters $(-\infty,-1)$ immediately after hitting $-1$, we see that $\tau$ converges to a limit almost surely as $\kappa \rightarrow 8$.  Thus, if we assume for contradiction that $\max_{t \in \mathcal{M}} (t - \sigma(t))$ does not tend to zero as $\kappa \rightarrow 8$, we can choose a subsequence $\kappa_n$ tending to $8$ and, for each $n$, an element $t_n$ in the set $\mathcal{M}$ corresponding to $\kappa = \kappa_n$, such that the intervals $[\sigma(t_n),t_n]$ converge to an interval $[a,b]$ with $a<b$ as $n \rightarrow \infty$.  By the almost sure convergence of the processes $L$ in the Skorohod topology, the continuity of the limiting process $(\sqrt{2} B_1, \sqrt{2} B_2)$, and the definition of $\sigma(t_n)$ (Notation~\ref{sigmanot}) the interval $[a,b]$ is a $\frac{\pi}{2}$-cone interval for $(\sqrt{2} B_1, \sqrt{2} B_2)$ (Definition~\ref{def-conetime}), which is a contradiction since an uncorrelated planar Brownian motion a.s.\ does not have any $\frac{\pi}{2}$-cone times~\cite[Theorem~1]{shimura-cone}.
\end{proof}

Proposition~\ref{convtoBM} together with Corollary \ref{noconetimes} implies that $\sup_{t \in \mathcal{M}} (L_t - L_{\sigma(t)})$ converges to zero in distribution as $\kappa \rightarrow 8$.  Hence, for each fixed $K>0$,
\[
 \log\left( \sup_{t \in \mathcal{M}} (L_t - L_{\sigma(t)} )\right) \vee (-K) \rightarrow -K
\]
in distribution as $\kappa \rightarrow 8$. So, to prove that the expectation of $\log\left( \sup_{t \in \mathcal{M}} (L_t - L_{\sigma(t)}) \right) $ is negative for $\kappa$ sufficiently close to $8$, it suffices to check the following uniform integrability result:

\begin{lem}
For each fixed $K>0$ and $\kappa' \in (4,8)$, the set of random variables $\max_{s \in [0,\tau]} \log|L_s| \vee (-K) $ for $\kappa \in [\kappa',8)$ is uniformly integrable.
\end{lem}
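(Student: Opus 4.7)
The random variable $X_\kappa := \max_{s\in[0,\tau]}\log|L_s|\vee(-K)$ is bounded below by $-K$, so uniform integrability of $\{X_\kappa\}$ reduces to a uniform second-moment bound on the positive part $(\log \max_{s\in[0,\tau]}|L_s|)^+$. It therefore suffices to find $p>0$ with
\begin{equation*}
\sup_{\kappa\in[\kappa',8)} \BB E\left[\left(\max_{s\in[0,\tau]}|L_s|\right)^{p}\right] < \infty,
\end{equation*}
because the elementary bound $\log y \leq (2/p)\, y^{p/2}$ for $y\geq 1$ then yields $((\log \max|L_s|)^+)^2 \leq (2/p)^2 (\max|L_s|)^p + 1$, and a uniform $L^2$-bound gives uniform integrability.

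Since $\tau$ is a measurable functional of $R$ and $L$ is independent of $R$ (Theorem~\ref{LRthm}), I would condition on $\tau$ and use the $\kappa/4$-stable scaling $\max_{s\in[0,t]}|L_s| \eqD t^{4/\kappa}\max_{s\in[0,1]}|L_s|$ to factor
\begin{equation*}
\BB E\left[\left(\max_{s\in[0,\tau]}|L_s|\right)^{p}\right] = \BB E\left[\tau^{4p/\kappa}\right] \cdot \BB E\left[\left(\max_{s\in[0,1]}|L_s|\right)^{p}\right].
\end{equation*}
I would take $p := \kappa'/16$, which is strictly less than $\kappa/4$ (needed for the $L$-factor) and satisfies $4p/\kappa \leq 1/4$ uniformly in $\kappa\in[\kappa',8)$ (needed for the $\tau$-factor).

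For the $L$-factor, split $|L_s| \leq L_s^+ + (-L_s)^+$ and treat the two suprema separately. The positive supremum $\sup_{s\leq 1}L_s$ has super-polynomial tails uniformly in $\kappa$: the exponential martingale $e^{\lambda L_t - t\lambda^{\kappa/4}}$ from~\eqref{laplacetransform}, combined with optional stopping at the first-passage time of the spectrally negative process $L$ to level $x$, gives $\BB P(\sup_{s\leq 1}L_s \geq x) \leq \inf_{\lambda>0} e^{\lambda^{\kappa/4}-\lambda x}$, whose optimum decays faster than any polynomial uniformly in $\kappa\in[\kappa',8)$. The negative supremum $\sup_{s\leq 1}(-L_s)$ has L\'evy tail exponent $\kappa/4$; for $p=\kappa'/16 < \kappa'/4 \leq \kappa/4$ its $p$-th moment depends continuously on $\alpha := \kappa/4$ across the compact interval $[\kappa'/4, 2]$, with Brownian limit $\BB E[\sup_{s\leq 1}(-\sqrt{2}B_s)^p]$ at $\alpha=2$, and is therefore uniformly bounded.

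For the $\tau$-factor, the classical first-passage identity for spectrally negative stable processes applied to $R$ (Laplace exponent $\lambda^{\kappa/4}$) gives $\BB E[e^{-q\tau}] = e^{-q^{4/\kappa}}$, so $\tau$ is one-sided $(4/\kappa)$-stable. The associated moment formula $\BB E[\tau^r] = \Gamma(1-r\kappa/4)/\Gamma(1-r)$ (valid for $r<4/\kappa$), applied with $r = 4p/\kappa$, evaluates to $\Gamma(1-\kappa'/16)/\Gamma(1-4p/\kappa)$; both Gamma values remain bounded on $\kappa\in[\kappa',8)$ because $4p/\kappa \in [\kappa'/32, 1/4]$ keeps $1-4p/\kappa$ uniformly bounded away from $0$. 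The main technical hurdle is not any single estimate but keeping the L\'evy-process bounds uniform as $\kappa\to 8$, where the stable index of both $L$ and $\tau$ approaches a critical value; the margin built into the choice $p=\kappa'/16$ ensures every relevant exponent stays well away from these critical values throughout $[\kappa',8)$.
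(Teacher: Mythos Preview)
Your overall architecture matches the paper's proof: reduce to a uniform bound on $\BB E\left[(\max_{s\in[0,\tau]}|L_s|)^{p}\right]$, factor via independence of $L$ and $R$ and stable scaling, then bound the $L$-factor and the $\tau$-factor separately. The reduction and the factoring are fine.

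The $\tau$-factor, however, contains a genuine error. You invoke ``the classical first-passage identity for spectrally negative stable processes'' to conclude $\BB E[e^{-q\tau}]=e^{-q^{4/\kappa}}$ and hence that $\tau$ is one-sided $(4/\kappa)$-stable. That identity applies to the first passage of a spectrally negative process \emph{above} a positive level, where the crossing is necessarily continuous. Here $\tau=\inf\{t:R_t<-1\}$ is the first passage \emph{below} a negative level; since $R$ has only downward jumps, this crossing occurs by a jump with an overshoot, and the law of $\tau$ is not stable. Its actual tail is $\BB P(\tau>t)\asymp t^{-(1-4/\kappa)}$ (this can be seen by scaling: $\BB P(\tau>t)=\BB P(\inf_{s\leq 1}R_s\geq -t^{-4/\kappa})$, and the density of $|\inf_{s\leq 1}R_s|$ behaves like $x^{\kappa/4-2}$ near $0$). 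So $\BB E[\tau^{r}]<\infty$ iff $r<1-4/\kappa$, not $r<4/\kappa$ as your formula would give. With your choice $p=\kappa'/16$ the exponent is $4p/\kappa=\kappa'/(4\kappa)$, which at $\kappa=\kappa'$ equals $1/4$; for any $\kappa'\in(4,16/3)$ this exceeds $1-4/\kappa'$, so the moment you claim to bound is infinite on part of the range.

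The paper addresses this by citing the explicit series density of $\tau$ from \cite{peskir}, bounding $f_\tau(t)\leq C_{\kappa'}\,t^{-(2-4/\kappa)}$ for $t\geq 1$ uniformly in $\kappa\in[\kappa',8)$, and then choosing the exponent $q<\kappa'/4-1$ so that $\BB E[\tau^{4q/\kappa}]$ is uniformly finite. Your argument can be repaired by replacing the first-passage identity with this tail bound and adjusting $p$ accordingly.

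A smaller point: for the $L$-factor, your ``continuity in $\alpha$ on $[\kappa'/4,2]$'' for $\BB E[\sup_{s\leq1}(-L_s)^p]$ is plausible but not self-evident, since continuity at the Brownian endpoint $\alpha=2$ requires more than convergence in law (you would need uniform integrability of a higher moment, which is the very thing under discussion). The paper avoids this by using Doob's $L^r$ maximal inequality for the submartingale $|L_s|$ with $1<r<\kappa'/4$, together with the explicit closed form for $\BB E|L_1|^r$ from \cite{paolella}, which is manifestly bounded on $[\kappa',8)$.
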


\begin{proof}
To prove uniform integrability, it suffices to show that the expectation of 
\[\varphi\left( \left| \max_{s \in [0,\tau]} \log|L_s| \vee (-K) \right| \right)\]
is bounded uniformly in $\kappa \in [\kappa',8)$, where $\varphi(x) = e^{q x}$ for some $q>0$. Proving this, in turn, reduces to showing that the expectation of \[\max_{s \in [0,\tau]} |L_s|^q \] is bounded uniformly in $\kappa \in [\kappa',8)$ for some $q>0$.  We will prove such a bound using moment bounds on $L_1$ and $\tau$. 

First, simplifying equation (8.26) on page 292 of~\cite{paolella} for $\alpha = \kappa/4$, $\beta = -1$ and $X = -\cos(\pi \kappa/4) L_1$ yields\footnote{The random variable $X$ has characteristic function given by equation (8.8) on page 281 of~\cite{paolella} with $c=1$; comparing this characteristic function with that of $L_1$ yields the correct scaling $X = -\cos(\pi \kappa/4) L_1$.}
\[
\BB{E}( |L_1|^r ) = \frac{\Gamma\left( 1 - \frac{4r}{\kappa} \right)}{\Gamma(1-r)} \left( - \cos\left( \frac{\pi \kappa}{8}\right) \right)^{-r + 4r/\kappa}
\]
The latter is bounded uniformly in $\kappa \in [\kappa',8)$ for each fixed $r < \kappa'/4$.  
As for $\tau$,~\cite{peskir} derives the following series representation for the density $f_{\tau}$ of $\tau$:
\begin{align*}
f_{\tau}(t) &= \frac{1}{\pi t^{2 - 4/\kappa}} \sum_{n=1}^{\infty} \left[ (-1)^{n-1} \sin(4 \pi/\kappa) \frac{\Gamma(n-4/\kappa)}{\Gamma( n \kappa/4 - 1)} \frac{1}{t^{n-1}}  \right.\\ & \qquad \qquad \qquad\left. - \sin\left( \frac{4 n\pi}{\kappa} \right) \frac{\Gamma(1 + 4n/\kappa)}{n!} \frac{1}{t^{4(n+1)/\kappa - 1}} \right], \qquad \forall t>0 .
\end{align*}
Therefore, for $t \geq 1$ and $\kappa \in [\kappa',8)$,
\begin{align*}
|f_{\tau}(t)| 
&\leq \frac{1}{\pi t^{2-4/\kappa}} \sum_{n=1}^{\infty} \left[ \frac{\Gamma(n-4/\kappa)}{\Gamma(n \kappa/4 - 1)} + \frac{\Gamma(1+4n/\kappa)}{n!} \right]\\
&\leq \frac{1}{\pi t^{2-4/\kappa}} \sum_{n=1}^{\infty} \left[ \frac{(n-1)!}{\left\lfloor n \kappa'/4 - 2 \right\rfloor !} + \frac{\left\lfloor 4n/\kappa'\right\rfloor!)}{n!} \right]  
 \leq \frac{C_{\kappa'}}{t^{2-4/\kappa}} 
\end{align*}
Hence, for any choice of $0< q < \kappa'/4-1$, the quantity $\BB{E}(\tau^{4q/\kappa})$ is bounded uniformly in $\kappa \in [\kappa',8)$.
Thus, fixing $0<q < \kappa'/4-1$ and $1< r < \kappa'/4$, we have
\begin{align*}
\BB{E}\left( \max_{s \in [0,\tau]} |L_s|^q \right)
&=
\BB{E}(\tau^{4q/\kappa}) \BB{E}\left( \max_{s \in [0,1]} |L_s|^q \right) \qquad \text{by scaling (since $\tau,L$ are independent)}
\\&=
\BB{E}(\tau^{4q/\kappa}) \BB{E}\left( \max_{s \in [0,1]} |L_s|^r \right)^{q/r}
\\&=
\BB{E}(\tau^{4q/\kappa}) \left( \frac{r}{1-r} \right)^q \left(\BB{E} \left( |L_1|^r \right) \right)^{q/r}, \qquad \text{by Doob's inequality}
\end{align*}
which is bounded uniformly in $\kappa \in [\kappa',8)$. This completes the proof.
\end{proof}

\section{Open problems}
\label{sec-open}

Consider the following three properties the adjacency graph of bubbles of the SLE$_\kappa$ curves $\eta$:
\begin{enumerate}[label=\textbf{(\Roman*)}]
\item
The graph is a.s.\ connected, i.e., there a.s.\ exists a finite path joining any pair of bubbles.
\label{1}
\item
Almost surely, there exists a path of bubbles from any fixed bubble to $\infty$ which are formed at increasing times (i.e., the path hits the bubbles in the order in which they are formed by the curve and only finitely many bubbles in the path intersect any given compact subset of $\ol{\BB H}$). 
\label{2}
\item
There exists an $(L,R)$-Markovian path started at any stopping time $\zeta$ for $(L,R)$ at which $\eta$ forms a bubble (Definition~\ref{def-markovpath}).
\label{3}
\end{enumerate}
Property \ref{3} is clearly stronger than \ref{2}; the proof of Lemma~\ref{suffcondition} in fact shows that \ref{2} is stronger than \ref{1}.  In Theorem~\ref{path}, we showed that \ref{3} (and hence also \ref{2} and \ref{1}) hold for $\kappa \in (4,\kappa_0]$, and in Theorem~\ref{largekappa} we showed that \ref{3} fails for $\kappa$ sufficiently close to $8$.  

It is of interest to determine the exact set of values of $\kappa \in (4,8)$ for which each of the above three properties hold. 
As mentioned in the introduction, our intuition suggests that it is easier for the adjacency graph to be connected when $\kappa$ is closer to $4$. 
This means that for each of the above three properties, there should exist a critical $\kappa^* \in [\kappa_0 ,8]$ for which the property  holds for $\kappa \in (4,\kappa^*)$ but fails for $\kappa \in (\kappa^*,8)$.

For property~\ref{3}, one might guess that $\kappa^* = 6$, since this is the only ``special'' value of $\kappa$ in the range $(\kappa_0,8)$ and our proof of Theorem~\ref{path}, which gives $\kappa_0 \approx 5.6158$, seems to be reasonably close to optimal. But, we would not be surprised if this does not turn out to be true.
It would be somewhat odd if there exists values of $\kappa$ for which \ref{2} holds but \ref{3} fails, since this would mean that there exist paths to infinity in the adjacency graph but that such paths cannot be found in a Markovian way. Hence $\kappa^* = 6$ might also be a reasonable guess for the critical value for property~\ref{2}.   
For condition~\ref{1}, we are not sure if $\kappa^* = 8$ (i.e., the graph is connected for all $\kappa$) or if $\kappa^* <8$; we would not be surprised either way. 
Our results indicate that it might be difficult to prove connectedness for $\kappa$ close to 8 (if this is indeed true) since one would have to find a way of producing paths which is not Markovian with respect to $(L,R)$.

\bibliography{cibib,slecomponentssupplementarybib}
\bibliographystyle{hmralphaabbrv}
 
\end{document}